\theoremstyle{plain}
    \newtheorem{theorem}[equation]{Theorem}
   \newtheorem*{theorem*}{Theorem} % No number please
      \newtheorem{proposition}[equation]{Proposition}
   \newtheorem{prop}[equation]{Proposition}
   \newtheorem{lemma}[equation]{Lemma}
   \newtheorem{corollary}[equation]{Corollary}
   \newtheorem{conjecture}[equation]{Conjecture}
\theoremstyle{definition}
   \newtheorem{definition}[equation]{Definition}
   \newtheorem{remark}[equation]{Remark}
\numberwithin{equation}{section}
\newcommand{\cchi}{\fontdimen16\textfont2=3pt
\fontdimen17\textfont2=3pt \chi}
\newcommand\qbin[3]{\left[\begin{matrix} #1 \\ #2 \end{matrix} \right]_{#3}}
\newcommand{\ot}{\otimes}
\newcommand{\CC}{{\mathbb {C}}}
\newcommand{\QQ}{{\mathbb {Q}}}
\newcommand{\ZZ}{{\mathbb {Z}}}
\newcommand{\Hilb}{{\operatorname{Hilb}}}
\newcommand{\Frac}{{\operatorname{Frac}}}
\newcommand{\Cat}{{\operatorname{Cat}}}
\newcommand{\supp}{{\operatorname{supp}}}
\newcommand{\del}{\partial}
\newcommand{\xx}{\mathbf{x}}
\newcommand{\om}{\omega}
\newcommand{\SSS}{{S}}
\newcommand{\symm}{\mathfrak{S}}
\begin{document}

\title[Invariant theory for coincidental complex reflection groups]
{Invariant theory for coincidental complex reflection groups}
\date{\today}

\author{Victor\ Reiner}
\author{Anne V.\ Shepler}
\author{Eric Sommers}
\email{reiner@math.umn.edu, ashepler@unt.edu, esommers@math.umass.edu}
%\address{School of Mathematics\\University of Minnesota\\Minneapolis, MN 55455, USA}

\thanks{First author partially supported by NSF grant DMS-1601961; second author partially supported by
Simons Foundation Grant \#429539.}

\keywords{Reflection groups, invariant theory, Weyl groups, Coxeter groups, f-vector, h-vector}

\subjclass{13A50, 05Axx, 20F55}

\begin{abstract}
  V.F.\ Molchanov considered the Hilbert series for the
  space of invariant skew-symmetric tensors and dual tensors with polynomial coefficients
  under the action of a real reflection group,
  and he speculated that it had a certain product formula
  involving the exponents of the group.
 We show that Molchanov's speculation is false in general
but
holds for all {\it coincidental} complex reflection groups
 when appropriately modified using exponents and co-exponents.
These are the irreducible
well-generated (i.e., duality) reflection groups with exponents forming an arithmetic progression
and include many real reflection groups and all non-real Shephard groups, e.g., 
the Shephard-Todd infinite family $G(d,1,n)$.
We highlight consequences for the $q$-Narayana and $q$-Kirkman polynomials,
giving simple product formulas for both, and
give a $q$-analogue of the identity transforming the $h$-vector to the $f$-vector for the coincidental
finite type cluster/Cambrian complexes of Fomin--Zelevinsky and Reading.
We include the determination of the Hilbert series for the non-coincidental irreducible complex reflection groups as well.
\end{abstract}

\maketitle

\vspace{-1ex}
\section{Introduction}
\vspace{-1ex}
Molchanov~\cite{Molchanov} hypothesized a formula for the dimensions 
of invariants of certain finite real reflection groups acting on skew-symmetric tensors and
dual tensors with polynomial coefficients.
His formula gives evidence that Solomon's invariant theory~\cite{Solomon}
for differential forms may have an extension
to mixed derivation differential forms.
We examine these forms not just for real reflection groups, but
for complex reflection groups in general and compute their Hilbert series.
We reformulate Molchanov's hypothesis
in terms of exponents and coexponents of the group.
Although his formula and this reformulation
do not hold for all real reflection groups,
they do hold for the important class of {\em coincidental reflection groups}.

The invariant theory
of reflection groups acting on $V=\CC^n$ displays a wondrous numerology 
controlled by two sequences of positive integers, the {\it exponents} $e_1 \leq e_2  \leq \cdots \leq e_n$ 
and {\it coexponents} $e^*_1 \leq e^*_2  \leq \cdots \leq e^*_n$.
Solomon's Theorem~\cite{Solomon} gives the dimensions of
$W$-invariant polynomial differential forms on $V$ entirely in terms of the exponents
of the reflection group $W$; 
his proof extends to describe likewise
the invariant derivation forms in terms
of the coexponents (see~\cite{OrlikSolomon}).
The {\it degrees} are the integers $d_i=e_i+1$.
Those reflection groups satisfying $e_i+e^*_{n+1-i} = d_n$ are called
{\em duality groups}. These are precisely the {\em well-generated} reflection groups, i.e., those
generated by $n=\dim V$ reflections, and include all Coxeter groups.

An irreducible duality group $W$ is {\em coincidental} if its exponents $(e_1, e_2, \ldots, e_n)$ form an arithmetic sequence $(e_1, e_1+a, e_1+2a,\ldots,e_1+(n-1)a)$ for some positive integer $a$
which we call its {\it exponent gap}.  The coincidental reflection groups are the Coxeter groups of types $A_n$, $B_n/C_n$, $I_2(m)$, $H_3$, the monomial groups $G(d,1,n)$, all irreducible
duality groups in rank $2$, and the groups 
$G_{25}$, $G_{26}$, and $G_{32}$
in the Shephard-Todd classification~\cite{ShephardTodd}.
They include all non-Coxeter {\it Shephard groups}, that is, the symmetry groups of regular complex polytopes.

We extend Solomon's description for the Hilbert series 
for invariant differential forms, $(S(V^*)\ot \wedge V^*)^W$,
to invariant {\it mixed derivation differential forms},
$(S(V^*)\ot \wedge V^*\ot \wedge V)^W$.  
We use the {\it elementary symmetric functions}
$
\sigma_r(x_1,\ldots,x_n):=\sum_{1\leq i_1 < \cdots < i_r \leq n} x_{i_1} x_{i_2} \cdots x_{i_r} 
$
with the convention that $\sigma_0(x_1,\ldots,x_n)\equiv 1$.

\begin{theorem}
\label{maintheorem}
For any coincidental complex reflection group $W$ acting on $V=\CC^n$, 
$$
\Hilb\left((S(V^*)\otimes \wedge V^* \otimes \wedge V)^W,q,t ,s \right) 
\ =\  \sum_{r=0}^n \ s^r\  \sigma_r(q^{e_1^*}, \ldots, q^{e_n^*})
  \  \frac{
  \prod_{i=1}^{r} (1+q^{-e^*_i}t)\
   \prod_{i=1}^{n-r} (1+q^{e_i}t) }
      { \prod_{i=1}^{n} (1- q^{d_i})  }
$$
where the coefficient of $q^i t^k s^r$ in the Hilbert series  
is the dimension of $(S^i(V^*) \otimes \wedge^k V^* \otimes \wedge^r V)^W$.
\end{theorem}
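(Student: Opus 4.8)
The plan is to first reduce the three-variable statement to a purely representation-theoretic computation on the coinvariant algebra, and then to bring in the arithmetic progression of the exponents to force a product formula. First I would use Chevalley's theorem: $S(V^*)$ is free over $S(V^*)^W$, and as graded $W$-modules $S(V^*)\cong S(V^*)^W\ot\Wcoinv$ with $W$ acting trivially on the first tensorand. Applying this to the finite-dimensional module $M=\wedge V^*\ot\wedge V$ gives $(S(V^*)\ot M)^W\cong S(V^*)^W\ot(\Wcoinv\ot M)^W$, so the invariants form a free $S(V^*)^W$-module and
$$\Hilb\bigl((S(V^*)\ot\wedge V^*\ot\wedge V)^W,q,t,s\bigr)=\frac{\Hilb\bigl((\Wcoinv\ot\wedge V^*\ot\wedge V)^W,q,t,s\bigr)}{\prod_{i=1}^n(1-q^{d_i})}.$$
It thus suffices to identify the numerator $P(q,t,s)$ with the stated polynomial. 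Splitting $\wedge V=\bigoplus_r\wedge^r V$ and extracting the coefficient of $s^r$, the theorem reduces to proving, for each $r$,
$$P_r(q,t):=\Hilb\bigl((\Wcoinv\ot\wedge V^*\ot\wedge^r V)^W,q,t\bigr)=\sigma_r(q^{e_1^*},\ldots,q^{e_n^*})\prod_{i=1}^r(1+q^{-e_i^*}t)\prod_{i=1}^{n-r}(1+q^{e_i}t).$$
As consistency checks, $r=0$ is exactly the coinvariant form of Solomon's theorem, $P_0=\prod_i(1+q^{e_i}t)$, and specializing $t=0$ recovers the coexponent (Orlik--Solomon) count $\sigma_r(q^{e_1^*},\ldots,q^{e_n^*})$ of $(\Wcoinv\ot\wedge^r V)^W$; so the content is entirely in the coupling between the $\wedge V^*$ and $\wedge^r V$ factors.

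Next I would reinterpret $P_r$ as a graded multiplicity. Since $(\wedge^r V)^*\cong\wedge^r V^*$, taking invariants of the $\wedge^r V$-tensorand shows that $P_r(q,t)$ is the bigraded multiplicity of the $W$-module $\wedge^r V^*$ inside the $(q,t)$-graded module $\Wcoinv\ot\wedge V^*$. Equivalently, via Molien's formula together with Solomon's identity, $P_r$ is the \emph{twisted} Solomon sum
$$P_r(q,t)=\frac{\prod_{i=1}^n(1-q^{d_i})}{|W|}\sum_{w\in W}\Tr(w\mid\wedge^r V)\,\frac{\det(1+t\,w|_{V^*})}{\det(1-q\,w|_{V^*})}.$$
The task is then to evaluate this sum in closed form, which is exactly the place where the structure of $\Wcoinv\ot\wedge V^*$ as a graded $W$-module, and not merely Solomon's ungraded count, must be used.

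The hard part — and the only place the coincidental hypothesis can enter — is carrying out this evaluation so that it factors. The appearance of the \emph{negative} exponents $q^{-e_i^*}$ is the key clue: it signals contraction (interior product) against the basic derivations $\theta_1,\ldots,\theta_n\in(S(V^*)\ot V)^W$, whose $q$-degrees are the coexponents $e_1^*,\ldots,e_n^*$. Concretely, I would build an $S(V^*)^W$-basis of $(S(V^*)\ot\wedge V^*\ot\wedge^r V)^W$ from the Solomon forms $df_{i_1}\wedge\cdots\wedge df_{i_k}$ on the $\wedge V^*$ side and wedges $\theta_{j_1}\wedge\cdots\wedge\theta_{j_r}$ on the $\wedge^r V$ side, and then straighten their products using the pairing $\langle df_i,\theta_j\rangle=\theta_j(f_i)$, whose $q$-degree $e_i+e_j^*$ is pinned down by the duality relation $e_i+e^*_{n+1-i}=d_n$. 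Here the arithmetic progression $e_i=e_1+(i-1)a$, equivalently $e_i^*=1+(i-1)a$ and $e_i=e_i^*+(e_1-1)$, is precisely what makes the straightening \emph{triangular} with the correct degree bookkeeping, so that incorporating $\wedge^r V$ converts exactly $r$ of the factors $(1+q^{e_i}t)$ into the contracted factors $(1+q^{-e_i^*}t)$ and leaves the complementary factors $\prod_{i=1}^{n-r}(1+q^{e_i}t)$. I expect this straightening/degree-matching step to be the main obstacle, and it is where non-coincidental groups must fail, consistent with the paper's assertion that the formula is false in general. As a check and a fallback for the exceptional and rank-two members of the coincidental list, I would verify the identity directly for the infinite family $G(d,1,n)$ by the explicit monomial-and-eigenvalue combinatorics available for monomial groups, and then confirm the remaining finitely many cases $I_2(m)$, $H_3$, $G_{25}$, $G_{26}$, $G_{32}$ by computing the twisted Solomon sum on the classification.
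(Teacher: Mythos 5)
Your reduction to the coinvariant-algebra multiplicity $P_r(q,t)$ and the twisted Molien/Solomon sum is sound and matches the paper's setup, and your fallback of verifying $G(d,1,n)$ explicitly is in fact how the paper handles that family (via the hook-content formulas of Kirillov--Pak/Molchanov and Koike's generalization). But the central step of your plan has a genuine gap. You propose a basis built from products of the Solomon generators $df_{i_1}\wedge\cdots\wedge df_{i_k}$ with wedges $\theta_{j_1}\wedge\cdots\wedge\theta_{j_r}$, straightened via the pairing $\langle df_i,\theta_j\rangle$. This cannot work as stated: $(S(V^*)\otimes\wedge V^*\otimes\wedge^r V)^W$ is \emph{not} free over the full exterior algebra $\wedge_{S(V^*)^W}\{df_1,\ldots,df_n\}$, and the elements $df_L\cdot\theta_J$ all have nonnegative $S$-degree contributions, so no amount of triangular change of basis among them can produce the factors $(1+q^{-e_i^*}t)$. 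The paper's precise device is different: one lets each basic derivation $\theta_i$ act as a differential operator $\tilde{\theta}_i$ (partial differentiation of the polynomial coefficients, which \emph{lowers} degree by $e_i^*$ while raising the $\wedge V^*$-degree), and the conjectured basis is $df_L\cdot\tilde{\theta}_I(\theta_J)$ with the crucial index restrictions $I\subset[r]$ and $L\subset[n-r]$ --- i.e.\ freeness only over the subalgebra omitting $df_{n-r+1},\ldots,df_n$. Moreover, even with this correct formulation, there is no uniform "straightening" argument in which coincidentality visibly enters: the paper verifies the degree bookkeeping for \emph{all} duality groups via the Gutkin--Opdam lemma (so that only linear independence over $\mathrm{Frac}(S(V^*))$ remains to be checked), and then establishes that independence case by case --- uniformly in rank $2$ by exhibiting nonvanishing determinants, and by computer for $H_3$, $G_{25}$, $G_{26}$, $G_{32}$. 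The role of coincidentality is detected only through the classification, not through a structural triangularity argument.

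A second, smaller gap is your case list. The coincidental groups of rank $2$ are \emph{all} irreducible rank-$2$ duality groups, namely $I_2(m)$ together with the twelve non-real groups $G_4$, $G_5$, $G_6$, $G_8$, $G_9$, $G_{10}$, $G_{14}$, $G_{16}$, $G_{17}$, $G_{18}$, $G_{20}$, $G_{21}$; these are neither finitely many cases nor covered by $G(d,1,n)$, so "confirm the remaining finitely many cases $I_2(m)$, $H_3$, $G_{25}$, $G_{26}$, $G_{32}$" both miscounts and omits part of the classification. You would need either a uniform rank-$2$ argument (as in the paper) or an honest enumeration including the exceptional rank-$2$ duality groups.
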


Here we use the standard grading
on $S(V^*)=\oplus_i\, S^i(V^*)$ by polynomial degree.
We may reformulate Theorem~\ref{maintheorem} compactly
using the {\it $q$-Pochhammer} notation defined by
$$
(z;q)_k:=(1-z)(1-zq)\cdots(1-zq^{k-1})
$$
and the {\it $q$-binomial coefficient} defined by
\begin{equation}
\label{q-binomial-definition}
\qbin{n}{r}{q}:=\frac{ (q;q)_n}{(q;q)_r \, (q;q)_{n-r}}\, .
\end{equation}
For a coincidental reflection group $W$, since the coexponents
$(e_1^*, e_2^*\ldots, e_n^*)=(1, 1+a, 1+2a, \ldots, 1+(n-1)a)$,
we can use the well-known  identity~\cite[Chap.~I \S 2, Ex.~3]{Macdonald}
\begin{equation}
\label{principal-specialization-of-elementary}
\sigma_r(1,q,q^2,\ldots,q^{n-1})=q^{\binom{r}{2}}\qbin{n}{r}{q}
\end{equation}
to rewrite the $r^{\text{th}}$ elementary symmetric function 
appearing in the theorem as
$$
\sigma_r( q^{e^*_1}, \ldots, q^{e^*_n} ) 
= \sigma_r( q^1,q^{1+a},q^{1+2a}, \ldots, q^{1+(n-1)a} )
= q^r \cdot \sigma_r(q^a,q^{2a},\ldots,q^{(n-1)a})
=q^{r+a\binom{r}{2}} \qbin{n}{r}{q^a}\, .
$$
We focus on the summand $\wedge^r V$ in $\wedge V=\oplus_{r=0}^n \wedge^r V$
and give the following equivalent version of Theorem~\ref{maintheorem}.

\vspace{2ex}

\noindent
{\bf Theorem~\ref{maintheorem}${}^\prime$}.
{\it
For a coincidental complex reflection group $W$ with smallest exponent $e_1$, exponent gap $a$,
$$
\Hilb\left((S(V^*)\otimes \wedge V^* \otimes \wedge^r V)^W,q,t \right) 
= q^{r+a\binom{r}{2}} 
   \qbin{n}{r}{q^a}  
    \frac{(-tq^{e_1};q^a)_{n-r}  (-tq^{-1};q^{-a})_{r}}
               {(q^{e_1+1};q^a)_n}\, 
           \quad\text{for } r=0, \ldots, n.
$$
}

In fact, we compile the data on the Hilbert series of $(S(V^*)\ot \wedge V^*\ot \wedge V)^W$
for {\it all} irreducible complex reflection groups $W$,
not just the coincidental groups---see Section~\ref{G(d,1,n)-section} for Shephard and Todd's infinite family $G(de,e,n)$ of monomial
groups and Section~\ref{data-section} for the exceptional groups.

\subsection*{The $q$-analogues of $f$-vectors and $h$-vectors}
Theorem~\ref{maintheorem}${}^\prime$ suggests
$q$-analogues of the {\it $f$-vector} and the {\it $h$-vector}
appearing in the algebraic combinatorics of certain simple polytopes
and simplicial spheres, as we will explain in Section~\ref{parking-section}.
These vectors record the number of faces of each dimension
and the Hilbert-Poincar\'e polynomial of the associated Stanley-Reisner ring.
For a coincidental reflection group $W$, we rename the right side of Theorem~\ref{maintheorem}${}^\prime$
as follows:
%$$
%\begin{aligned}
%f_r(W;q,t)\ &:=\
%q^{r+a\binom{r}{2}} 
%  \qbin{n}{r}{q^a}  
%  \frac{(-tq^{e_1};q^a)_{n-r}  (-tq^{-1};q^{-a})_{r}}
%               {(q^{e_1+1};q^a)_n}\, 
 %\quad\text{and}\quad\\
 %h_r(W;q,t) &:= (-tq^{-ar-1})^{n-r} \qbin{n}{r}{q^a}
 %                   \frac{ (-tq^{-1};q^{-a})_r } { (q^{e_1+1};q^a)_r }\, .
%\end{aligned}
%$$ 
$$
f_r(W;q,t)\ :=\
q^{r+a\binom{r}{2}} 
   \qbin{n}{r}{q^a}  
    \frac{(-tq^{e_1};q^a)_{n-r}  (-tq^{-1};q^{-a})_{r}}
               {(q^{e_1+1};q^a)_n}\,.
 $$
 We wish to relate $f_r(W;q,t)$ to a second product:
 $$
 h_r(W;q,t) := (-tq^{-ar-1})^{n-r} \qbin{n}{r}{q^a}
                    \frac{ (-tq^{-1};q^{-a})_r } { (q^{e_1+1};q^a)_r }\, .
$$ 
We will see in Section~\ref{parking-section} that the specializations
$$f_r:=\left[ f_r(W;q,q^{h+1}) \right]_{q=1}
\quad\text{for Coxeter number}\quad h:=e_n+1
$$
give the number of faces of each dimension in the {\it finite type cluster
fans} of Fomin and Zelevinsky~\cite{FominZelevinsky} when $W$ is a Weyl group, 
or the {\it Cambrian fans} of Reading~\cite{Reading} when $W$ is a real reflection group. 
For simplicial fans or polytopes,
a standard re-encoding gives the {$f$-vector} entries $f_r$
in terms of the {$h$-vector} entries $h_r$:
\begin{equation}
\label{usual-h-to-f-transformation}
\sum_{r=0}^n s^r f_r  \
=\ \sum_{r=0}^n (1+s)^r \cdot h_r\, .
\end{equation}
In Section~\ref{h-to-f-section}, we use Theorem~\ref{maintheorem}${}^\prime$ 
to prove a $q$-analogue (and even a $(q,t)$-analogue) of this standard encoding:
\begin{theorem}
\label{h-to-f-theorem}
For any coincidental reflection group $W$ with exponent gap $a$, 
$$
\sum_{r=0}^n s^r f_r(W;q,t)  \
=\ \sum_{r=0}^n (-sq;q^a)_r \cdot h_r(W;q,t).
$$
\end{theorem}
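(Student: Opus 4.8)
The plan is to reduce the theorem to a coefficient-by-coefficient identity and then to a single terminating $q$-hypergeometric summation. Since the polynomials $(-sq;q^a)_r$ for $r=0,\dots,n$ are unitriangular in $s$, the asserted equality of polynomials in $s$ is equivalent to matching the coefficient of each power $s^j$. First I would expand the right-hand side by the terminating $q$-binomial theorem $(-sq;q^a)_r = \sum_{j=0}^r \qbin{r}{j}{q^a} q^{a\binom{j}{2}+j}\, s^j$, which gives
\[
\sum_{r=0}^n (-sq;q^a)_r\, h_r(W;q,t) \;=\; \sum_{j=0}^n s^j\, q^{a\binom{j}{2}+j} \sum_{r=j}^n \qbin{r}{j}{q^a}\, h_r(W;q,t).
\]
Comparing with $\sum_j s^j f_j(W;q,t)$, the theorem becomes equivalent to the $q$-analogue of the classical inversion $f_j=\sum_r \binom{r}{j}h_r$, namely $f_j(W;q,t) = q^{a\binom{j}{2}+j}\sum_{r=j}^n \qbin{r}{j}{q^a}\, h_r(W;q,t)$ for every $j$.

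Next I would substitute the product formulas for $f_j$ and $h_r$. Using the subset identity $\qbin{r}{j}{q^a}\qbin{n}{r}{q^a}=\qbin{n}{j}{q^a}\qbin{n-j}{r-j}{q^a}$ I can pull $\qbin{n}{j}{q^a}$ outside the sum, after which the common factors $q^{a\binom{j}{2}+j}$, $\qbin{n}{j}{q^a}$, and $(-tq^{-1};q^{-a})_j$ cancel from both sides. Reindexing by $k=r-j$, clearing the denominator $(q^{e_1+1};q^a)_n$, and then absorbing the shift through $t\mapsto tq^{-aj}$, $q^{e_1}\mapsto q^{e_1+aj}$ (which removes every occurrence of $j$) collapses the whole family to a single polynomial identity, valid for an indeterminate $w$,
\[
(-tw;q^a)_m \;=\; \sum_{k=0}^m \qbin{m}{k}{q^a}\,(qwq^{ak};q^a)_{m-k}\,(-tq^{-1-ak})^{m-k}\,(-tq^{-1};q^{-a})_k ,
\]
with $m=n-j$, the original case being the specialization $w=q^{e_1}$; the cases $m=0,1$ check directly.

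Finally I would prove this one-variable identity by recognizing it as a terminating $q$-Chu--Vandermonde summation. The two key simplifications are the splitting $(qwq^{ak};q^a)_{m-k}=(qw;q^a)_m/(qw;q^a)_k$, which isolates the $w$-dependence, and the reflection formula $(z;q^{-a})_k=(-z)^k q^{-a\binom{k}{2}}(z^{-1};q^a)_k$ applied to $(-tq^{-1};q^{-a})_k$, which converts the base-$q^{-a}$ Pochhammer symbol into base $q^a$. After these substitutions the scalar prefactors collect to $(qw;q^a)_m\,(tq^{-1})^m$ times a ${}_2\phi_1$ whose numerator parameter $q^{-am}$ forces termination, and $q$-Chu--Vandermonde evaluates the sum to a ratio of degree-$m$ Pochhammer symbols equal, up to exactly that prefactor, to $(-tw;q^a)_m$. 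I expect the main obstacle to be precisely this last step: keeping the two bases $q^a$ and $q^{-a}$ together with the floating powers of $q$ aligned so that everything lands in a single base and the standard summation applies. The reindexing that eliminates $j$ in the middle step must also be verified with care, since it is what allows one one-variable identity to handle all coefficients $s^j$ simultaneously.
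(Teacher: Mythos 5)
Your proposal is correct, and it takes a genuinely different route from the paper. The paper works with the full generating function in $s$: it first rewrites $\sum_r s^r f_r(W;q,t)$ as a single ${}_2\phi_1$ (Proposition~\ref{2-phi-1-prop}(b)) and then applies a terminating form of Jackson's ${}_2\phi_1\to{}_3\phi_2$ transformation \cite[III.7]{GasperRahman}; the factor $(-sq;q^a)_r$ emerges directly from the third numerator parameter of the resulting ${}_3\phi_2$, and the remaining factor is matched against $h_r(W;q,t)$. You instead expand $(-sq;q^a)_r$ by the terminating $q$-binomial theorem, equate coefficients of $s^j$, and reduce to the $q$-analogue of the classical inversion $f_j=\sum_r\binom{r}{j}h_r$; after the subset identity $\qbin{r}{j}{q^a}\qbin{n}{r}{q^a}=\qbin{n}{j}{q^a}\qbin{n-j}{r-j}{q^a}$, the splitting $(-tq^{-1};q^{-a})_r=(-tq^{-1};q^{-a})_j\,(-tq^{-1-aj};q^{-a})_{r-j}$, and the reindexing $k=r-j$, everything collapses to your one-variable identity, which I have checked does evaluate by $q$-Chu--Vandermonde exactly as you describe (the ${}_2\phi_1[q^{-am},-qt^{-1};qw\,|\,q^a;q^a]$ that appears sums to $(-t^{-1}q)^m(-tw;q^a)_m/(qw;q^a)_m$, cancelling the prefactor). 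Your route buys a more elementary proof---only the $q$-binomial theorem and the most basic terminating summation are needed, rather than Jackson's transformation---and it makes the combinatorial content (the $q$-deformation of $f_j=\sum_r\binom{r}{j}h_r$) explicit, which the paper's generating-function argument hides. The paper's route is shorter on the page and reuses Proposition~\ref{2-phi-1-prop}. One small imprecision to fix in your write-up: for the coefficient of $s^j$ the relevant specialization is $w=q^{e_1+aj}$ with $m=n-j$, not $w=q^{e_1}$; since you prove the identity for indeterminate $w$ this is harmless, but the sentence as written only names the $j=0$ case.
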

In Section~\ref{parking-section}, we explain why specializing $t$ in $f_r(W;q,t)$ and $h_r(W;q,t)$ to certain powers of $q$ 
give the {\it $q$-Catalan numbers}, {\it $q$-Kirkman numbers}, and {\it $q$-Narayana numbers} arising previously in~\cite{ArmstrongReinerRhoades, ReinerSommers, Sommers} and how these specialize further to the aforementioned $f$-vector and $h$-vector entries.

%%%%%%%%%%%%%%%%%%%%%%%%%%
\subsection*{Outline}
After recalling the numerology of reflection groups in Section~\ref{background},
we show Theorem~\ref{maintheorem} directly 
for the infinite family $G(d,1,n)$ and the Weyl groups of type $A$ in Section~\ref{G(d,1,n)-section}
using results of Kirillov and Pak~\cite{KirillovPak} and Koike \cite{Koike}.
We also give the Hilbert series explicitly for the groups $G(de, e,n)$
in Section~\ref{G(d,1,n)-section}.
We conjecture an explicit basis for $(S(V^*)\otimes \wedge V^* \otimes \wedge V)^W$
in Section~\ref{conjecture} constructed from invariant differential operators
for coincidental reflection groups; invariance 
of the alleged basis elements in Conjecture~\ref{structural-Molchanov}
is checked in Section~\ref{invariance-of-operators-section}.
In Section~\ref{Gutkin-Opdam-section}, we use the Gutkin-Opdam Lemma to predict the
sum of degrees of these alleged basis elements.
Section~\ref{mainproof} then outlines the proof  of Theorem~\ref{maintheorem}
and compares it to Molchanov's original hypothesis.
It also explains how we used {\tt Mathematica} to verify 
Conjecture~\ref{structural-Molchanov}  for the
real reflection group $H_3$ and the Shephard groups $G_{25}$, $G_{26}$, $G_{32}$.
We verify Conjecture~\ref{structural-Molchanov} 
for rank~$2$ groups in Section~\ref{two-dimensional-section}.
In Section~\ref{h-to-f-section}, we use Theorem~\ref{maintheorem}
to define the above $q$-analogues of the $f$-vector and $h$-vector, and
we prove Theorem~\ref{h-to-f-theorem} giving a $(q,t)$-analogue of
the transformation \eqref{usual-h-to-f-transformation} that converts between $f$ and $h$.
We explain how specializations of these $q$-analogues give
known product formulas for $q$-Catalan, 
$q$-Kirkman, and $q$-Narayana numbers
in Section~\ref{parking-section}
and explain connections to graded {\it parking spaces}.
Lastly, in Section~\ref{data-section}, we compile the Hilbert series of
$(S(V^*)\ot \wedge V^* \wedge V)^W$
%as a module over the invariant ring $S(V^*)^W$ 
for all of the exceptional irreducible complex reflection groups.

%%%%%%%%%%%%%%%%%%%%%%%%%%%%%%%%%%%%%%%%%%%%%%%%%%%
%%%%%%%%%%%%%%%%%%%%%%%%%%%%%%%%%%%%%%%%%%%%%%%%%%%
%%%%%%%%%%%%%%%%%%%%%%%%%%%%%%%%%%%%%%%%%%%%%%%%%%%
\section{Invariant theory of reflection groups}
\label{background}
We begin by recounting some appearances of the {\em exponents} and 
{\em coexponents} in the invariant theory of reflection groups.  
Recall that a  {\em reflection} on $V=\CC^n$ is a linear transformation
whose fixed point space is a hyperplane and a  
{\it reflection group} $W$ is a subgroup of $\text{GL}(V)$ 
generated by reflections.  We assume all reflection groups are finite.
Consequently,
we may take an inner product on $V$ with respect to which $W$ acts by isometries
and fix a basis of $V$
so that the matrices giving the action are unitary.
We write $\det=\det_V$ throughout for the determinant of elements of $W$ acting
on $V$.
A reflection group is a {\em (finite) Coxeter group} or {\em real reflection group}
if it is generated by reflections on $\mathbb{R}^n$, which then act on $\CC^n$ by extension of scalars.

A large body of literature describes the invariant theory
of reflection groups acting on $V=\CC^n$ in terms of
two sequences of positive integers, the {\it exponents} $e_i$ 
and {\it coexponents} $e^*_i$ of $W$,
\begin{equation}
\label{exponent-coexponent-indexing}
\begin{aligned}
e_1 \leq e_2  \leq \cdots \leq e_n
\quad\text{and}\quad
e^*_1 \leq e^*_2  \leq \cdots \leq e^*_n,
\end{aligned}
\end{equation}
defined as follows.  The dual action of $W$ on $V^*$ induces an action
on the {\it symmetric algebra}
$$
S(V^*) \cong \CC[x_1,\ldots,x_n]
$$
where $x_1,\ldots,x_n$ is the $\CC$-basis for $V^*$
dual to a $\CC$-basis $y_1,\ldots,y_n$ of $V$; the group $W$ acts on $S(V^*)$
via invertible linear substitutions of the variables $x_1,\ldots,x_n$.
A theorem of Shephard and Todd~\cite{ShephardTodd} and of Chevalley~\cite{Chevalley}
asserts that the $W$-invariant polynomials form a polynomial subalgebra:
$$
S(V^*)^W=\CC[f_1,\ldots,f_n]
$$ 
for some homogeneous $f_i$ in $S(V^*)$ called {\em basic invariants}.  Their
polynomial degrees $d_1 \leq \cdots \leq d_n$  are independent of the choice
of the $f_i$.
The exponents of $W$ are then just the integers $e_i:=d_i-1$.  

More generally, we may define $U$-exponents for any $W$-representation $U$
by regarding the $W$-fixed space $\left(S(V^*) \otimes U\right)^W$ as a
module over $S(V^*)^W$
via multiplication into the left tensor factor.
This module
is free of rank $\dim U$
by Chevalley's Theorem~\cite{Chevalley} (see ~\cite[Prop.~4.3.3, eqn.~(4.6)]{Broue})
or by a result of Hochster and Eagon~\cite{HochsterEagon}, and
the {\it $U$-exponents}
$
e_1(U) \leq \cdots \leq e_{\dim U}(U)
$
are the degrees of a homogeneous basis.
Here, $S(V^*) \otimes U$ inherits the grading on $S(V^*)$ by polynomial degree.
Note that these $U$-exponents are the degrees in which the representation $U^*$
appears in the coinvariant algebra $S(V^*)/S(V^*)_+^W$.

As a special case, the $V$-exponents are the coexponents $e_i^*=e_i(V)$.
In other words, $(S(V^*) \otimes V)^W$ is a free module over $S(V^*)^W$ and one may
choose a basis $\{\theta_1,\ldots,\theta_n\}$, called a set of {\it basic derivations}, 
with
\begin{equation}
\label{basic-derivations}
\theta_i =\sum_{j=1}^n \theta_i^{\, j} \otimes y_j
\quad\text{for homogeneous } \theta_i^{\, j} 
\text{ in } S(V^*) \text{ of degree } e^*_i
\, .
\end{equation}
When $W$ is irreducible, there is a unique smallest coexponent $e_1^*=1$ corresponding to the {\it Euler derivation},
$
\theta_1=\theta_E:=\sum_{i=1}^n x_i \otimes y_i,
$
which is always $W$-invariant
(see~\cite{ReinerShepler}, for example).

Solomon~\cite{Solomon} considered the space of differential forms 
and showed that the {\it exterior algebra} $\wedge V^*$ tensored
with $S(V^*)$ has $W$-fixed space
which is an exterior algebra over $S(V^*)^W$ on exterior generators
$\{df_1,\ldots,df_n\}$:
$$
(S(V^*)\ot\wedge V^*)^W = \bigwedge_{S(V^*)^W}\{df_1, \ldots, df_n\}
\quad\text{ where }\quad
df:=\sum_{i=1}^n \tfrac{\partial f}{\partial x_i} \otimes x_i.
$$
From this one can deduce that the exponents are alternatively defined as the $V^*$-exponents via $e_i:=e_i(V^*)$. Orlik and Solomon~\cite[Thm.~3.1]{OrlikSolomon}
generalized Solomon's Theorem,
implying as a special case that
the exterior algebra $\wedge V$ tensored
with $S(V^*)$ has $W$-fixed space 
which is also an exterior algebra over base ring $S(V^*)^W$, this time
with exterior generators given by the basic derivations
$\{\theta_1,\ldots,\theta_n\}$ in $(S(V^*) \otimes V)^W$:
\begin{equation}
\label{OrlikSolomon}
(S(V^*)\ot\wedge V)^W = \bigwedge_{S(V^*)^W}\{\theta_1, \ldots, \theta_n\}\, .
\end{equation}

\subsection*{Hilbert Series}
The above structural results imply combinatorial descriptions for various {\it Hilbert series}
$\Hilb(M,q):=\sum_{d \geq 0} \dim M_d \cdot q^d$ for graded (or doubly or triply graded)
vector spaces $M=\bigoplus_{d \geq 0} M_d$.  In each case, one compares 
the Hilbert series implied by variants on
Molien's Theorem (see~\cite{Stanley1979}
or~\cite[Lemma~3.2.8]{Broue})
to the expression implied by the above results on
the structure of the various rings and modules.
Appreviating $S=S(V^*)$, we observe the following.  
\begin{itemize}
\item The Shephard-Todd-Chevalley Theorem on $S^W$ implies that
\begin{equation}
\label{Shephard-Todd-Chevalley-consequence}
\tfrac{1}{|W|} 
\sum_{w \in W} \frac{1}{\det(1-qw)}
=\Hilb(S^W,q)
=   \frac{ 1 }{\prod_{i=1}^{n} (1- q^{d_i})  }\, .
\end{equation}
\item
The definition of coexponents in terms of $(S \otimes V)^W$ implies that
\begin{equation}
\label{coexponent-definition-consequence}
\tfrac{1}{|W|} 
\sum_{w \in W} \frac{\cchi_V(w^{-1}) }{\det(1-qw)}
=\Hilb((S\otimes V)^W,q)
=\Big( \sum_{i=1}^n q^{e^*_i} \Big) \frac{1}{ \prod_{i=1}^{n} (1- q^{d_i})  }\, .
\end{equation}
\item
  Solomon's Theorem describing $(S\otimes \wedge V^*)^W$
  implies this generalization of \eqref{Shephard-Todd-Chevalley-consequence}:
\begin{equation}
\label{Solomon-consequence}
\tfrac{1}{|W|} 
\sum_{w \in W} \frac{\det(1+tw)}{\det(1-qw)}
=\Hilb((S\otimes \wedge V^*)^W,q,t)
=\prod_{i=1}^{n}\frac{1+q^{e_i}t}{1- q^{d_i}}\, ,
\end{equation}
where the coefficient of $q^i t^k$ in the
Hilbert series is the dimension of $(S^i \otimes \wedge^k V^*)^W$.
\item
  The Orlik-Solomon Theorem describing $(S\otimes \wedge V)^W$
  implies this different generalization of \eqref{Shephard-Todd-Chevalley-consequence}:
\begin{equation}
\label{Orlik-Solomon-consequence}
\tfrac{1}{|W|} 
\sum_{w \in W} \frac{\det(1+sw^{-1})}{\det(1-qw)}
=\Hilb((S\otimes \wedge V)^W,q,t)
=\prod_{i=1}^{n} \frac{1+q^{e_i^*}s}{1- q^{d_i}}\, ,
\end{equation}
where the coefficient of $q^i s^r$ in the
Hilbert series is the dimension of $(S^i \otimes \wedge^r V^*)^W$.
\end{itemize}

\subsection*{Duality groups}
More recently, the first two authors~\cite{ReinerShepler} proved
a structural statement in the invariant theory
of  {\it duality groups}, that is,
reflection groups satisfying
the {\it exponent-coexponent duality}
$$
e_i + e^*_{n+1-i}=h:=\max\{d_i\}.
$$
For any reflection group $W$, the space $(S(V^*) \otimes \wedge V^* \otimes V)^W$ is
a module over the $S(V^*)^W$-exterior algebra 
$$
(S(V^*) \otimes \wedge V^*)^W=\bigwedge_{S(V^*)^W}\{df_1,\ldots,df_n\}
$$
via multiplication in the first two tensor factors.  In general, it is {\it not free} as a  module 
over this exterior algebra.  But when $W$ is a duality group, 
$(S(V^*) \otimes \wedge V^* \otimes V)^W$ is free as a module over
the {\em subalgebra}
$$
\bigwedge_{S(V^*)^W}\{df_1,\ldots,df_{n-1}\}
$$
which omits the last exterior generator $df_n$.  This similarly implies a combinatorial
Hilbert series:
\begin{theorem}[\cite{ReinerShepler}]
For $W$ a duality group,
\begin{equation}
\label{Reiner-Shepler}
\begin{aligned}
\tfrac{1}{|W|} 
\sum_{w \in W} \cchi_V(w^{-1}) \frac{\det(1+tw)}{\det(1-qw)}
&\ =\ \Hilb\big((S(V^*)\otimes \wedge V^* \otimes V)^W,q,t\big)
\\
&\ =\    \Big( \sum_{i=1}^n q^{e^*_i}\Big) \
\frac{ \rule[-1ex]{0ex}{2ex}
(1+q^{-1}t)\ \prod_{i=1}^{n-1} (1+q^{e_i}t) }
{\rule{0ex}{2ex}
 \prod_{i=1}^{n} (1- q^{d_i})  }\, .
\end{aligned}
\end{equation}
\end{theorem}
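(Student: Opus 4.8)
The statement bundles two equalities: that the averaged graded character (a Molien-type sum) computes the Hilbert series, and that this Hilbert series has the displayed product form. I would prove these in turn, writing $M:=(S(V^*)\otimes\wedge V^*\otimes V)^W$.

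\emph{First equality (Molien).} This is the bigraded Molien theorem for the $W$-module $S(V^*)\otimes \wedge V^*\otimes V$, and is routine. Decomposing the graded trace of $w$ by tensor factor, $S(V^*)$ contributes $1/\det_{V^*}(1-qw)$, the exterior factor $\wedge V^*$ contributes $\det_{V^*}(1+tw)$, and $V$ contributes $\chi_V(w)$; averaging over $W$ gives
\[
\Hilb(M,q,t)=\tfrac{1}{|W|}\sum_{w\in W}\frac{\chi_V(w)\,\det_{V^*}(1+tw)}{\det_{V^*}(1-qw)}.
\]
Working in a unitary basis, the eigenvalues of $w$ on $V^*$ are the inverses of those on $V$, so $\det_{V^*}(1+tw)=\det_V(1+tw^{-1})$ and $\det_{V^*}(1-qw)=\det_V(1-qw^{-1})$; re-indexing the sum by $w\mapsto w^{-1}$ then turns this into the left-hand side $\tfrac{1}{|W|}\sum_w \chi_V(w^{-1})\det(1+tw)/\det(1-qw)$, exactly as in the derivations of \eqref{Shephard-Todd-Chevalley-consequence} and \eqref{coexponent-definition-consequence}.

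\emph{Second equality (structure and bookkeeping).} Here I would use the freeness statement recalled just above the theorem: for a duality group, $M$ is free as a module over $A:=\bigwedge_{S^W}\{df_1,\dots,df_{n-1}\}$, the exterior subalgebra omitting $df_n$. Since $A$ is a free exterior algebra over $S^W=\CC[f_1,\dots,f_n]$ on the $n-1$ generators $df_i$ of bidegree $q^{e_i}t$, its Hilbert series is $\Hilb(A)=\prod_{i=1}^{n-1}(1+q^{e_i}t)\big/\prod_{i=1}^n(1-q^{d_i})$. It then suffices to exhibit a homogeneous $A$-basis of $M$ and read off the generator polynomial $P(q,t)$ in $\Hilb(M)=P\cdot\Hilb(A)$. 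I would propose the $2n$ elements $\{\theta_1,\dots,\theta_n\}\cup\{d\theta_1,\dots,d\theta_n\}$, where the $\theta_i$ are the basic derivations of \eqref{basic-derivations} (bidegree $q^{e_i^*}$, $t$-degree $0$) and $d\theta_i:=\sum_{j,k}\frac{\partial\theta_i^{j}}{\partial x_k}\otimes x_k\otimes y_j$ is the $W$-equivariant de Rham differential applied to the first factor of $\theta_i$ (bidegree $q^{e_i^*-1}t$, and $W$-invariant because $d$ is equivariant and $\theta_i\in M$). These contribute $P=\big(\sum_i q^{e_i^*}\big)(1+q^{-1}t)$, and multiplying by $\Hilb(A)$ gives exactly the product in \eqref{Reiner-Shepler}. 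A rank count confirms the target count: $M$ is free over $S^W$ of rank $\dim(\wedge V^*\otimes V)=n2^n$ while $A$ has $S^W$-rank $2^{n-1}$, so $M$ is free over $A$ of rank $2n$; thus by graded Nakayama it suffices to prove that the $2n$ proposed elements generate $M$ over $A$.

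\emph{Main obstacle.} The crux is precisely this spanning claim, and it is where the duality hypothesis $e_i+e^*_{n+1-i}=h$ is used. Because $A$ omits the top generator $df_n$, one must show that every occurrence of $df_n$ in $M$ can be rewritten in terms of the $\theta_i,d\theta_i$ and the lower generators $df_1,\dots,df_{n-1}$. The mechanism I would pursue is contraction against the Euler derivation $\theta_E=\theta_1$: the Euler identity $\iota_{\theta_E}(df_j)=d_j f_j$ converts a differential $df_j$ paired into the derivation slot into a polynomial multiple, and iterating such contractions, together with the exponent--coexponent pairing that matches the degree $e_n$ of $df_n$ against the coexponents, should produce the relations that eliminate $df_n$. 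Once spanning is established the remaining steps are the bookkeeping above. The $t=0$ specialization is a useful consistency check: there $M$ restricts to $(S\otimes V)^W$, freely generated over $S^W$ by the $\theta_i$, which recovers the $t$-free part $\sum_i q^{e_i^*}$ of $P$ directly from \eqref{coexponent-definition-consequence}.
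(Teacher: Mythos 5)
This theorem is not proved in the present paper; it is imported from \cite{ReinerShepler}, so the comparison below is against that proof and against the machinery this paper builds around it (Sections~\ref{Gutkin-Opdam-section}--\ref{mainproof}). Your Molien step is correct and routine, and you have correctly reconstructed the content of the cited result: the module $M=(S(V^*)\otimes\wedge V^*\otimes V)^W$ is free over $A=\bigwedge_{S(V^*)^W}\{df_1,\dots,df_{n-1}\}$ on the $2n$ elements $\{\theta_i\}\cup\{d\theta_i\}$ (in the paper's notation, $d\theta_i=\tilde\theta_E(\theta_i)$; this is exactly Conjecture~\ref{structural-Molchanov} at $r=1$, which Remark~\ref{known-special-cases-remark} attributes to \cite{ReinerShepler}). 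The bidegree bookkeeping, the rank count $n2^n/2^{n-1}=2n$, the invariance of $d\theta_i$ (Lemma~\ref{operatorinvariance}), and the $t=0$ consistency check are all fine.

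The genuine gap is the one you flag yourself: the claim that these $2n$ elements generate $M$ over $A$. Everything else is bookkeeping, so this claim \emph{is} the theorem, and the mechanism you sketch does not obviously close it. The Euler identity $\iota_{\theta_E}(df_j)=d_jf_j$ lives in $(S\otimes\wedge V^*)^W$, whereas the elements of $M$ carry an uncontracted derivation slot; you would need to explain precisely which contraction pairs the $V$-factor of $M$ against $df_n$, why the resulting relation is solvable for the $df_n$-component, and where the duality condition $e_i+e^*_{n+1-i}=h$ enters --- ``iterating such contractions \dots should produce the relations'' is not an argument. The route actually taken in \cite{ReinerShepler}, and the one this paper systematizes for its generalization, avoids proving spanning directly: one first computes the total degree of any homogeneous $S(V^*)^W$-basis a priori via the Gutkin--Opdam lemma (as in Lemma~\ref{Gutkin-Opdam-Calculation} and Proposition~\ref{Gutkin-Opdam-matches}, where duality is used in the form $N+N^*=nh$), and then invokes Lemma~\ref{graded-commutative-algebra-lemma} (quoted from \cite{ReinerShepler}) to reduce the basis claim to linear independence of the candidate elements over $\Frac(S(V^*))$, i.e.\ to the nonvanishing of an explicit determinant. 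If you want to complete your proof, I would redirect the ``main obstacle'' paragraph to that degree-count-plus-independence argument rather than to a rewriting procedure for $df_n$.
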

%Here we use a canonical set of $2n$ 
%basis elements
%$\{ \theta_1,\ldots,\theta_n, d\theta_1,\ldots,d\theta_n\}$
%with $\theta_i$ in 
%$S_{e^*_i} \otimes \wedge^0 V^* \otimes V$ and
%$d\theta_i$ in $S_{e^*_i-1} \otimes \wedge^1 V^* \otimes V$.  

Note that setting $t=0$ in~\eqref{Reiner-Shepler} gives~\eqref{coexponent-definition-consequence}
for duality groups $W$.

\subsection*{The $W$-invariants of
  $S(V^*) \otimes \wedge V^* \otimes \wedge^n V$}
The following theorem holds for all complex reflection groups
and agrees with extraction of the coefficient of $s^n$ in Theorem~\ref{maintheorem}.
It follows from results of the second author~\cite{Shepler99, Shepler05}, but we include a proof
since we have not seen it stated explicitly in the literature.

\begin{theorem}
\label{Shepler-product}
For any complex reflection group $W$ acting on $V$,
$$
\tfrac{1}{|W|} 
\sum_{w \in W} \det(w^{-1}) \frac{\det(1+tw)}{\det(1-qw)} 
=  \Hilb\big((S(V^*)\otimes \wedge V^* \otimes \wedge^n V)^W,q,t \big)
\, 
=\prod_{i=1}^n \frac{q^{e_i^*}+t}{1-q^{d_i}}
  \, .
  $$
\end{theorem}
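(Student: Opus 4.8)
The plan is to read off the first (Molien) equality directly and then to obtain the product formula by a one-line determinant manipulation that reduces the whole statement to the already-established Orlik--Solomon computation~\eqref{Orlik-Solomon-consequence}.

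First I would record that $\wedge^n V$ is one-dimensional and that $W$ acts on it through the character $w\mapsto\det(w)$, so $\Tr(w\mid\wedge^n V)=\det(w)$. Inserting this single factor into the usual Molien trace computation for $S(V^*)\otimes\wedge V^*$ --- whose trace generating series are $\sum_i q^i\Tr(w\mid S^i V^*)=\det(1-qw\mid V^*)^{-1}$ and $\sum_k t^k\Tr(w\mid\wedge^k V^*)=\det(1+tw\mid V^*)$ --- presents the Hilbert series $\Hilb\big((S(V^*)\otimes\wedge V^*\otimes\wedge^n V)^W,q,t\big)$ as an average over $W$. Rewriting the $V^*$-determinants on $V$ via $\det(1\pm c\,w\mid V^*)=\det(1\pm c\,w^{-1}\mid V)$ and $\det(w\mid\wedge^n V)=\det(w\mid V)$, and then replacing $w$ by $w^{-1}$ (a bijection of $W$), turns the average into exactly $\tfrac1{|W|}\sum_w\det(w^{-1})\tfrac{\det(1+tw)}{\det(1-qw)}$. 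This bookkeeping between $V$ and $V^*$ is the only place one must be careful, since it is what lands the determinantal twist as $\det(w^{-1})$ rather than $\det(w)$.

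The heart of the argument is the elementary identity, valid for any invertible operator $g$ on an $n$-dimensional space,
$$
\det(1+tg)=t^n\,\det(g)\,\det(1+t^{-1}g^{-1}),
$$
which one checks on the eigenvalues $\mu_j$ of $g$ by factoring one $t$ out of each term of $\prod_j(1+t\mu_j)$. Applied with $g=w$ it collapses the awkward twist into
$$
\det(w^{-1})\,\det(1+tw)=t^n\,\det(1+t^{-1}w^{-1}).
$$

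It then remains only to substitute this into the character sum and invoke the Orlik--Solomon formula~\eqref{Orlik-Solomon-consequence} at $s=t^{-1}$:
$$
\tfrac1{|W|}\sum_{w\in W}\det(w^{-1})\frac{\det(1+tw)}{\det(1-qw)}=t^n\cdot\tfrac1{|W|}\sum_{w\in W}\frac{\det(1+t^{-1}w^{-1})}{\det(1-qw)}=t^n\prod_{i=1}^n\frac{1+q^{e_i^*}t^{-1}}{1-q^{d_i}}=\prod_{i=1}^n\frac{q^{e_i^*}+t}{1-q^{d_i}},
$$
where the last step distributes one factor of $t$ into each numerator. I do not expect a genuine obstacle here: the single idea is to spot the determinant identity, after which the statement for \emph{all} complex reflection groups follows formally from~\eqref{Orlik-Solomon-consequence}. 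Note in particular that the coexponents $e_i^*$ (rather than the exponents $e_i$) appear precisely because the reduction routes through $\wedge V$, not $\wedge V^*$, which is consistent with the appearance of $q^{e_i^*}$ on the right-hand side.
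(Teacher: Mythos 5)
Your proof is correct, but it takes a genuinely different route from the paper's. You reduce the entire statement to the Orlik--Solomon Hilbert series formula \eqref{Orlik-Solomon-consequence} via the elementary identity $\det(1+tw)=t^n\det(w)\det(1+t^{-1}w^{-1})$, so that the $\det^{-1}$-twisted Molien sum becomes $t^n$ times the untwisted sum specialized at $s=t^{-1}$, and the product formula drops out by distributing one factor of $t$ into each numerator. The paper instead argues structurally: it invokes the result of \cite{Shepler99} that the $\det^{-1}$-relative invariant $p$-forms form a free $S(V^*)^W$-module with basis $\bigl\{ Q^{1-p}\,\omega_{i_1}\wedge\cdots\wedge\omega_{i_p}\bigr\}$, and then pins down the degrees $m_i=\deg(Q)-e_i^*$ of the generators $\omega_i$ by identifying $\left(S\otimes V^*\otimes\det\right)^W$ with $\left(S\otimes\wedge^{n-1}V\right)^W$ via $\wedge^{n-1}V\cong V^*\otimes\det$ and reading off degrees from \eqref{OrlikSolomon}. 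What your route buys is brevity and generality-for-free: once \eqref{Orlik-Solomon-consequence} is granted, the Hilbert series identity for all complex reflection groups is a two-line formal consequence, with the only point of care being that the substitution $s=t^{-1}$ is legitimate (both sides of \eqref{Orlik-Solomon-consequence} are polynomials of degree $n$ in $s$ over $\CC(q)$, and multiplying by $t^n$ clears the denominators). What the paper's route buys is an explicit free $S(V^*)^W$-basis of $(S(V^*)\otimes\wedge V^*\otimes\wedge^n V)^W$, which is strictly more information than the Hilbert series and is in the spirit of Conjecture~\ref{structural-Molchanov}. It is worth observing that your determinant identity is exactly the character-level shadow of the isomorphism $\wedge^{r}V\cong\wedge^{n-r}V^*\otimes\wedge^n V$ whose $r=n-1$ case the paper uses to compute the degrees, so the two arguments are dual in spirit even though they read quite differently.
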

 
\begin{proof}
    Since $\wedge^n V$ carries the determinant character $\det$ of $W$,
    the $S^W$-module $(S \ot \wedge V^* \ot \wedge^n V)^W$
    for $S=S(V^*)$
   is
  the space of {\it $\det^{-1}$-relative invariants} for $W$ acting
  on $S \ot \wedge V^*$.  It was shown in~\cite{Shepler99} (see also~\cite{SheplerTerao})
  that the
  $\det^{-1}$-relative invariant $1$-forms $\left(S\ot V^*\right)^{\det^{-1}}$
  form a free $S^W$-module and any basis $\{ \omega_1,\ldots,\omega_n\}$
  generates the $\det^{-1}$-relative invariant $p$-forms
  $\left( S \ot \wedge^p V^*\right)^{\det^{-1}}$ 
  freely over $S^W$ with basis
  $$
  \left\{ \tfrac{1}{Q^{p-1}\rule{0ex}{1.5ex}} \cdot \omega_{i_1} \wedge \cdots \wedge \omega_{i_p}
       : 1 \leq i_1 < \cdots < i_p \leq n \right\}.
     $$
     Here $Q:=\prod_H \ell_H$ is the $\det^{-1}$-relative invariant
     in $S$ of lowest degree,
     namely, the product of the linear forms $\ell_H$ whose vanishings
     define the reflecting hyperplanes $H$ of $W$.  Thus $Q$ has degree
     equal to the number $N^*$ of
     reflecting hyperplanes, with known formula (e.g.,~\cite[\S4.5.5, Remark~4.48]{Broue})
     $$
     \deg(Q)=:N^*=e_1^*+\cdots+e_n^*.
     $$
  If $\omega_1,\ldots,\omega_n$ are homogeneous with polynomial degrees $m_1,\ldots,m_n$,
  (so each $\omega_i$ lies in $S^{m_i} \otimes V^*$), then
  $$
  \begin{aligned}
  \frac{\Hilb\left((S\otimes \wedge V^* \otimes \wedge^n V)^W,q,t \right)}
  {\Hilb(S^W,q)}
  &= q^{\deg(Q)} \sum_{p=0}^n t^p \sum_{1 \leq i_1 < \cdots < i_p \leq n}
      q^{m_{i_1}+\cdots+m_{i_p}-p\deg(Q)}\\
      &=q^{\deg(Q)} \prod_{i=1}^n (1+tq^{m_i-\deg(Q)})
      =\prod_{i=1}^n (q^{e_i^*}+tq^{e_i^*+m_i-\deg(Q)}).      
  \end{aligned}
  $$
  Since $\Hilb(S^W,q)=\prod_{i=1}^n \tfrac{1}{1-q^{d_i}}$, the theorem will follow
  if we show that one can index so that
  $$
  m_i=\deg(Q)-e_i^*=(e_1^*+\cdots+e_n^*)-e_i^*
  $$
  for $i=1,2,\ldots,n$.
  To see this, we proceed as in the proof of~\cite[Cor.~4]{Shepler05}.  First note that the nondegenerate pairing
  $
  V \otimes \wedge^{n-1}V \rightarrow \wedge^n V
  $
  is $W$-equivariant, where $\wedge^n V$ carries the character $\det$.
  This implies $\wedge^{n-1} V \cong V^* \otimes \det$ as $W$-representations.
  Therefore the
  $S^W$-module of $\det^{-1}$-relative invariants in $S \otimes V^*$,
  which is isomorphic to the $W$-invariants
  $
  \left( S \otimes V^* \otimes \det\right)^W,
  $
  is also isomorphic to the $W$-invariants
  $\left( S \otimes\wedge^{n-1} V \right)^W$.
  However, by~(\ref{OrlikSolomon}),
  the latter has $S^W$-basis
      $\{ \theta_{1} \wedge \cdots \wedge \hat{\theta}_{i} \wedge
      \cdots \wedge \theta_n: i=1,2,\ldots,n \}$,
      whose elements indeed have degrees $(e_1^*+\cdots+e_n^*)-e_i^*$.
\end{proof}

\subsection*{Invariant derivation differential forms}
Theorem~\ref{maintheorem} describes the (triply-graded) Hilbert series
\begin{equation}
\label{molien3-var}
\Hilb\left((S(V^*) \otimes \wedge V^* \otimes \wedge V)^W,q,t,s \right)
=\tfrac{1}{|W|} \sum_{r=0}^{n} \sum_{w \in W} \frac{\det(1+tw)\det(1+sw^{-1})}{\det(1-qw)},\\
\end{equation}
where the coefficient of $q^i t^k s^r$ is the dimension of $(S^i(V^*) \otimes \wedge^k V^* \otimes \wedge^r V)^W$, in terms of
exponents and coexponents and specializes to all of
\eqref{Shephard-Todd-Chevalley-consequence},
\eqref{coexponent-definition-consequence}, \eqref{Solomon-consequence},
\eqref{Orlik-Solomon-consequence}, \eqref{Reiner-Shepler} and
Theorem~\ref{Shepler-product}.  However, it applies only
to the subfamily of {\it coincidental} reflection groups, described further here.

\vskip.1in
\noindent
{\bf Coincidental groups.}
A reflection group $W$ is {\it coincidental} if it is an irreducible duality group whose
exponents (or equivalently, its degrees, or coexponents, or codegrees) 
form an arithmetic sequence.  
The coincidental reflection groups comprise
(using notation from the classification of Shephard and Todd~\cite{ShephardTodd})
\begin{itemize}
\item the real reflection groups $A_n$, $B_n/C_n$, $I_2(m)$, $H_3$,
\item the monomial groups $G(d,1,n)$,
\item all rank $2$ duality groups, namely,
$G_4$, $G_5$, $G_6$, $G_8$, $G_9$, $G_{10}$, $G_{14}$, $G_{16}$,
$G_{17}$, $G_{18}$, $G_{20}$, and $G_{21}$, and
\item the groups $G_{25}$, $G_{26}$, and $G_{32}$.
\end{itemize}
Note that coincidental groups include all Shephard groups  that are not Coxeter groups.
Among the Coxeter-Shephard groups, coincidental groups exclude type $D_n$ for $n \geq 4$ and the real exceptional groups, $E_6$, $E_7$, $E_8$, $F_4$, $H_4$, i.e., those groups whose Coxeter diagram contains one of $D_4$, $F_4$, or $H_4$ as a subdiagram.
The coincidental groups have made multiple appearances recently, for example,
in the work of Miller~\cite[Thm.~14]{MillerFoulkes},~\cite[Thm.~2]{MillerWalls}.  
See~\cite[\S5]{Doppelgangers} for examples of real coincidental types in the literature.

Numerology of coincidental groups is governed by 
two parameters, the {\it smallest exponent} $e_1$ and the {\it gap} 
$$
a:=d_i-d_{i-1}=e_i-e_{i-1}=e^*_i-e^*_{i-1}
$$ 
between any two successive exponents, or fundamental degrees, or coexponents:
$$
\begin{array}{rcccccccl}
(d_1,d_2,\ldots,d_n)&=& ( &e_1+1,&e_1+1+a,&e_1+1+2a&,\ldots,&e_1+1+a(n-1)&)\\
(e_1,e_2,\ldots,e_n)&=& ( &e_1,&e_1+a,&e_1+2a&,\ldots,&e_1+a(n-1)&)\\
(e^*_1,e^*_2,\ldots,e^*_n)&=& (&1,&1+a,&1+2a&,\ldots,&1+a(n-1)&) \, .
\end{array}
$$
\vskip.1in

%%%%%%%%%%%%%%%%%%%%%%%%%%%%%%%%%%%%%%%%%%%%%%%%%%%%%%%
%%%%%%%%%%%%%%%%%%%%%%%%%%%%%%%%%%%%%%%%%%%%%%%%%%%%%%%
\section{Type $A$ and the monomial groups}
\label{G(d,1,n)-section}

We begin compiling our verification of Theorem~\ref{maintheorem} with the Weyl groups of type $A$ and the infinite
family of Shephard-Todd groups $G(d,1,n)$.

Recall that for positive integers $d,n$, the complex reflection 
group $G(d,1,n)$ is the
set of all $n \times n$ matrices in $\text{GL}(V)=\text{GL}_n(\CC)$
that are {\it monomial} (exactly one nonzero entry in each row and column) 
with nonzero entries
all $d^{\,\text{th}}$ roots-of-unity in $\CC$.  
Any element $w$ in $G(d,1,n)$ maps the basis vectors
$y_1,\ldots,y_n$ of $V=\CC^n$ to
$\zeta^{m_1} y_{\pi(1)},\ldots, \zeta^{m_n} y_{\pi(n)}$
for some  permutation $\pi=\pi(w)$ in the symmetric group 
$\symm_n=G(1,1,n)$, where $\zeta$ is the complex root-of-unity $e^{\frac{2\pi i}{d}}$.
In fact,  $G(d,1,n) = \symm_n \ltimes (\ZZ/d\ZZ)^n$ since
the map $w \mapsto \pi(w)$ is a surjective
group homomorphism, $G(d,1,n) {\longrightarrow}\ \symm_n$, whose
kernel is the subgroup $(\ZZ/d\ZZ)^n$ of diagonal matrices within $G(d,1,n)$.
We need to draw a distinction between the symmetric group $\symm_n$ acting as 
the permutation matrices $G(1,1,n)$ on one hand
and acting as the Weyl group $A_{n-1}$ on the other hand:
\begin{itemize} 
\item
The group $G(1,1,n)$
acts on $V=\CC^n$ {\it reducibly}, with fixed space 
$V^{\symm_n}=\CC(y_1+\cdots+y_n)$.
\item The group $W(A_{n-1})$ acts {\em irreducibly} on the quotient space
$V/\CC(y_1+\cdots+y_n)\cong \CC^{n-1}$.
\end{itemize}
 
For any finite group $W$, we introduce a shorthand notation for the Hilbert series describing the 
isotypic component in $S(V^*)\ot \wedge V^*$ corresponding to a $W$-representation 
$M$ with character $\chi$.
Again, we use a Molien type theorem to write this Hilbert series as a sum over group elements
and abbreviate $S=S(V^*)$.
We are interested in the special case when $\chi$
is the character of the $W$-representation $\wedge^r V$ giving~(\ref{molien3-var}).
\begin{definition}
For any finite subgroup $W$ of $\text{GL}(V)$ and
any character $\chi$ of a $W$-module $M$, define
$$
\begin{aligned}
P_W(\chi;q,t) &:=
\Hilb((S\ot \wedge V^*\ot M^*)^W, q,t)
=\tfrac{1}{|W|} 
\sum_{w \in W} \cchi(w)\,  \frac{\det(1+tw)}{\det(1-qw)}\, 
\quad\text{ and}\quad
\\
\cchi_{S\otimes \wedge V^*}(q,t)(w) &:=
\sum_{j=0}^\infty \sum_{k=0}^n \ q^j\, t^k \ \cchi_{S^j \otimes \wedge^k V^*}(w)
\quad\text{ for $w$ in $W$}.
\end{aligned}
$$
\end{definition}
The second expression, $\cchi_{S\otimes \wedge V^*}(q,t)(w)$, is a
class function on $W$ with values in the ring $\ZZ[t][[q]]$.
Notice that in terms of the usual inner product $\langle \, \cdot\,,\, \cdot \,\rangle_W$
on $W$-class functions, 
$$
P_W(\chi;q,t)=\langle \, \chi \, , \, \cchi_{S\otimes \wedge V^*}(q,t) \, \rangle_W\, .
$$
\noindent
  The next two subsections review 
formulas for $P_W(\chi;q,t)$ for $\chi$ a $W$-irreducible character 
for $W=W(A_{n-1})$ and $G(d,1,n)$.  Since 
$\wedge^r V$ and $\left( \wedge^r V \right)^*$ are
irreducible $W$-representations (see~\cite[\S 24-3]{Kane}), 
we examine
\begin{equation}
\label{Hilb-and-P-relation}
P_W(\left(\cchi_{\wedge^r V} \right)^* ;q,t)=
\Hilb((S\ot \wedge V^*\ot \wedge^r V)^W, q,t)
\end{equation}
to verify Theorem~\ref{maintheorem} for the
coincidental types $W(A_{n-1})$ and $G(d,1,n)$ with $d \geq 2$.

\subsection*{The Type A
formula of Kirillov-Pak, Molchanov, Thibon, Gyoja-Nishiyama-Shimura}

The following ``hook-content'' formula for $P_W(\chi;q,t)$
when $W=\symm_n=G(1,1,n)$ is the building block for the rest.  This formula was proven first by
Kirillov and Pak~\cite[eqn.~(4)]{KirillovPak} bijectively and then
more algebraically by Molchanov~\cite[eqn.~(2)]{Molchanov}.
It was also
deduced from symmetric function identities 
by Thibon~\cite[Thm.~4.3]{Thibon} and by Gyoja, Nishiyama, and Shimura~\cite[eqn.~(3.2)]{GyojaNishiyamaShimura}.
To state the formula, recall that the
irreducible characters of the symmetric group $\symm_n$ can
be indexed as $\{ \chi^\lambda \}$ where $\lambda=(\lambda_1 \geq \lambda_2 \geq \cdots \geq 0)$
runs through the partitions of $n$, that is, $|\lambda|:=\sum_i \lambda_i=n$.
Define $n(\lambda):=\sum_{i \geq 1} (i-1) \lambda_i$. 
Recall also the notion of the {\it Ferrers diagram} for $\lambda$, containing the
{\it cells} $x=(i,j)$ in row $i$ and column $j$ for $1 \leq j \leq \lambda_i$.
The cell $x=(i,j)$ is said to have {\it content} $c(x):=j-i$ and
{\it hooklength} $h(x):= \lambda_i+\#\{k: \lambda_k \geq i\}-j$.

\begin{theorem}
\cite{GyojaNishiyamaShimura, KirillovPak, Molchanov}
\label{GNS-KP-Molchanov-formula}
For $W=\symm_n=G(1,1,n)$ acting on $V=\CC^n$ and any character $\chi^\lambda$ of $W$,
$$
P_{\symm_n}(\chi^\lambda;q,t)
 = q^{n(\lambda)} \prod_{ x \in \lambda } \frac{1+t\, q^{c(x)}}{1-q^{h(x)}}\, .
$$
\end{theorem}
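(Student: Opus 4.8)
The plan is to prove the sharper statement that
$$
P_{\symm_n}(\chi^\lambda;q,t)=\phi(s_\lambda),
$$
where $\phi\colon\Lambda\to\QQ(q,t)$ is the algebra specialization of the ring $\Lambda$ of symmetric functions determined on power sums by $\phi(p_k)=\frac{1+(-1)^{k-1}t^k}{1-q^k}$, and then to evaluate $\phi(s_\lambda)$ in closed form. The first step is a direct determinant computation: since $w\in\symm_n=G(1,1,n)$ acts on $V=\CC^n$ by a permutation matrix, both determinants factor over the cycles $c$ of $w$, giving $\det(1-qw)=\prod_c(1-q^{|c|})$ and $\det(1+tw)=\prod_c\bigl(1+(-1)^{|c|-1}t^{|c|}\bigr)$ (checked cycle-by-cycle on the eigenvalue product over roots of unity). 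Hence the Molien summand $\frac{\det(1+tw)}{\det(1-qw)}$ is a class function of the cycle type $\mu=(1^{m_1}2^{m_2}\cdots)$ of $w$, equal to $\prod_k\bigl(\tfrac{1+(-1)^{k-1}t^k}{1-q^k}\bigr)^{m_k}=\phi(p_\mu)$.

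Next I would recall the power-sum expansion $s_\lambda=\sum_{\mu\vdash n}z_\mu^{-1}\chi^\lambda(\mu)\,p_\mu$ of the Schur function. Because the characters of $\symm_n$ are real-valued (so the choice between $M$ and $M^*$ is immaterial here), grouping the defining sum for $P_{\symm_n}(\chi^\lambda;q,t)$ by conjugacy class and matching it against this expansion yields
$$
P_{\symm_n}(\chi^\lambda;q,t)=\sum_{\mu\vdash n}z_\mu^{-1}\chi^\lambda(\mu)\,\phi(p_\mu)=\phi(s_\lambda).
$$
This reduces the theorem to a purely symmetric-function evaluation.

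For the evaluation of $\phi(s_\lambda)$ I would introduce a formal variable $z$ and the companion specialization $\tilde\psi(p_k):=\frac{1-z^k}{1-q^k}$, which is the principal specialization $p_k(1,q,\dots,q^{N-1})$ under $z=q^N$. Stanley's hook-content formula gives $s_\lambda(1,q,\dots,q^{N-1})=q^{n(\lambda)}\prod_{x\in\lambda}\frac{1-q^{N+c(x)}}{1-q^{h(x)}}$ for every $N$; since this holds at the infinitely many points $z=q^N$, it is an identity of rational functions $\tilde\psi(s_\lambda)=q^{n(\lambda)}\prod_{x\in\lambda}\frac{1-z\,q^{c(x)}}{1-q^{h(x)}}$ in $\QQ(q,z)$. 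Finally I substitute $z=-t$: because $\tilde\psi(p_k)|_{z=-t}=\frac{1-(-t)^k}{1-q^k}=\phi(p_k)$ and an algebra map out of $\Lambda$ is determined by its values on the $p_k$, one has $\phi=\tilde\psi|_{z=-t}$, whence $\phi(s_\lambda)=q^{n(\lambda)}\prod_{x\in\lambda}\frac{1+t\,q^{c(x)}}{1-q^{h(x)}}$, as claimed.

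I expect the crux to be this last step. The content factors $1+t\,q^{c(x)}$ arise from no single finite specialization; the clean way to manufacture them is precisely the ``analytic continuation in the number of variables'' $q^N\mapsto -t$, which converts the finite hook-content formula into its $t$-deformation, and care is needed to phrase this as an honest identity of rational functions in $z$ (justified by agreement at infinitely many $z=q^N$) before setting $z=-t$. An alternative route threads the same specialization through the super-Cauchy identity, identifying $\phi(s_\lambda)$ as a principal specialization of a super (hook) Schur function; but this merely relocates the identical content-hook evaluation and is no shorter, so I would keep the power-sum/hook-content argument as the main line.
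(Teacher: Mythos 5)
The paper does not actually prove this theorem: it is quoted from the literature, with the surrounding text attributing a bijective proof to Kirillov--Pak, an algebraic one to Molchanov, and derivations ``from symmetric function identities'' to Thibon and to Gyoja--Nishiyama--Shimura. Your argument is correct and is essentially a clean, self-contained version of that last symmetric-function route. The cycle-by-cycle factorizations $\det(1-qw)=\prod_c(1-q^{|c|})$ and $\det(1+tw)=\prod_c(1-(-t)^{|c|})$ are right, the identification $P_{\symm_n}(\chi^\lambda;q,t)=\sum_\mu z_\mu^{-1}\chi^\lambda(\mu)\phi(p_\mu)=\phi(s_\lambda)$ is the standard character-orthogonality/power-sum expansion, and the final step is sound: $\tilde\psi(s_\lambda)$ lies in $\QQ(q)[z]$ (the denominators involve $q$ only), it agrees with $q^{n(\lambda)}\prod_{x\in\lambda}\frac{1-zq^{c(x)}}{1-q^{h(x)}}$ at the infinitely many distinct points $z=q^N$ of $\QQ(q)$ by Stanley's hook-content formula, hence identically, and the specialization $z=-t$ is legitimate because a homomorphism out of $\Lambda_{\QQ}=\QQ[p_1,p_2,\dots]$ is determined by its values on the $p_k$. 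What your approach buys over the cited bijective proof is brevity and transparency; what it costs is reliance on the hook-content formula as a black box. No gaps.
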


The theorem gives the following corollary for the irreducible action $A_{n-1}$ of the symmetric group
$\mathfrak{S}_n$.

\begin{corollary}
\label{type-A-hook-content-formula} 
For the Weyl group $W(A_{n-1})$ acting on $V=\CC^{n-1}$ and any  irreducible character $\chi^\lambda$,
\begin{align}
P_{W(A_{n-1})}(\chi^\lambda;q,t)
\label{first-type-A-formula}
 &= \frac{1-q}{1+t} \ \ P_{\symm_n}(\chi^\lambda;q,t)\\
\label{second-type-A-formula} 
 &= \frac{1-q}{1+t}  \ q^{n(\lambda)} \prod_{ x \in \lambda } \frac{1+t\, q^{c(x)}}{1-q^{h(x)}}\, .
 \end{align}
In particular,
for $\chi^{\lambda}$ the character $\cchi_{\wedge^r V}$ of the
$W$-representation $\wedge^r V$ for  some fixed $r=0,\ldots, n$, 
 \begin{equation}
\label{third-type-A-formula}
P_{W(A_{n-1})}(\cchi_{\wedge^r V};q,t)
 = q^{r+{\binom{r+1}{2}}} 
     \qbin{n-1}{r}{q} \frac{(-t\, q;q)_{n-r-1} (-t\, q^{-1};q^{-1})_r }{(q;q)_{n-1}}\, ,
\end{equation}
and the assertion of
Theorem~\ref{maintheorem} holds for type $A_{n-1}$.
\end{corollary}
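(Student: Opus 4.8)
The plan is to establish the three displayed identities in turn and then read off Theorem~\ref{maintheorem} for type $A_{n-1}$ from the last of them. For \eqref{first-type-A-formula} I would use the reducible splitting of the permutation representation $\CC^n$ of $\symm_n=G(1,1,n)$ as $V_{A_{n-1}}\oplus\mathbf{1}$, where $\mathbf{1}=\CC(y_1+\cdots+y_n)$ is the trivial summand and $V_{A_{n-1}}\cong\CC^{n-1}$ is the reflection representation. Since determinants are multiplicative over this splitting and the trivial summand contributes the factor $\tfrac{1+t}{1-q}$ to $\tfrac{\det(1+tw)}{\det(1-qw)}$ for \emph{every} $w$, that factor pulls out of the Molien-type sum defining $P_{\symm_n}(\chi^\lambda;q,t)$, leaving precisely the same sum computed over $V_{A_{n-1}}$ (the common class-function values $\chi^\lambda(w)$ are unaffected). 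This yields $P_{\symm_n}(\chi^\lambda;q,t)=\tfrac{1+t}{1-q}\,P_{W(A_{n-1})}(\chi^\lambda;q,t)$, which is \eqref{first-type-A-formula}; then \eqref{second-type-A-formula} is immediate upon inserting the hook-content evaluation of $P_{\symm_n}(\chi^\lambda;q,t)$ from Theorem~\ref{GNS-KP-Molchanov-formula}.

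The substance lies in \eqref{third-type-A-formula}. First I would identify the relevant partition: the exterior power $\wedge^r V$ of the reflection representation of $W(A_{n-1})$ is the hook irreducible $\chi^\lambda$ with $\lambda=(n-r,1^r)$, a partition of $n$ of dimension $\binom{n-1}{r}=\dim\wedge^r V$, and being a real representation it is self-dual, so $(\cchi_{\wedge^r V})^*=\cchi_{\wedge^r V}$ and \eqref{Hilb-and-P-relation} applies to $P_{W(A_{n-1})}(\cchi_{\wedge^r V};q,t)$. Reading the hook data off the shape is then routine: the contents run through $0,1,\ldots,n-r-1$ along the arm and $-1,-2,\ldots,-r$ down the leg, the hooklengths form the multiset $\{n\}\cup\{1,\ldots,n-r-1\}\cup\{1,\ldots,r\}$, and $n(\lambda)=\binom{r+1}{2}$. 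Substituting into \eqref{second-type-A-formula}, the content product factors as $(1+t)\,(-tq;q)_{n-r-1}(-tq^{-1};q^{-1})_r$, whose leading $(1+t)$ cancels against the $\tfrac{1}{1+t}$ prefactor, while the hooklength product is $(1-q^n)(q;q)_{n-r-1}(q;q)_r$.

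It then remains to collapse $q^{\binom{r+1}{2}}(-tq;q)_{n-r-1}(-tq^{-1};q^{-1})_r$ times $\tfrac{1-q}{(1-q^n)(q;q)_{n-r-1}(q;q)_r}$ into $q$-binomial/$q$-Pochhammer form, using $(q;q)_{n-r-1}(q;q)_r=(q;q)_{n-1}/\qbin{n-1}{r}{q}$ and $(1-q^n)(q;q)_{n-1}=(1-q)\,(q^2;q)_{n-1}$, so that the leftover $(1-q)$ cancels and the denominator recombines into $(q^2;q)_{n-1}=\prod_{k=2}^{n}(1-q^k)$. Finally, since $A_{n-1}$ is coincidental of rank $n-1$ with smallest exponent $e_1=1$ and gap $a=1$, this product equals the right-hand side of the equivalent (primed) form of Theorem~\ref{maintheorem} with $n$ replaced by $n-1$; by \eqref{Hilb-and-P-relation} it matches the coefficient of $s^r$ in Theorem~\ref{maintheorem}, establishing the theorem for type $A_{n-1}$. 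The only genuine obstacle is the bookkeeping in this last collapse: one must track the corner hooklength $n$ and the surviving $(1-q)$ so that the Hilbert-series denominator lands on $(q^2;q)_{n-1}=\prod_{k=2}^{n}(1-q^k)$, the correct denominator for the degrees $2,3,\ldots,n$ of $A_{n-1}$, and verifying the $r=0$ and small-$n$ cases by hand is a quick safeguard against slips.
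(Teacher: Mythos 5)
Your proposal is correct and follows essentially the same route as the paper: pulling the factor $\tfrac{1+t}{1-q}$ out of the Molien sum via the extra trivial eigenvalue of the permutation action, invoking Theorem~\ref{GNS-KP-Molchanov-formula}, specializing to the hook $\lambda=(n-r,1^r)$ with the same content/hooklength data (your hooklength multiset $\{n\}\cup\{1,\ldots,n-r-1\}\cup\{1,\ldots,r\}$ is in fact the one the paper's computation actually uses), and matching against Theorem~\ref{maintheorem}${}^\prime$ with $e_1=1$, $a=1$, rank $n-1$. No substantive differences.
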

\begin{proof}

  Equation~\eqref{first-type-A-formula} follows
  from the fact that
  a permutation matrix $\pi$ in $G(1,1,n)$
acts on $\CC^n$ with one extra eigenvalue $+1$ compared to its action $w$ on $\CC^{n-1}$
as an element in $W(A_{n-1})$, and hence
$$
\frac{\det(1+t\pi)}{\det(1-q\pi)}=
\frac{1+t}{1-q}\cdot  \frac{\det(1+tw)}{\det(1-qw)}
\, .
$$
Theorem~\ref{GNS-KP-Molchanov-formula} then gives~\eqref{second-type-A-formula}. 
For~\eqref{third-type-A-formula}, note that 
$\left( \cchi_{\wedge^r V}\right)^* = \cchi_{\wedge^r V} =\chi^\lambda$
for
$\lambda=(n-r,1^r)$,
which has
$$
\begin{aligned}
n(\lambda)&=\binom{r+1}{2},\\
\text{ cell contents }c(x)&=(0,1,2,\ldots,n-1-r,-1,-2,\ldots,-r),\\
\text{ hooklengths }h(x)&=(1,2,\ldots,n,1,2,\ldots,r).
\end{aligned}
$$
We apply~\eqref{second-type-A-formula} in this special case and obtain
\begin{equation}
  \label{type-A-wedge-Hilb-calculation}
\begin{aligned}
P_{W(A_{n-1})}(\left(\cchi_{\wedge^r V}\right)^*; q,t)
=P_{W(A_{n-1})}(\cchi_{\wedge^r V}; q,t) 
&=\frac{1-q}{1+t}\  \ q^{n(\lambda)} \ \prod_{ x \in \lambda } \frac{1+tq^{c(x)}}{1-q^{h(x)}} \\
&= \frac{1-q}{1+t} \ q^{\binom{r+1}{2}} \frac{1+t}{1-q^n}  \
  \prod_{i=1}^{n-r-1} \frac{1+tq^i}{1-q^i} \
  \prod_{i=1}^{r}  \frac{1+tq^{-i}}{1-q^i}\\
&=  q^{\binom{r+1}{2}} \ \frac{1-q}{1-q^n} \
  \frac{(-tq;q)_{n-r-1}}{(q;q)_{n-r-1}}
  \ \frac{(-tq^{-1};q^{-1})_r}{(q;q)_r}\\
&=  q^{\binom{r+1}{2}}\ \frac{1-q}{1-q^n} \
     \qbin{n-1}{r}{q}\ \frac{(-tq;q)_{n-r-1} \ (-tq^{-1};q^{-1})_r }{(q;q)_{n-1}} \\ 
%&= \frac{1-q}{1-q^n}  \cdot e_r(q^1,q^2,\ldots,q^{n-1}) \frac{(-tq;q)_{n-r-1} (-tq^{-1};q^{-1})_r }{(q;q)_{n-1}}\\
&=  q^{r+\binom{r}{2}} \
     \qbin{n-1}{r}{q} \
    \frac{(-tq;q)_{n-r-1} \ (-tq^{-1};q^{-1})_r }{(q^2;q)_{n-1}}  
\end{aligned}
\end{equation}
using \eqref{q-binomial-definition} in the penultimate equality.

Finally, to verify 
Theorem~\ref{maintheorem} for $W(A_{n-1})$,
we apply \eqref{Hilb-and-P-relation}
and check that the last expression in \eqref{type-A-wedge-Hilb-calculation}
agrees with the expression in Theorem~\ref{maintheorem}${}^\prime$  
(see~(\ref{molien3-var})).  This holds because $W(A_{n-1})$ has rank $n-1$ and exponents 
$$(e_1,\ldots,e_{n-1})=(1,2,\ldots,n-1),$$
so that its smallest exponent is $e_1=1$ and the gap between exponents is $a=1$. 
We also use the fact that $W(A_{n-1})$ is a real reflection group, so that
$V\cong V^*$ and $e_i^*=e_i$.
\end{proof}

\subsection*{Koike's formula for $G(d,1,n)$}

We review here a formula of Koike \cite{Koike} generalizing
to the monomial groups $W=G(d,1,n)$ for $d \geq 2$
the calculation of $P_W(\chi,t)$ completed for $G(2,1,n)=W(B_n)$ 
by Kirillov and Pak~\cite[eqn.~(6)]{KirillovPak};
see also Gyoja, Nishiyama, and Shimura~\cite[eqn.~(3.9)]{GyojaNishiyamaShimura}
for the case of $W(B_n)$.

Fix $d \geq 2$, and let us abbreviate $W_n:=G(d,1,n)$ acting on $V=\CC^n$.
The irreducible characters of $W_n$ can be indexed by {\it $d$-multipartitions of $n$}
\begin{equation}
\label{multipartition}
\underline{\lambda}=(\lambda^{(0)},\lambda^{(1)},\ldots,\lambda^{(d-1)})
\end{equation}
in which each $\lambda^{(i)}$ is a partition of $n_i$ and $\sum_{i=0}^{d-1} n_i = n$.
Denoting $\underline{n}:=(n_0,n_1,\ldots,n_{d_1})$, let $W_{\underline{n}}$ be
the subgroup of $W_n$ isomorphic to $W_{n_0} \times W_{n_1} \times \cdots \times W_{n_{d-1}}$
consisting of the block diagonal matrices in $W_n$ with diagonal block sizes
specified by $\underline{n}$.  

Recall that any element $w$ in $G(d,1,n)$ can be written uniquely as  $w=\text{diag}(w) \cdot \pi(w)$, the product of a diagonal matrix $\text{diag}(w)$
and a permutation matrix $\pi(w)$ in $G(1,1,n)$.
This gives rise to a $1$-dimensional character recording the determinant of the diagonal part of $w$
(i.e., the product of the nonzero entries in $w$):
\begin{equation}
  \label{definition-of-epsilon-character}
\epsilon: W_n  \longrightarrow \CC^\times,
\quad\quad
w \mapsto  \det(\text{diag}(w))\, .
\end{equation}
Given any $\symm_n$-character $\chi$,
one can {\it inflate} it along $\pi$ to a $W_n$-character that we will
denote $\chi \Uparrow_{\symm_n}^{W_n}$.  Given any $W_{\underline{n}}$-character $\chi$,
one can {\it induce} it up to $W_n$, giving a character that we will denote
 $\chi \uparrow_{W_{\underline{n}}}^{W_n}$.  One then has the following description for the
irreducible $W_n$-character indexed by $\underline{\lambda}$ as in \eqref{multipartition}:
$$
\chi^{\underline{\lambda}} 
  = \left( 
          \bigotimes_{i=0}^{d-1} 
                    \epsilon^i \otimes 
                          \left( \chi^{\lambda_i}  \Uparrow_{\symm_{n_i}}^{W_{n_i}}\right)
    \right) \uparrow_{W_{\underline{n}}}^{W_n}.
$$
Here $\epsilon$ is the same degree $1$ character of $W_n$ restricted to each $W_{n_i}$, and
$\epsilon^i$ is its $i^{\,\text{th}}$ tensor power.

We can now state Koike's result.  We include a proof which is
shorter and less computational than the one in \cite[pp. 545-548]{Koike},
following the methodology of Kirillov and Pak.

\begin{theorem} \cite[Theorem 1]{Koike}
\label{Kirillov-Pak-generalized}
For any $d$-multipartition $\underline{\lambda}$ of $n$ as above,
$$
P_{W_n}(\chi^{\underline{\lambda}};q,t)
= P_{\symm_{n_0}}(\chi^{\lambda^{(0)}};q^d,tq^{d-1})
   \cdot \prod_{i=1}^{d-1} q^{n_i(d-i)}\cdot P_{\symm_{n_i}}(\chi^{\lambda^{(i)}};q^d,tq^{-1}).
$$
\end{theorem}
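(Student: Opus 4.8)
The plan is to treat $P_{W_n}(\chi;q,t)$ as the inner product $\langle \chi,\Psi\rangle_{W_n}$ of $\chi$ against the graded class function $\Psi(w):=\det(1+tw)/\det(1-qw)$, valued in $\ZZ[t][[q]]$, and to strip off the combinatorial structure of $\chi^{\underline{\lambda}}$ one operation at a time, in the spirit of Kirillov--Pak rather than through Koike's direct symmetric-function manipulations. All the identities below are formal rearrangements of normalized sums over group elements, hence legitimate over the fraction field.

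First I would apply Frobenius reciprocity. Setting $\phi:=\bigotimes_{i=0}^{d-1}\epsilon^i\otimes\big(\chi^{\lambda^{(i)}}\Uparrow_{\symm_{n_i}}^{W_{n_i}}\big)$ so that $\chi^{\underline{\lambda}}=\phi\uparrow_{W_{\underline{n}}}^{W_n}$, reciprocity gives $\langle\chi^{\underline{\lambda}},\Psi\rangle_{W_n}=\langle\phi,\Psi|_{W_{\underline{n}}}\rangle_{W_{\underline{n}}}$. Under restriction to $W_{\underline{n}}=W_{n_0}\times\cdots\times W_{n_{d-1}}$ the module $V$ becomes the direct sum of the defining modules $V_i=\CC^{n_i}$, and since determinants are multiplicative across this decomposition, $\Psi|_{W_{\underline{n}}}$ factors as the external product of the class functions $\Psi_i$ defining $P_{W_{n_i}}$. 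As $\phi$ is itself an external tensor product, the inner product over the direct product splits, giving
$$
P_{W_n}(\chi^{\underline{\lambda}};q,t)=\prod_{i=0}^{d-1}P_{W_{n_i}}\big(\epsilon^i\otimes(\chi^{\lambda^{(i)}}\Uparrow);q,t\big).
$$
This reduces the theorem to a single-block statement for each $W_{n_i}=G(d,1,n_i)$.

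The heart of the proof, and the step I expect to be the main obstacle, is the single-block identity: for a partition $\mu$ of $m$,
$$
P_{G(d,1,m)}\big(\epsilon^i\otimes(\chi^\mu\Uparrow);q,t\big)=
\begin{cases}
P_{\symm_m}(\chi^\mu;q^d,tq^{d-1}), & i=0,\\[2pt]
q^{m(d-i)}\,P_{\symm_m}(\chi^\mu;q^d,tq^{-1}), & 1\le i\le d-1.
\end{cases}
$$
To prove it I would average over the normal diagonal torus $T=(\ZZ/d\ZZ)^m$ before summing over $\symm_m$. Writing $w=\mathrm{diag}(\zeta^{\vec c})\,\pi$, the inflated character depends only on $\pi$, the character $\epsilon^i$ contributes a power of $\zeta$ recording $i\sum_j c_j$, and the eigenvalues of $w$ on the block attached to an $\ell$-cycle of $\pi$ are the $\ell$-th roots of that cycle's product of roots of unity. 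Hence the $T$-average factors over the cycles of $\pi$, and on a single $\ell$-cycle it collapses to a root-of-unity filter of the form $\tfrac1d\sum_{b=0}^{d-1}\zeta^{-ib}\,\frac{1-(-t)^\ell\zeta^{-b}}{1-q^\ell\zeta^{-b}}$ (the $\zeta^{-b}$ recording the eigenvalues on $V^*$). Expanding the geometric series and extracting the residue class mod $d$ produces the substitution $q\mapsto q^d$ together with the recorded power of $q$ and the $t$-shift: the block factor equals $\big(1-(-tq^{d-1})^\ell\big)/\big(1-q^{\ell d}\big)$ when $i=0$ and $q^{(d-i)\ell}\big(1-(-tq^{-1})^\ell\big)/\big(1-q^{\ell d}\big)$ when $i\ge1$. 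These are precisely the $\ell$-cycle factors of $P_{\symm_m}(\chi^\mu;q^d,tq^{d-1})$ and of $P_{\symm_m}(\chi^\mu;q^d,tq^{-1})$; multiplying over all cycles (the powers of $q$ combining to $q^{m(d-i)}$ since the cycle lengths sum to $m$) gives the single-block identity. Feeding $m=n_i$ and $\mu=\lambda^{(i)}$ back into the factorization of the previous paragraph then yields the theorem.

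The delicacy is concentrated entirely in this filter step: one must get the exponent of $q$ right (here $d-i$, which is sensitive to the normalization of $\epsilon$, the choice of primitive root $\zeta$, and which determinant enters $\Psi$) and distinguish the two $t$-shifts $tq^{d-1}$ and $tq^{-1}$. Everything else---Frobenius reciprocity, multiplicativity of determinants over the block decomposition, and factorization of the inner product over a direct product---is formal.
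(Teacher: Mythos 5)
Your proposal is correct, and its skeleton (Frobenius reciprocity to split the inner product over the blocks of $W_{\underline{n}}$, then a single-block identity for $G(d,1,m)$) coincides with the paper's. Where you genuinely diverge is in how the single-block identity is established. The paper uses the inflation/fixed-point adjunction for the quotient $W_m \to W_m/(\ZZ/d\ZZ)^m \cong \symm_m$ and then \emph{structurally identifies} the $(\ZZ/d\ZZ)^m$-fixed subspace of $\epsilon^{-i}\otimes S(V^*)\otimes\wedge V^*$: for $i=0$ it is the polynomial algebra $\CC[x_1^d,\ldots,x_m^d]$ tensored with the exterior algebra on the $x_j^{d-1}\otimes x_j$, and for $i\ge 1$ it is a free rank-one module with generator $(x_1\cdots x_m)^{d-i}$ over the analogous algebra with exterior generators $x_j^{-1}\otimes x_j$; the two $t$-shifts $tq^{d-1}$, $tq^{-1}$ and the prefactor $q^{m(d-i)}$ are read off from the degrees of these generators. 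You instead average the Molien rational function over the diagonal torus cycle-by-cycle and evaluate the root-of-unity filter $\tfrac1d\sum_b \zeta^{-ib}\,\frac{1-(-t)^\ell\zeta^{-b}}{1-q^\ell\zeta^{-b}}$ by expanding the geometric series and extracting the residue class $k\equiv -i \pmod d$; I checked that this yields exactly $\bigl(1-(-tq^{d-1})^\ell\bigr)/\bigl(1-q^{\ell d}\bigr)$ for $i=0$ and $q^{(d-i)\ell}\bigl(1-(-tq^{-1})^\ell\bigr)/\bigl(1-q^{\ell d}\bigr)$ for $i\ge 1$, which are the correct $\ell$-cycle factors. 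Your route is more elementary and self-contained (pure generating-function manipulation, no module structure needed, closer to the classical wreath-product Molien computation), while the paper's buys an explicit description of the invariant space itself, which is in the spirit of the basis conjectures elsewhere in the paper. You are right that the only delicate points are the orientation conventions ($V$ versus $V^*$, the sign of $i$ in the filter); since your filter reproduces the stated exponent $d-i$ and both $t$-shifts, the conventions are consistent.
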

\begin{proof}
(cf.~\cite[Proof of Lemma~1]{KirillovPak})
By Frobenius reciprocity,
\begin{equation}
\label{Frobenius-reciprocity-simplification}
P_{W_n}(\cchi^{\underline{\lambda}};q,t)
= \big\langle \,\, \cchi^{\underline{\lambda}} \, , \, \cchi_{S\otimes \wedge V^*}(q,t) \,\, \big\rangle_{W_n}
= \prod_{i=0}^{d-1}
     \left\langle \,\, \epsilon^i \otimes 
                          \left( \cchi^{\lambda^{(i)}}  \Uparrow_{\symm_{n_i}}^{W_{n_i}} \right)
                 , \, \cchi_{S\otimes \wedge V^*}(q,t) \,\, \right\rangle_{W_{n_i}}.
\end{equation}
It remains to compute the $i^{\,\text{th}}$ factor in
the product.  For ease of notation, replace $n_i$ by $n$ and $\lambda^{(i)}$ by $\lambda$,
so that we may rewrite the $i^{\,\text{th}}$ factor simply as
\begin{equation}
\label{factor-rewriting}
\begin{aligned}
\left\langle \,\, \epsilon^i \otimes 
                          \left( \chi^{\lambda}  \Uparrow_{\symm_n}^{W_n} \right)
                 , \, \cchi_{S\otimes \wedge V^*}(q,t) \,\, \right\rangle_{W_n}
&=
\left\langle \,\, \chi^{\lambda}  \Uparrow_{\symm_n}^{W_n} 
                 , \, \epsilon^{-i} \otimes \cchi_{S(V^*)\otimes \wedge V^*}(q,t) \,\, 
\right\rangle_{W_n} \\
&=
\left\langle \,\, \chi^{\lambda} , \,
                 \cchi_{\left( \epsilon^{-i} \otimes 
                          S(V^*)\otimes \wedge V^* \right)^{(\ZZ/d\ZZ)^n}}(q,t) \,\, 
\right\rangle_{\symm_n}\, .
\end{aligned}
\end{equation}
Here the last equality uses a general adjointness statement 
(see, e.g.,~\cite[\S 4.1.6]{GrinbergReiner}) for quotient groups $G \rightarrow G/N$, 
taking $(G,N,G/N)$ to be $(W_n,(\ZZ/d\ZZ)^n,\symm_n)$: 
For any $G$-representation $A$, with residual $G/N$-representation on its $N$-fixed space $A^N$, and
any $G/N$-representation $B$,
$$
\langle  \,\, \cchi_B \Uparrow_{G/N}^G \, , \, \cchi_A \,\, \rangle_G 
= \langle \,\, \cchi_B \, , \, \cchi_{A^N} \,\, \rangle_{G/N} \, .
$$

We next examine, for $0 \leq i \leq d-1$, the $(\ZZ/d\ZZ)^n$-fixed space,
\begin{equation}
\label{relative-invariants-of-epsilon-power}
\left( \epsilon^{-i} \otimes S(V^*)\otimes \wedge V^* \right)^{(\ZZ/d\ZZ)^n}\, .
\end{equation}
Note the following tensor product decomposition, compatible with the $(\ZZ/d\ZZ)^n$-action:
$$
S(V^*) \otimes  \wedge V^* = \bigotimes_{j=1}^n \CC[x_j] \otimes \wedge \CC{x_j}\, .
$$
Since the character $\epsilon$ restricted to $(\ZZ/d\ZZ)^n$ is just
the tensor product of $\epsilon$ on each of the factors $\ZZ/d\ZZ$,
the $(\ZZ/d\ZZ)^n$-fixed space \eqref{relative-invariants-of-epsilon-power}
is the tensor product of these $\ZZ/d\ZZ$-fixed spaces: 
$$
\big( \epsilon^{-i} \otimes \CC[x_j] \otimes \wedge \CC{x_j} \big)^{\ZZ/d\ZZ}
=\begin{cases}
 \big( \CC[x_j^d] \otimes 1  \big) \oplus 
 \big( x_j^{d-1}\CC[x_j^d] \otimes x_j \big) & \text{ if }i=0,\\
\big( x_j^{d-i} \CC[x_j^d] \otimes 1  \big) \oplus 
 \big( x_j^{d-1-i} \CC[x_j^d] \otimes x_j \big) 
  & \text{ if }1 \leq i \leq d-1.
 \end{cases}
 $$
 
 \subsection*{The $i=0$ case}
 The space  $( \CC[x_j^d] \otimes 1  ) \oplus 
 ( x_j^{d-1}\CC[x_j^d] \otimes x_j )$ is the tensor product of a symmetric algebra $\CC[x_j^d]$
 on the generator $x_j^d$  with an exterior algebra on the generator ${x_j^{d-1} \otimes x_j}$.
Consequently,  the $(\ZZ/d\ZZ)^n$-fixed space~\eqref{relative-invariants-of-epsilon-power} is 
a symmetric algebra $\CC[x_1^d,\ldots,x_n^d]$ tensored with the exterior algebra on the generators
$\{ x_j^{d-1} \otimes x_j\}_{j=1}^n$.  Since the residual action of $\symm_n$ permutes the subscripts as
 usual, one concludes that
 the last expression in~\eqref{factor-rewriting} is $P_{\symm_n}(\chi^\lambda; q^d, tq^{d-1})$
  for $i=0$.

 \subsection*{ The $1 \leq i \leq d-1$ case}
We proceed as in the $i=0$ case.  First enlarge coefficients from 
 $\CC[x_i]$ to $\Frac(\CC[x_i])$.  Then  $( x_j^{d-i} \CC[x_j^d] \otimes 1  ) \oplus 
 ( x_j^{d-1-i} \CC[x_j^d] \otimes x_j )$
  is a free module of rank $1$ with basis element
 $x_j^{d-i}$ over the subalgebra of $\Frac(\CC[x_i]) \otimes \wedge  \CC{x_j}$
which is the tensor product of a symmetric algebra $\CC[x_j^d]$
 on generator $x_j^d$  with an exterior algebra on generator ${x_j^{-1} \otimes x_j}$.
 Consequently,  the $(\ZZ/d\ZZ)^n$-fixed space~\eqref{relative-invariants-of-epsilon-power} is 
 a free module of rank $1$ with basis element
 $(x_1 \cdots x_n)^{d-i}$ over the subalgebra of $\Frac(S) \otimes \wedge V^*$
given as the tensor product of the symmetric algebra $\CC[x_1^d,\ldots,x_n^d]$
  with the exterior algebra on the generators $\{x_j^{-1} \otimes x_j\}_{j=1}^n$.
  Since $\symm_n$ still permutes the subscripts, the last expression in \eqref{factor-rewriting} 
  is just $q^{n(d-i)} P_{\symm_n}(\chi^\lambda; q^d, tq^{-1})$.
 
 \vskip.1in
 We substitute these expressions for each $i$  in 
 \eqref{Frobenius-reciprocity-simplification} to obtain the product on the right in the proposition.
 \end{proof}

 \begin{remark}
   For the sake of the reader wishing to compare notation with that of Koike \cite{Koike}, note that the group we call $W_n=G(d,1,n)$ is his group $G_{n,d}$, and the group
   that we call $G(de,e,n)$ below is his group $G_{n,d,e}$.  Also, because he works
   with $S(V) \otimes \wedge(V)$ rather than $S(V^*) \otimes \wedge(V^*)$,
   his Theorem 1 calculates what we denote here by
   $P_{W_n}(\left( \cchi^{\underline{\lambda}} \right)^*;q,t)$, where
   one can identify the contragredient representation
   of $\cchi^{\underline{\lambda}}=\cchi^{(\lambda^{(0)}, \lambda^{(1)},\ldots,\lambda^{(r-1)})} $ 
   with
   $
   \left( \cchi^{\underline{\lambda}} \right)^*=\cchi^{(\lambda^{(0)}, \lambda^{(r-1)},\ldots,\lambda^{(2)},\lambda^{(1)})}.
   $
 \end{remark}

\begin{corollary}
\label{mainthmforG(d,1,n)}
The conclusion of Theorem~\ref{maintheorem} holds for the Shephard-Todd family $G(d,1,n)$ with $d \geq 2$.
\end{corollary}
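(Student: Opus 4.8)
The plan is to combine an explicit description of $\wedge^r V$ as an irreducible $W_n$-module with Koike's formula (Theorem~\ref{Kirillov-Pak-generalized}) and the hook-content formula (Theorem~\ref{GNS-KP-Molchanov-formula}), and then match the outcome against Theorem~\ref{maintheorem}${}^\prime$ through \eqref{Hilb-and-P-relation}. First I would pin down the $d$-multipartition indexing $\wedge^r V$. Restricting to the diagonal subgroup $(\ZZ/d\ZZ)^n$, the module $\wedge^r V$ decomposes as $\bigoplus_{|S|=r}\bigotimes_{j\in S}\epsilon_j$ over the $r$-subsets $S$ of $\{1,\ldots,n\}$, where $\epsilon_j$ is the $j$th coordinate character; the group $\symm_n$ permutes these summands transitively with stabilizer $\symm_r\times\symm_{n-r}$, and it acts on each one-dimensional summand $y_{j_1}\wedge\cdots\wedge y_{j_r}$ through the sign of the induced permutation of $S$. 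Hence $\wedge^r V$ is indexed by the multipartition $\big((n-r),(1^r),\emptyset,\ldots,\emptyset\big)$, with the trivial partition $(n-r)$ in slot $0$ and the column $(1^r)$ in slot $1$. By the duality recorded in the Remark following Theorem~\ref{Kirillov-Pak-generalized}, its contragredient $\left(\cchi_{\wedge^r V}\right)^*$ is indexed by $\underline{\mu}=\big((n-r),\emptyset,\ldots,\emptyset,(1^r)\big)$, with $(1^r)$ now in slot $d-1$; as a sanity check, $r=n$ recovers the characters $\det$ and $\det^{-1}$.

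Next I would feed $\underline{\mu}$ into Koike's formula. Because only slots $0$ and $d-1$ are nonempty, all but two factors are trivial and
$$
P_{W_n}\!\left(\left(\cchi_{\wedge^r V}\right)^*;q,t\right)
= P_{\symm_{n-r}}\!\left(\chi^{(n-r)};q^d,tq^{d-1}\right)\cdot q^{r}\cdot P_{\symm_{r}}\!\left(\chi^{(1^r)};q^d,tq^{-1}\right),
$$
where $q^r$ is the shift $q^{n_{d-1}(d-(d-1))}$. I would then evaluate the two type-$A$ factors by Theorem~\ref{GNS-KP-Molchanov-formula}: for the single row $(n-r)$ one has $n(\lambda)=0$, contents $0,1,\ldots,n-r-1$, and hooklengths $1,\ldots,n-r$; for the single column $(1^r)$ one has $n(\lambda)=\binom r2$, contents $0,-1,\ldots,-(r-1)$, and hooklengths $1,\ldots,r$.

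The final step is bookkeeping. Recalling that $G(d,1,n)$ has exponents $e_i=id-1$ and coexponents $e_i^*=1+(i-1)d$, so that its gap is $a=d$ and its smallest exponent is $e_1=d-1$, the specialization $(q^d,tq^{d-1})$ turns the row factor into $\prod_{i=1}^{n-r}(1+tq^{e_i})\big/\prod_{j=1}^{n-r}(1-q^{dj})$, while $(q^d,tq^{-1})$ turns the column factor into $q^{d\binom r2}\prod_{i=1}^{r}(1+tq^{-e_i^*})\big/\prod_{j=1}^{r}(1-q^{dj})$; the prefactors combine to $q^{r+a\binom r2}$. Rewriting the two numerator products as the $q$-Pochhammer symbols $(-tq^{e_1};q^a)_{n-r}$ and $(-tq^{-1};q^{-a})_r$, and the joint denominator as $(q^{e_1+1};q^a)_n\big/\qbin{n}{r}{q^a}$, reproduces exactly the right-hand side of Theorem~\ref{maintheorem}${}^\prime$, which by \eqref{Hilb-and-P-relation} equals $\Hilb\big((S\otimes\wedge V^*\otimes\wedge^r V)^{W_n},q,t\big)$; summing against $s^r$ yields Theorem~\ref{maintheorem}. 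I expect the only genuinely delicate point to be the first step: correctly identifying the multipartition of $\wedge^r V$---in particular the sign twist forcing a column $(1^r)$ rather than a row $(r)$---and its dual, since an error there would misalign the cell contents with the coexponents and spoil the match, whereas everything after the reduction is routine $q$-Pochhammer algebra.
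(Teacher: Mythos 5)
Your proposal is correct and follows essentially the same route as the paper: identify $(\cchi_{\wedge^r V})^*$ with the multipartition $\big((n-r),\varnothing,\ldots,\varnothing,(1^r)\big)$, apply Koike's formula (Theorem~\ref{Kirillov-Pak-generalized}) to reduce to two type-$A$ hook-content factors, and match the resulting product with Theorem~\ref{maintheorem}${}^\prime$ via \eqref{Hilb-and-P-relation} using $e_1=d-1$, $a=d$. Your justification of the multipartition indexing (the restriction to $(\ZZ/d\ZZ)^n$ and the sign twist forcing $(1^r)$ rather than $(r)$) is in fact more detailed than the paper's "one can check," and the $q$-Pochhammer bookkeeping is accurate.
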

\begin{proof}
  According to
  \eqref{Hilb-and-P-relation}, we should compute $P_W((\cchi_{\wedge^r V})^*;q,t)$
  for $W=G(d,1,n)$ acting on $V=\CC^n$ and $0 \leq r \leq n$.
%$$
%\Hilb(S(V^*)\ot \wedge V^* \ot \wedge^r V,\, q, t) = P_{W}((\cchi_{\wedge^r V})^*;q,t)
%\quad\text{ for } 0\leq r \leq n\, .
%$$
One can check that $(\cchi_{\wedge^r V})^*$ is the $W$-irreducible character $\chi^{\underline{\lambda}}$
for $\underline{\lambda}=(\lambda^{(0)},
\ldots, \lambda^{(d-1)})$ where
$$
\begin{aligned}
\lambda^{(0)}=(n-r),\quad
 \lambda^{(1)}=\lambda^{(2)}=\cdots=\lambda^{(d-2)}=\varnothing,\quad
 \text{ and }\ \ 
 \lambda^{(d-1)}=(1^r).
 \end{aligned}
$$
Hence  Theorem~\ref{Kirillov-Pak-generalized} together with
Corollary~\ref{type-A-hook-content-formula} gives
$$
\begin{aligned}
P_{G(d,1,n)}(\left(\cchi_{\wedge^r V}\right)^*; q,t) 
&=P_{\symm_{n-r}}(\chi^{(n-r)}; q^d,tq^{d-1}) 
 \ q^{r}\ P_{\symm_r}(\chi^{(1^r)}; q^d,tq^{-1}) \\
&=  \prod_{ x \in (n-r) } \frac{1+tq^{d-1+dc(x)}}{1-q^{dh(x)}} 
\ q^{r} \ q^{d\binom{r}{2}}  \prod_{ x \in (1^r) } \frac{1+tq^{-1+dc(x)}}{1-q^{dh(x)}} \\
&= q^{r+d\binom{r}{2}}  \
  \prod_{ i=1}^{n-r} \frac{1+tq^{d-1+d(i-1)}}{1-q^{di}} 
  \ \prod_{i=1}^r \frac{1+tq^{-1-d(i-1)}}{1-q^{di}} \\
  &= q^{r+d\binom{r}{2}} \ \qbin{n}{r}{q^d}  
     \ \frac{(-tq^{d-1};q^d)_{n-r} \ (-tq^{-1};q^{-d})_n}{(q^d;q^d)_n}\\
%  &=e_r(q^{d-1},q^{2d-1},\ldots,q^{nd-1}) \cdot \frac{(-tq^{d-1};q^d)_{n-r}  (-tq^{-1};q^{-d})_n}{(q^d;q^d)_n}
\end{aligned}
$$
using \eqref{q-binomial-definition} and \eqref{principal-specialization-of-elementary}
in the second and third equalities.  As
$G(d,1,n)$ has rank $n$ and 
exponents $$(e_1,\ldots,e_{n})=(d-1,2d-1,\ldots,nd-1),$$
its smallest exponent is $e_1=d-1$ and the gap between exponents
is $a=d$.  Hence this last expression agrees with 
that in Theorem~\ref{maintheorem}${}^\prime$, which is equivalent to Theorem~\ref{maintheorem}.
\end{proof}

%%%%
\subsection*{Koike's formula for $G(de,e,n)$}

Koike also generalized Theorem~\ref{Kirillov-Pak-generalized}
from the wreath product groups $G(d,1,n)$
to the entire infinite family $G(de,e,n)$ in Shephard and Todd's classification
of irreducible complex reflection groups \cite{ShephardTodd}.
Here $G(de,e,n)$ for positive integers $d,e,n$ is the
the kernel of the degree one character $\epsilon^d: G(de,1,n) \rightarrow \CC^\times$,
where $\epsilon$ was defined in \eqref{definition-of-epsilon-character}.
In other words, $G(de,e,n)$ is the group of monomial $n \times n$ matrices
whose nonzero entries are all $(de)^{th}$ roots-of-unity and
for which the product of the nonzero entries is a $d^{th}$
root-of-unity.
Although the group $G(de,e,n)$ is only coincidental when $e=1$, we find it worthwhile
to state his result and then use it to compute the Hilbert series
of $\left( S \otimes \wedge V^* \otimes \wedge^r V\right)^{G(de,e,n)}$, so that we can see how it would
differ from the form in Theorem~\ref{maintheorem} when $e \geq 2$.

First we recall the parametrization of irreducible $G(de,e,n)$-representations; see, e.g., \cite[\S2]{Koike}.
Given an irreducible $G(de,1,n)$-character $\cchi^{\underline{\lambda}}$
corresponding to a multipartition $\underline{\lambda}=(\lambda^{(0)},\lambda^{(1)},\ldots,\lambda^{(de-1)} )$ of $n$,
consider the superscripts $i$ in $\lambda^{(i)}$ as taken modulo $de$,
so that $\lambda^{(i+de)}=\lambda^{(i)}$ for all integers $i$.
Then the restriction of $\cchi^{\underline{\lambda}}$ from $G(de,1,n)$ to $G(de,e,n)$
depends only upon the orbit of $\underline{\lambda}$
under the operation that replaces $\lambda^{(i)}$ by $\lambda^{(i+d)}$ for
all $i$.  If one fixes a representative $\underline{\lambda}$ of this orbit
and defines a positive integer
$$
\mu(\underline{\lambda}):=\min\{m \geq 1: \lambda^{(i)}=\lambda^{(i+dm)} \text{ for all }i\},
$$
then $\cchi^{\underline{\lambda}}$ decomposes upon restriction to $G(de,e,n)$
into $e/\mu(\underline{\lambda})$ inequivalent $G(de,e,n)$-irreducible
characters, and each $G(de,e,n)$-irreducible arises once
in this way.  Koike's result may then be stated as follows.

\begin{theorem} \cite[Theorem 2]{Koike}
  \label{Koike-theorem}
  For a multipartition $\underline{\lambda}$ of $n$ parametrizing
  an irreducible character $\cchi^{\underline{\lambda}}$ of $G(de,1,n)$,
  let $\hat{\cchi}^{\underline{\lambda}}$ denote any $G(de,e,n)$-irreducible
  constituent of the restriction to $G(de,e,n)$.
  Then 
$$
P_{G(de,e,n)}(\hat{\cchi}^{\underline{\lambda}};q,t)
= \sum_{v=0}^{\mu(\underline{\lambda})-1}
P_{\symm_{n_{dv}}}(\chi^{\lambda^{(dv)}};q^{de},tq^{de-1})
  \cdot \prod_{i=1}^{de-1} q^{n_{dv+i} \cdot (de-i)}
    \cdot P_{\symm_{n_{i+dv}}}(\chi^{\lambda^{(i+dv)}};q^{de},tq^{-1}).
    $$
    In particular, the answer is
    independent of the chosen irreducible constituent $\hat{\cchi}^{\underline{\lambda}}$ 
    and depends only on $\chi^{\underline{\lambda}}$.
\end{theorem}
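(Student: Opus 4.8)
The plan is to realize $H:=G(de,e,n)$ as a normal subgroup of $G:=G(de,1,n)$ of index $e$, namely $H=\ker(\epsilon^d)$ for the linear character $\epsilon$ of~\eqref{definition-of-epsilon-character}, so that $G/H$ is cyclic of order $e$ and its linear characters are the restrictions to $G$ of $\{\epsilon^{dj}\}_{j=0}^{e-1}$. I would then use the two facts already recorded above: that $\mathrm{Res}^G_H\,\cchi^{\underline\lambda}$ is multiplicity-free with exactly $e/\mu(\underline\lambda)$ distinct irreducible constituents $\hat\cchi_1,\dots,\hat\cchi_{e/\mu}$, and that (by Clifford's theorem) these constituents form a single $G$-conjugacy orbit. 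The engine of the whole argument is that the graded class function $\Psi:=\cchi_{S\ot\wedge V^*}(q,t)$ is, by its very definition, a genuine $G$-class function, hence invariant under $G$-conjugation.

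First I would prove the ``in particular'' assertion that $P_H(\hat\cchi_i;q,t)$ is independent of $i$. For $g\in G$ and $w\in H$, the substitution $w\mapsto g^{-1}wg$ together with the $G$-invariance of $\Psi$ gives $\langle\,{}^g\hat\cchi_i,\Psi\,\rangle_H=\langle\,\hat\cchi_i,\Psi\,\rangle_H$; since $G$ permutes the constituents transitively, all of them return the same value of $P_H$. Consequently, because the $e/\mu$ equal constituents sum to the restriction, $P_H(\hat\cchi^{\underline\lambda};q,t)=\tfrac{\mu}{e}\,\langle\,\mathrm{Res}^G_H\,\cchi^{\underline\lambda},\,\Psi\,\rangle_H$.

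Next I would unfold the $H$-average into a $G$-average. Since $\sum_{j=0}^{e-1}\epsilon^{dj}(w)$ equals $e$ for $w\in H$ and $0$ otherwise, inserting it into the pairing over $H$ and using $|G|=e\,|H|$ converts $\sum_{w\in H}$ into $\sum_{j=0}^{e-1}$ of a pairing over $G$ against $\epsilon^{dj}\ot\cchi^{\underline\lambda}$. Because tensoring by $\epsilon^{d}$ implements exactly the shift $\tau\colon\lambda^{(i)}\mapsto\lambda^{(i+d)}$ on multipartitions (the operation defining $\mu(\underline\lambda)$), this reads
$$\langle\,\mathrm{Res}^G_H\,\cchi^{\underline\lambda},\,\Psi\,\rangle_H=\sum_{j=0}^{e-1}P_{G}\big(\cchi^{\tau^j\underline\lambda};q,t\big).$$
By definition of $\mu=\mu(\underline\lambda)$ we have $\tau^\mu\underline\lambda=\underline\lambda$, so the $e$ summands collapse into $e/\mu$ identical blocks and the prefactor $\tfrac{\mu}{e}$ cancels, yielding
$$P_{G(de,e,n)}(\hat\cchi^{\underline\lambda};q,t)=\sum_{v=0}^{\mu-1}P_{G(de,1,n)}\big(\cchi^{\tau^v\underline\lambda};q,t\big).$$
(The orientation of the shift is immaterial here, since $j$ ranges over the whole cyclic group $G/H$.)

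Finally I would expand each term on the right by Theorem~\ref{Kirillov-Pak-generalized}, applied to the wreath product $G(de,1,n)=\symm_n\ltimes(\ZZ/de\ZZ)^n$: the shift $\tau^v$ moves $\lambda^{(dv+i)}$ into slot $i$, so slot $0$ contributes the factor with argument $tq^{de-1}$ while each slot $i\ge 1$ contributes $q^{\,n_{dv+i}(de-i)}$ together with the factor with argument $tq^{-1}$, which is precisely the $v$-th summand of the claimed formula. The step I expect to be the main obstacle is the first one: pinning down that the twist by $\epsilon^{d}$ is the index-shift $\tau$ with the correct direction, and that the $G$-conjugacy orbit of constituents coincides with this twisting orbit so that the count of $e/\mu$ equal constituents meshes cleanly with the collapse of the $j$-sum. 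Once the $G$-invariance of $\Psi$ and this index bookkeeping are secured, what remains is the elementary identity $\sum_{j=0}^{e-1}\epsilon^{dj}=e\cdot[\,\cdot\in H\,]$ and a direct appeal to Theorem~\ref{Kirillov-Pak-generalized}.
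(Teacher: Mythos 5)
Your argument is correct, but note that the paper offers no proof of Theorem~\ref{Koike-theorem} at all---it is quoted directly from \cite{Koike}---so your derivation is necessarily a different route: a self-contained deduction of the $G(de,e,n)$ formula from the $G(de,1,n)$ formula of Theorem~\ref{Kirillov-Pak-generalized}, which the paper \emph{does} prove. The three pillars all check out. First, the class function $w \mapsto \det(1+tw)/\det(1-qw)$ on $H=G(de,e,n)$ is the restriction of a $G=G(de,1,n)$-class function, so it is constant on $G$-conjugacy classes; combined with Clifford's theorem (the constituents of $\mathrm{Res}^G_H \cchi^{\underline\lambda}$ form a single $G$-orbit, multiplicity-free since $G/H$ is cyclic of order $e$), this gives $P_H(\hat{\cchi}^{\underline\lambda}) = \tfrac{\mu}{e}\langle \mathrm{Res}^G_H\cchi^{\underline\lambda},\Psi\rangle_H$ and, as a byproduct, the ``in particular'' clause. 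Second, the orthogonality relation $\tfrac1e\sum_{j=0}^{e-1}\epsilon^{dj}(w)=[w\in H]$ converts the $H$-average into $\sum_{j=0}^{e-1} P_G(\epsilon^{dj}\otimes\cchi^{\underline\lambda})$, and the projection formula applied to the induced-character description of $\cchi^{\underline\lambda}$ shows $\epsilon^d\otimes\cchi^{\underline\lambda}=\cchi^{\tau^{\pm 1}\underline\lambda}$; as you say, the sign of the shift is irrelevant because the sum runs over the full cyclic group, and the $e$ terms collapse into $e/\mu(\underline\lambda)$ identical blocks that cancel the prefactor. Third, expanding each $P_G(\cchi^{\tau^v\underline\lambda})$ by Theorem~\ref{Kirillov-Pak-generalized} (with $d$ there replaced by $de$) reproduces the $v$-th summand verbatim. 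What this buys is worth recording: it shows that Theorem~\ref{Koike-theorem} requires no new computation beyond the wreath-product case, whereas Koike's original treatment of $G(de,e,n)$ is carried out directly; it would be a reasonable addition to the paper in the same spirit as the authors' streamlined proof of Theorem~\ref{Kirillov-Pak-generalized}.
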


\noindent
Note that for $e=1$, every $\underline{\lambda}$
has $\mu(\underline{\lambda})=1$, so that the sum has only the $v=0$ term,
recovering Theorem~\ref{Kirillov-Pak-generalized}.

\begin{corollary}
\label{Koike-theorem-wedge-corollary}
  Let $W=G(de,e,n)$ with $d,e \geq 1$ and $n, de \geq 2$.
  Then the various Hilbert series 
$$
\Hilb((S\ot \wedge V^*\ot \wedge^r V)^W, q,t)=
P_W((\cchi_{\wedge^r V})^*; q,t)
$$
for $0 \leq r \leq n$ have these formulas:
\begin{itemize}
\item When $r=0$, it is
$$  
  \frac{(-tq^{de-1};q^{de})_{n-1} (1+tq^{dn-1})}
  {(q^{de};q^{de})_{n-1} (1-q^{dn})}\, .
  $$
    \item When $1 \leq r \leq n-1$ and $d \geq 2$, it is
$$  \qquad\quad
  q^{de\binom{r}{2}+r}\ 
   \frac{(-tq^{-1};q^{-de})_r (-tq^{de-1};q^{de})_{n-1-r}} 
  {(q^{de};q^{de})_r (q^{de};q^{de})_{n-r}(1-q^{dn})} 
  \left(
    1-q^{den}+tq^{-1}\big(q^{de(n-r)}(1-q^{dn})+q^{dn}-q^{den}\big)
  \right) \, .
  $$
      \item When $1 \leq r \leq n-1$ and $d=1$, it is
$$
\begin{aligned}        \qquad\quad
  q^{e\binom{r}{2}}\
   \frac{(-tq^{-1};q^{-e})_{r-1} (-tq^{e-1};q^{e})_{n-1-r}} 
  {(q^{e};q^{e})_{r} (q^{e};q^{e})_{n-r} (1-q^n)}
  &\left( q^r (1+tq^{(n-r)e-1}) (1+tq^{-(r-1)e-1}) (1-q^n) \right. \\
  & \qquad + q^{(n-r)(e-1)-1} (1+tq^{e-1}) (1-q^n) (q+t) \\
  & \qquad \left. + q^{r-1} (1+tq^{-(r-1)e-1}) (q+t)(q^n-q^{n(e-1)}) \right) \, .
\end{aligned}
$$
\item When $r=n$, it is
  $$
  q^{de\binom{n-1}{2}+n-1} \
  \frac{(-tq^{-1};q^{-de})_{n-1} }
  {(q^{de};q^{de})_{n-1} (1-q^{dn})} \cdot
  \begin{cases} (q^{de(n-1)+1}+t) & \text{ if }d \geq 2,\\
    (q^{(n-1)(e-1)}+t) & \text{ if }d=1\ .
  \end{cases}
  $$
\end{itemize}
\end{corollary}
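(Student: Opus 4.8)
The plan is to combine the reduction \eqref{Hilb-and-P-relation} with Koike's formula, Theorem~\ref{Koike-theorem}. First I would identify, exactly as in the proof of Corollary~\ref{mainthmforG(d,1,n)} (with $d$ replaced by $de$), the irreducible character $(\cchi_{\wedge^r V})^*$ as the restriction to $G(de,e,n)$ of the $G(de,1,n)$-character $\cchi^{\underline{\lambda}}$ indexed by the multipartition with $\lambda^{(0)}=(n-r)$, $\lambda^{(de-1)}=(1^r)$, and all other components empty. Since the shift-by-$d$ operation moves the two distinguished positions $0$ and $de-1$ and only returns them to their starting spots after $e$ steps, the period is $\mu(\underline{\lambda})=e$, so the sum in Theorem~\ref{Koike-theorem} runs over $v=0,1,\dots,e-1$. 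For each $v$ I would track which of the (at most two) nonempty components lands in the distinguished slot $dv$, contributing a factor with parameter $tq^{de-1}$, versus inside the product over $i=1,\dots,de-1$, contributing a factor with parameter $tq^{-1}$, reading off the accompanying power of $q$ from the exponent $n_{dv+i}(de-i)$.

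Each symmetric-group factor is then a single-row or single-column hook-content evaluation from Theorem~\ref{GNS-KP-Molchanov-formula}; with $Q=q^{de}$ these are
$$
P_{\symm_{n-r}}(\chi^{(n-r)};Q,T)=\frac{(-T;Q)_{n-r}}{(Q;Q)_{n-r}}
\qquad\text{and}\qquad
P_{\symm_{r}}(\chi^{(1^r)};Q,T)=Q^{\binom{r}{2}}\frac{(-T;Q^{-1})_{r}}{(Q;Q)_{r}},
$$
evaluated at the two relevant parameters $T=tq^{de-1}$ and $T=tq^{-1}$. For $d\ge 2$ the distinguished position $dv$ is never congruent to $de-1$ modulo $de$, so only the $v=0$ term carries the row $(n-r)$ with parameter $tq^{de-1}$ (and this term reproduces exactly the $G(de,1,n)$ answer of Corollary~\ref{mainthmforG(d,1,n)}), whereas each remaining term $v=1,\dots,e-1$ contributes the same product of hook-content factors, now both with parameter $tq^{-1}$, scaled by $q^{ndv}$. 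Summing the geometric series $\sum_{v=1}^{e-1}q^{ndv}=\tfrac{q^{dn}-q^{den}}{1-q^{dn}}$ and collecting over the common denominator $1-q^{dn}$ should collapse the two row-factors $(-tq^{de-1};q^{de})_{n-r}$ and $(-tq^{-1};q^{de})_{n-r}$ into their shared product $(-tq^{de-1};q^{de})_{n-1-r}$ times the displayed quadratic-in-$t$ numerator, yielding the $1\le r\le n-1$, $d\ge2$ case. The extreme cases $r=0$ and $r=n$ are the degenerate versions with a single nonempty component, handled identically; as a sanity check, the $r=0$ formula can be matched against Solomon's specialization \eqref{Solomon-consequence} using the degrees $(de,2de,\dots,(n-1)de,dn)$ of $G(de,e,n)$.

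The genuinely different case is $d=1$, and I expect its bookkeeping to be the main obstacle. Here the distinguished positions $dv=v$ now include $v=e-1\equiv de-1$, so the column $(1^r)$ can itself occupy the distinguished slot and thereby acquire the parameter $tq^{e-1}$ rather than $tq^{-1}$. This forces a three-fold split of the sum over $v$: the term $v=0$ (row $(n-r)$ distinguished, column inside the product), the term $v=e-1$ (column $(1^r)$ distinguished, row inside the product), and the middle terms $1\le v\le e-2$ (both components inside the product), which is precisely the origin of the three summands in the stated $d=1$ formula. The delicate point is keeping straight, for every term, which parameter ($tq^{e-1}$ or $tq^{-1}$) attaches to each hook-content factor and which power of $q$ it carries, and then verifying that the three contributions — after one finite geometric sum and one algebraic identity clearing the common denominator $1-q^{n}$ — recombine into the single displayed expression. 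Once this combinatorial bookkeeping is set up correctly, the remaining manipulations are routine.
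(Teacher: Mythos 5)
Your proposal is correct and follows essentially the same route as the paper's own (sketched) proof: identify $(\cchi_{\wedge^r V})^*$ as the restriction of the $G(de,1,n)$-character for $\underline{\lambda}=((n-r),\varnothing,\ldots,\varnothing,(1^r))$ with $\mu(\underline{\lambda})=e$, apply Theorem~\ref{Koike-theorem}, track for each $v$ which nonempty component occupies the distinguished slot (yielding the $d\geq 2$ versus $d=1$ dichotomy and the three-fold split when $d=1$), sum the geometric series, and simplify. The only quibble is that for $v\geq 1$, $d\geq 2$ the scaling is $q^{ndv+r}$ rather than $q^{ndv}$ (the extra $q^r$ being a common factor), which does not affect the argument.
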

\begin{proof}
  We only sketch the somewhat tedious proofs, which are of the same nature as
  those for Corollary~\ref{mainthmforG(d,1,n)}.
  
  The $G(de,e,n)$-representation $\left(\wedge^r V\right)^*$ is
  the restriction of the $G(de,1,n)$-representation $\left(\wedge^r V\right)^*$,
  and the latter has character $\cchi^{\underline{\lambda}}$ where
  $\underline{\lambda}=((n-r),\varnothing, \varnothing,\ldots,\varnothing,(1^r))$.
  Thus one can apply Theorem~\ref{Koike-theorem} to compute
  $P_{G(de,e,n)}(\hat{\cchi}^{\underline{\lambda}}; q,t)$.  In this case,
  $\mu(\underline{\lambda})=e$, and so the sum in Theorem~\ref{Koike-theorem} always has $e$ terms.
  
  When $r=0$, since $\underline{\lambda}$ has only one nonempty component $\lambda^{(0)}=(n)$,
  each summand in Theorem~\ref{Koike-theorem} for $v=0,1,2,\ldots,e-1$
  has only one non-unit factor:  the $v=0$ summand is the factor
  $P_{\symm_n}(\cchi^{(n)};q^{de},tq^{de-1})$, and each of the $v=1,2,\ldots,e-1$ summands
  is the factor indexed by $i=d(e-v)$ in the product.
  After pulling out common factors, one can sum the geometric series over $v=1,2,\ldots,e-1$,
  and some simplification then leads to the formula given in the corollary.
  The proof for $r=n$ is extremely similar.

  When $1 \leq r \leq n-1$, since $\underline{\lambda}$ has two nonempty components $\lambda^{(0)}=(n-r)$
  and $\lambda^{(de-1)}=(1^r)$, each summand for Theorem~\ref{Koike-theorem} for $v=0,1,2,\ldots,e-1$
  is a product of two non-unit factors.
  \begin{itemize}
    \item The $v=0$ summand is the factor in front of the product
      times the $i=1$ factor within the product.
    \item If $d=1$, the $v=1$ summand is similarly the factor in front times the
  factor indexed by $i=e-1$ within the product, while each
  of the $v=2,3,\ldots,e-1$ summands is the product of the two factors for $i=e-v, e-v+1$
  in the product.
\item If $d \geq 2$ so that $e-1 < de-1$, each of the $v=1,2,\ldots,e-1$ summands
  is the product of the two factors for $i=d(e-v), d(e-v)+1$ in the product.
\end{itemize}
In each case, one pulls out common factors, sums a geometric series,
and simplifies to obtain the formula.  
\end{proof}

\begin{remark}
  \label{G(de,e,n)-answer-comparison-remark}
One can compare the formulas for the Hilbert series in Corollary~\ref{Koike-theorem-wedge-corollary}
to see how they differ from the formulas in Theorem~\ref{maintheorem}.  Note that $G(de,e,n)$ has
\begin{equation}
\label{G(de,e,n)-exponents-coexponents}
\begin{aligned}
(d_1,d_2,\ldots,d_n)&= (de,2de,\ldots,(n-1)de,dn)\, ,\\
(e_1,e_2,\ldots,e_n)&= (de-1,2de-1,\ldots,(n-1)de-1,dn-1)\, ,\\
(e^*_1,e^*_2,\ldots,e^*_n)&=
 \begin{cases}
   (1,1+de,1+2de,\ldots,1+(n-2)de,1+(n-1)de) & \text{ if }d \geq 2,\\
   (1,1+e,1+2e,\ldots,1+(n-2)e,(n-1)(e-1)) & \text{ if }d=1\, .\\
 \end{cases}
 \end{aligned}
\end{equation}
It is then not hard to check that the cases $r=0$ and $r=n$ of
Corollary~\ref{Koike-theorem-wedge-corollary} agree with the formula given in
Theorem~\ref{maintheorem}, as predicted by Solomon's formula \eqref{Solomon-consequence} for $r=0$
and Theorem~\ref{Shepler-product} for $r=n$.

For $1 \leq r \leq n-1$, 
one can check using~\eqref{G(de,e,n)-exponents-coexponents} that the formula in Theorem~\ref{maintheorem} would assert for $d \geq 2$
that 
$$
\Hilb((S\ot \wedge V^*\ot \wedge^r V)^W, q,t)=
  q^{de\binom{r}{2}+r} \frac{(-tq^{-1};q^{-de})_r (-tq^{de-1};q^{de})_{n-1-r}(1+tq^{d \min\{(n-r)e,n\}-1})} 
  {(q^{de};q^{de})_r (q^{de};q^{de})_{n-r}}\, .
  $$
In general, this formula is {\it incorrect}---one can check that it disagrees with the corresponding
  expression in Corollary~\ref{Koike-theorem-wedge-corollary} except when $e=1$,
  namely when $G(de,e,n)=G(d,1,n)$, a coincidental group.

  Similarly, for $d=1$, so that  $W=G(e,e,n)$, one can check using~\eqref{G(de,e,n)-exponents-coexponents}  that the formula in Theorem~\ref{maintheorem} would assert that
$$
\begin{aligned}
  & \Hilb((S\ot \wedge V^*\ot \wedge^r V)^W, q,t)\\
  &=
  q^{e\binom{r}{2}+r} \ \cdot\ 
  \frac{(-tq^{-1};q^{-e})_{r-1} (-tq^{e-1};q^{e})_{n-1-r}}
         {(q^{e};q^{e})_{r-1} (q^{e};q^{e})_{n-r}} \\
         & \qquad \qquad \qquad
         \cdot\ \frac{ (1+tq^{\min\{(n-r)e,n\}-1})
                  (1+tq^{-\min\{1+(r-1)e,(n-1)(e-1)\}})}
                {(1-q^{re})(1-q^n)} \cdot
    \left(
     1-q^{ne-re}+q^{ne-re-n}-q^{ne-n}
   \right)\, .
  \end{aligned}
  $$
  In general, this formula is also {\it incorrect}---one 
  can check it disagrees with the $d=1$ case of Corollary~\ref{Koike-theorem-wedge-corollary} except
  when $r=1$, in which case both formulas agree with  \eqref{Reiner-Shepler}
  since $G(e,e,n)$ is a duality group.
\end{remark}

%%%%%%%%%%%%%%%%%%%%%%%%%%%%%%%%%%%%%%%%%%%%%%%%%%%
%%%%%%%%%%%%%%%%%%%%%%%%%%%%%%%%%%%%%%%%%%%%%%%%%%%
%%%%%%%%%%%%%%%%%%%%%%%%%%%%%%%%%%%%%%%%%%%%%%%%%%%
\section{Conjectured explicit basis}
\label{conjecture}
Here we strengthen Theorem~\ref{maintheorem}
to conjecture an explicit basis of the space
of invariants $(\SSS \otimes \wedge V^* \otimes \wedge V)^W$,
extending ideas of~\cite{Shepler05}. In fact, for all of the remaining coincidental groups $W$ (not type $A$ nor $G(d,1,n)$), we verify this
stronger conjecture in later sections to complete the verification of Theorem~\ref{maintheorem}.
Let us view
$$
M_r :=(S(V^*) \otimes \wedge V^* \otimes \wedge^r V)^W
$$
as a module over the exterior algebra
$(S(V^*) \otimes \wedge V^*)^W = \wedge_{S(V^*)^W}\{ df_1,\ldots,df_n\}$
via multiplication into the first two tensor factors.  
We require notation for letting a derivation act as an operator on $M_r$ by taking
partial derivatives of polynomial coefficients:
Given any derivation
$\theta =\sum_{j=1}^n h_j(x_1,\ldots, x_n) \otimes y_j$ in $S(V^*) \otimes V$, 
let
$$
\tilde{\theta}:S(V^*)\ot \wedge^k V^*\ot \wedge V\longrightarrow
S(V^*)\ot \wedge^{k+1} V^*\ot \wedge V
$$ 
be the differential operator defined by
$$
f \otimes \eta \otimes \eta'
\qquad{\longmapsto} \qquad
\sum_{j=1}^n \
\overline{h}_j\left( \tfrac{\del}{ \del x_1},\ldots, \tfrac{\del}{\del x_n} \right)
(f) \otimes x_j \wedge \eta \otimes \eta',
$$
with bar indicating that complex coefficients are
conjugated.
Here, each $x_i^m$ in the polynomial $h_j$ is replaced by the iterated partial derivative $\del^m/\del x_i^m$
and the resulting operator is applied to $f$.

Note that in the case where $\theta$ is the Euler derivation $\theta_E=\sum_{j=1}^n x_i \otimes y_j$
in $S(V^*)\ot V$,
the operator $\tilde{\theta}_E$ restricts to the
the usual exterior derivative $d: S(V^*)\mapsto S(V^*)\ot V^*$.
Restricting $\tilde{\theta}_E$ merely to a map $S(V^*) \otimes \wedge^0 V^* \otimes V \rightarrow S \otimes \wedge^1 V^* \otimes V$
gives the operator $\psi \mapsto d \psi$ from~\cite{ReinerShepler}.  We can now state our conjecture.

\begin{conjecture}
\label{structural-Molchanov}
For $W$ a coincidental reflection group, one may choose basic invarints $f_1,\ldots,f_n$ and 
basic derivations $\theta_1,\ldots,\theta_n$ so that each
$M_r=(S(V^*) \otimes \wedge V^* \otimes \wedge^r V)^W$ for $r=1, \ldots, n$ 
is a free module 
over the exterior algebra 
$R_r:=\wedge_{S(V^*)^W} \{df_1,\ldots,df_{n-r}\}$ with basis
$$
\tilde{\theta}_{i_1} \cdots \tilde{\theta}_{i_m} \left( \theta_{j_1} \wedge \cdots \wedge \theta_{j_r} \right) 
$$
for $0 \leq m \leq r$ and $1 \leq i_1 < \cdots < i_m \leq r$ and $1 \leq j_1 < \cdots <  j_r \leq n$.
For $r=0$, take basis element $1$.
\end{conjecture}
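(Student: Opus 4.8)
The plan is to establish three properties of the proposed generators
$$
\eta_{I,J} := \tilde{\theta}_{i_1}\cdots\tilde{\theta}_{i_m}\,\bigl(\theta_{j_1}\wedge\cdots\wedge\theta_{j_r}\bigr),
$$
indexed by subsets $I=\{i_1<\cdots<i_m\}\subseteq\{1,\dots,r\}$ and $J=\{j_1<\cdots<j_r\}\subseteq\{1,\dots,n\}$: (i) that each $\eta_{I,J}$ is genuinely $W$-invariant, so that it lies in $M_r$; (ii) that the Hilbert series of the free $R_r$-module on the family $\{\eta_{I,J}\}$ equals the coefficient $f_r(W;q,t)$ of $s^r$ in Theorem~\ref{maintheorem}; and (iii) that the $\eta_{I,J}$ are simultaneously $R_r$-linearly independent and $R_r$-spanning. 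Granting (i)--(iii), the module $M_r$ is free over $R_r$ on $\{\eta_{I,J}\}$, and as a by-product its Hilbert series is $f_r(W;q,t)$; this is how the conjecture \emph{implies} Theorem~\ref{maintheorem} for the coincidental groups not already handled in Section~\ref{G(d,1,n)-section}.

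For (i), the wedges $\theta_{j_1}\wedge\cdots\wedge\theta_{j_r}$ already lie in $(S(V^*)\otimes\wedge^r V)^W$ by the Orlik--Solomon description~\eqref{OrlikSolomon}, so the only content is that each operator $\tilde{\theta}_i$ with $1\le i\le r$ carries $W$-invariants to $W$-invariants. This is the $W$-equivariance of the conjugate-and-differentiate operator attached to an invariantly chosen basic derivation, which I would prove once and for all rather than element by element; this is precisely the task reserved for Section~\ref{invariance-of-operators-section}. Step (ii) is pure degree bookkeeping: $\theta_i$ has polynomial degree $e_i^*$, so a wedge over $J$ contributes $q^{\sum_{j\in J}e_j^*}$, while each applied $\tilde{\theta}_i$ lowers polynomial degree by $e_i^*$ and raises the $\wedge V^*$-degree by one, contributing a factor $(1+tq^{-e_i^*})$. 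Summing over $I\subseteq\{1,\dots,r\}$ and over $J$ produces the generating function $\bigl(\prod_{i=1}^{r}(1+tq^{-e_i^*})\bigr)\,\sigma_r(q^{e_1^*},\dots,q^{e_n^*})$, and multiplying by $\Hilb(R_r,q,t)=\prod_{i=1}^{n-r}(1+tq^{e_i})\big/\prod_{i=1}^{n}(1-q^{d_i})$ reproduces $f_r(W;q,t)$ exactly. I would record this computation first, as it both dictates the precise index ranges appearing in the conjecture and certifies that the right \emph{number} of generators has been proposed.

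The main obstacle is (iii), the basis property, and here the coincidental hypothesis is essential: the free-module Hilbert series produced in (ii) is forced by the degrees alone, yet for non-coincidental groups it disagrees with the true Hilbert series (cf.\ Remark~\ref{G(de,e,n)-answer-comparison-remark}), so the same recipe cannot yield a basis there. For $r=1$ the statement is already known: the generators reduce to $\{\theta_j\}_{j=1}^n$ together with $\{\tilde{\theta}_1\theta_j\}_{j=1}^n=\{d\theta_j\}_{j=1}^n$, since $\theta_1=\theta_E$ and $\tilde{\theta}_E$ acts as the exterior derivative $d$, and freeness over $R_1=\wedge_{S(V^*)^W}\{df_1,\dots,df_{n-1}\}$ is exactly the Reiner--Shepler theorem~\eqref{Reiner-Shepler}. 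The natural hope is to bootstrap from $r=1$ by induction, filtering $M_r$ by $\wedge V^*$-degree and identifying the bottom layer $(S(V^*)\otimes\wedge^0 V^*\otimes\wedge^r V)^W$ with the free $S(V^*)^W$-module on the wedges $\theta_{j_1}\wedge\cdots\wedge\theta_{j_r}$, again via~\eqref{OrlikSolomon}; linear independence would then be attacked by a leading-term argument, peeling off the operators $\tilde{\theta}_i$ one at a time from the top polynomial degree downward. I do not expect this route to yield spanning uniformly, which is the genuinely hard half.

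Accordingly, I would complete (iii) case by case. For type $A_{n-1}$ and for $G(d,1,n)$ the Hilbert series $f_r(W;q,t)$ is already available from Section~\ref{G(d,1,n)-section}, so only one of independence or spanning remains, and either one then forces the other by equality of Hilbert series. For the rank-two groups the module $M_r$ is small enough to analyze the relations directly, as I would carry out in Section~\ref{two-dimensional-section}. For $H_3$, $G_{25}$, $G_{26}$, and $G_{32}$ I would compute $M_r$ and the proposed generators explicitly and verify the basis property by machine. As a uniform consistency check I would compare the total degree $\sum_{I,J}\deg\eta_{I,J}$ with the prediction furnished by the Gutkin--Opdam lemma in Section~\ref{Gutkin-Opdam-section}: agreement gives strong evidence in advance of the full verification, and any mismatch would immediately expose an error in the proposed index set.
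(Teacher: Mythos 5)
Your outline reproduces the paper's strategy almost exactly---invariance of the operators (Section~\ref{invariance-of-operators-section}), degree bookkeeping showing that the proposed free module has Hilbert series $f_r(W;q,t)$ (Proposition~\ref{conjecture-proves-thm}), the Gutkin--Opdam degree-sum computation, and case-by-case verification---but be aware that the statement is a \emph{conjecture} in the paper precisely because your step (iii) is never completed for type $A_{n-1}$ and $G(d,1,n)$ beyond rank $2$. For those families the paper establishes only the Hilbert series of $M_r$ (Corollaries~\ref{type-A-hook-content-formula} and~\ref{mainthmforG(d,1,n)}), by computing the graded multiplicity of $\wedge^r V$ character-theoretically via Kirillov--Pak and Koike; that computation says nothing about whether the specific elements $\eta_{I,J}$ are independent or spanning. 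Your observation that, once the Hilbert series is known, independence and spanning each force the other is correct, but one of them must still be established, and your proposal does not do so. As written, the plan proves the conjecture only for the rank-$2$ groups and for $H_3$, $G_{25}$, $G_{26}$, $G_{32}$---exactly the cases the paper settles---and leaves the infinite families in the same conjectural state the paper does.

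A second, structural point: you cast Gutkin--Opdam as a mere consistency check, but in the paper it carries the logical weight that makes the verified cases work. For the rank-$2$ and exceptional groups the Hilbert series of $M_r$ is \emph{not} known in advance (it is what is being proven), so your step (ii) cannot be used there to upgrade independence to a basis. Instead the paper applies Lemma~\ref{graded-commutative-algebra-lemma} to the free $S(V^*)^W$-modules $M_{r,k}$: homogeneous elements whose degrees sum to the correct total form a basis if and only if they are linearly independent over the fraction field. The Gutkin--Opdam computation (Proposition~\ref{Gutkin-Opdam-matches}) supplies that degree sum unconditionally for every duality group, and this is what reduces the entire basis property to the nonvanishing of certain determinants (Proposition~\ref{just-check-independence})---the reduction on which both the uniform rank-$2$ argument of Section~\ref{two-dimensional-section} and the {\tt Mathematica} checks depend. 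If you demote this to a sanity check, you are left needing to verify spanning directly, which is considerably harder.
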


Equivalently one can write the conjectured basis more directly in terms of subsets of $\{1, \ldots, n\}$.
Define
$$
\begin{aligned}
  \tilde{\theta}_I&:=\tilde{\theta}_{i_1} \cdots \tilde{\theta}_{i_m}:
  { S(V^*)\ot\wedge^k V^*\ot\wedge V
  \longrightarrow S(V^*)\ot\wedge^{k+m} V^*\ot\wedge V}
  &&\text{ for }
       I=\{i_1 < \cdots < i_m\},\\
  \theta_J&:=\theta_{j_1} \wedge \cdots \wedge \theta_{j_r}
  \ \quad\text{ in } S(V^*)\ot \wedge^0\ot\wedge^r V
  &&\text{ for } J=\{j_1 < \cdots < j_r\},
\\
df_L&:=df_{\ell_1} \wedge \cdots \wedge df_{\ell_{m'}}
\text{ in } S(V^*)\ot\wedge^{m'}V^*\ot \wedge^0 V
&&\text{ for } L=\{\ell_1 < \cdots < \ell_{m'}\}
\, .
\end{aligned}
$$
Conjecture~\ref{structural-Molchanov} is then equivalent to the following statement.
\vskip.1in
\noindent
{\bf Conjecture~\ref{structural-Molchanov}${}^\prime$}.
{\it
  For $W$ a coincidental reflection group, one may choose
  basic invariants $f_1, \ldots, f_n$ and basic derivations
  $\theta_1,\ldots, \theta_n$
so that 
\begin{equation}
\label{definition-of-Mrk}
M_{r,k}:=(S(V^*) \otimes \wedge^k V^* \otimes \wedge^r V)^W
\quad\text{for}\quad
1\leq r,k \leq n
\end{equation}
is a free module over $S(V^*)^W$ with basis
$$
\{ df_L \cdot \tilde{\theta}_I \left( \theta_J \right) \}
$$
as one runs through all triples $(I,J,L)$ of
subsets $J \subset [n]$, $I \subset [r]$, $L \subset [n-r]$ 
with $|J|=r$ and $|I|+|L|=k$.
}

\begin{remark}
\label{known-special-cases-remark}
Note that several special cases of the equivalent Conjectures~\ref{structural-Molchanov} and ~\ref{structural-Molchanov}${}^\prime$ hold more generally:
\begin{itemize}
 \item Conjecture~\ref{structural-Molchanov} at $r=0$ holds for all reflection groups by~\cite{Solomon}.
\item Conjecture~\ref{structural-Molchanov} at $r=1$ holds for all duality groups 
by~\cite[Thm~1.1]{ReinerShepler}.
\item  Conjecture~\ref{structural-Molchanov}${}^\prime$ at $k=0$ 
holds for all reflection groups by~\cite[Thm.~3.1]{OrlikSolomon}.
\end{itemize}

\end{remark}

%%%%%%%%%%%%%%%%%%%%%%%%%%%%%%
% We rewrote this to make it less typographically icky,
% but hopefully still helpful to the reader
%%%%%%%%%%%%%%%%%%%%%%%%%%%%%%
\begin{proposition}
\label{conjecture-proves-thm}
Conjecture~\ref{structural-Molchanov}${}^\prime$  (or its equivalent Conjecture~\ref{structural-Molchanov}) implies Theorem~\ref{maintheorem}. 
\end{proposition}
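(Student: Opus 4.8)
The plan is to feed the freeness in Conjecture~\ref{structural-Molchanov}${}^\prime$ directly into a Hilbert series computation. Since $S(V^*)^W=\CC[f_1,\ldots,f_n]$ with $\deg f_i=d_i$, one has $\Hilb(S(V^*)^W,q)=\prod_{i=1}^n(1-q^{d_i})^{-1}$, and freeness of $M_{r,k}$ as a graded $S(V^*)^W$-module forces
\[
\Hilb(M_{r,k},q)=\Hilb(S(V^*)^W,q)\cdot\sum_{b}q^{\deg_q b},
\]
where $b$ ranges over the proposed homogeneous basis of $M_{r,k}$. Weighting by $t^k$ and summing over $k$, then weighting by $s^r$ and summing over $r$, assembles the full triply-graded series \eqref{molien3-var}. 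Thus the whole proposition reduces to computing the polynomial ($q$-) degree and the exterior ($t$-) degree of each basis element $df_L\cdot\tilde{\theta}_I(\theta_J)$ and checking that the resulting generating function factors correctly.

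First I would carry out the degree bookkeeping. Each $df_\ell$ is homogeneous of polynomial degree $e_\ell=d_\ell-1$ with $\wedge V^*$-degree $1$; the wedge $\theta_J$ sits in $S^{\sum_{j\in J}e_j^*}(V^*)\otimes\wedge^0 V^*\otimes\wedge^r V$ since each $\theta_j$ has coefficients of degree $e_j^*$ by \eqref{basic-derivations}; and each operator $\tilde{\theta}_i$, being the degree-$e_i^*$ coefficients of $\theta_i$ converted into a homogeneous differential operator of order $e_i^*$ followed by a wedge with a generator of $V^*$, lowers polynomial degree by exactly $e_i^*$ while raising the $\wedge V^*$-degree by $1$. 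Hence $df_L\cdot\tilde{\theta}_I(\theta_J)$ is homogeneous of polynomial degree $\sum_{j\in J}e_j^*-\sum_{i\in I}e_i^*+\sum_{\ell\in L}e_\ell$ and exterior degree $|I|+|L|=k$, matching the grading of $M_{r,k}$.

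Since the index sets $J\subset[n]$ with $|J|=r$, together with $I\subset[r]$ and $L\subset[n-r]$, vary independently, the generating function factors:
\[
\sum_{(I,J,L)}q^{\,\sum_{j}e_j^*-\sum_{i}e_i^*+\sum_{\ell}e_\ell}\,t^{|I|+|L|}
=\sigma_r(q^{e_1^*},\ldots,q^{e_n^*})\cdot\prod_{i=1}^{r}(1+q^{-e_i^*}t)\cdot\prod_{\ell=1}^{n-r}(1+q^{e_\ell}t).
\]
Multiplying by $\Hilb(S(V^*)^W,q)$ and summing $\sum_{r=0}^n s^r$ reproduces verbatim the right-hand side of Theorem~\ref{maintheorem}. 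The one content-bearing step is the middle one: verifying that $\tilde{\theta}_i$ is homogeneous of polynomial degree $-e_i^*$ (this is what produces the factor $\prod_{i=1}^r(1+q^{-e_i^*}t)$ with its negative exponents) and that the three index families genuinely decouple; everything afterward is the routine identification of the three factors with $\sigma_r$ and the two finite products in the statement. Note that coincidentality plays no role in this implication beyond guaranteeing, through the Conjecture, that such a homogeneous basis exists.
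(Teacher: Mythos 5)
Your proposal is correct and follows essentially the same route as the paper: use freeness over $S(V^*)^W$ to reduce to summing the $(q,t)$-bidegrees of the conjectured basis elements $df_L\cdot\tilde{\theta}_I(\theta_J)$, record that each $\tilde{\theta}_i$ lowers polynomial degree by $e_i^*$ while raising exterior degree by one, and observe that the independence of $I$, $J$, $L$ makes the generating function factor into $\sigma_r(q^{e_1^*},\ldots,q^{e_n^*})\prod_{i=1}^r(1+q^{-e_i^*}t)\prod_{\ell=1}^{n-r}(1+q^{e_\ell}t)$. The degree bookkeeping and the final factorization match the paper's argument step for step.
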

 \begin{proof}
The conjecture gives an $S^W$-basis for 
$(\SSS \otimes \wedge V^* \otimes \wedge^r V)^W$ where we again appreviate $S=S(V^*)$; we compare the polynomial degrees of basis elements in the conjecture with
the coefficient of $s^r$ on the right in Theorem~\ref{maintheorem}.
Since $\Hilb(S^W,q)=\prod_{i=1}^n (1-q^{d_i})^{-1}$, after clearing the denominator in Theorem~\ref{maintheorem}, 
we have
\begin{equation}
\left( \prod_{\ell=1}^{n-r}(1+q^{e_\ell}t) \right)
 \left(  \prod_{i=1}^{r}(1+q^{-e_i^*}t) \right) \sigma_r(q^{e_1*},\ldots,q^{e_n^*}).
\end{equation}
We check that this matches the $(q,t)$-bidegrees of the $S^W$-basis elements 
$
\{ df_L \cdot \tilde{\theta}_I \left( \theta_J \right) \}
$
in Conjecture~\ref{structural-Molchanov}${}^\prime$:
$$
\begin{aligned}
&\left(\prod_{\ell=1}^{n-r}(1+q^{e_\ell}t) \right)
 \left(  \prod_{i=1}^{r}(1+q^{-e_i^*}t) \right) \sigma_r(q^{e_1*},\ldots,q^{e_n^*})\\
 &=
 \left(
 \sum_{L \subset [n-r]} t^{|L|} q^{\sum_{\ell \in L} e_\ell} 
\right)
\left(
\sum_{I \subset [r]} t^{|I|} q^{- \sum_{i \in I} e_i^*}
\right)
\left(
\sum_{\substack{J \subset [n]:\\|J|=r}} q^{\sum_{j \in J} e_j^*} 
\right)
=\sum_{(I,J,L)} t^{|I|+|L|} q^{\sum_{\ell \in L} e_\ell + \sum_{j \in J}e_j^* - \sum_{i \in I} e_i^* }, 
\end{aligned}
$$
where the last sum runs through $(I,J,L)$ satisfying the conditions in   Conjecture~\ref{structural-Molchanov}${}^\prime$. Theorem~\ref{maintheorem} then follows 
since $df_L \cdot \tilde{\theta}_I \left( \theta_J \right)$ has $\wedge V^*$-degree $|I|+|L|$ and 
$S$-degree $\sum_{\ell \in L} e_\ell + \sum_{j \in J}e_j^* - \sum_{i \in I} e_i^*$.
\end{proof}

\section{The elements in Conjecture~\ref{structural-Molchanov} are invariant}
%%%%%%%%%%%%%%%%%%%%%%%%%%%%%%%%%%%%%%%
\label{invariance-of-operators-section}
%%%%%%%%%%%%%%%%%%%%%%%
As a precursor to verifying Conjecture~\ref{structural-Molchanov} or ~\ref{structural-Molchanov}${}^\prime$,
we check that the forms indicated there are indeed invariant under the action
of any reflection group $W$.

\begin{lemma}
\label{operatorinvariance}
For $V=\CC^n$, let $W\subset\text{GL}(V)$ be a group of isometries
and let $S=S(V^*)$.
\begin{itemize}
\item[(a)] The map
  $
  (S\ot V \ot 1) \times (S\ot\wedge^kV^*\ot \wedge^r V)
  \rightarrow
  S\ot \wedge^{k+1}V^* \ot \wedge^r V$,
  \
  $(\theta, \omega) \mapsto \widetilde{\theta}(\omega)$,
 is $W$-equivariant.
 \item[(b)]
 \rule{0ex}{3ex}
   For $\theta$ in $(S\ot 1 \ot V)^W$
   and $\om$ in $(S\ot\wedge^kV^*\ot \wedge^r V)^W$,
   the form $\tilde{\theta}(\omega)$ lies in
   $(S\ot\wedge^{k+1}V^*\ot \wedge^r V)^W$.
      \end{itemize}
   \end{lemma}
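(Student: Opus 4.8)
The plan is to prove part (a) directly by tracking how each tensor slot transforms under $W$, and then to obtain part (b) as the special case of invariant inputs. Indeed, granting (a), for $\theta \in (S\ot 1 \ot V)^W$ and $\om \in (S\ot\wedge^k V^*\ot\wedge^r V)^W$ one has $w\cdot\tilde{\theta}(\om) = \widetilde{(w\cdot\theta)}(w\cdot\om) = \tilde{\theta}(\om)$ for every $w\in W$, which is exactly (b). So the entire content lies in the $W$-equivariance asserted in (a). To set up, I fix $w\in W$ and write its matrix on $V$ as $w\cdot y_j = \sum_i w_{ij}\, y_i$; since $W$ acts by isometries this matrix is unitary. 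Three transformation laws will drive the computation: (i) on the dual basis, $w\cdot x_j = \sum_i (w^{-1})_{ji}\, x_i = \sum_i \overline{w_{ij}}\, x_i$, where the second equality uses unitarity $w^{-1}=w^{*}$; (ii) the constant-coefficient operators $\del/\del x_j$ transform like $V$ under operator conjugation, $w\circ \tfrac{\del}{\del x_j}\circ w^{-1} = \sum_i w_{ij}\, \tfrac{\del}{\del x_i}$, which one verifies by evaluating both sides on a linear form $x_k$; and (iii) consequently $w\cdot\theta = \sum_i \big(\sum_j w_{ij}(w\cdot h_j)\big)\ot y_i$ when $\theta = \sum_j h_j \ot y_j$.

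First I would isolate the interaction between the $W$-action and the passage $h\mapsto \overline{h}(\del)$ from a polynomial to a constant-coefficient differential operator, since this is where the conjugation in the definition of $\tilde\theta$ matters. The map $\Phi\colon h\mapsto \overline{h}(\del/\del x_1,\ldots,\del/\del x_n)$ is a conjugate-linear algebra homomorphism from $S(V^*)$ into the algebra of constant-coefficient operators $\cong S(V)$, and I claim it is $W$-equivariant for the operator-conjugation action $D\mapsto w\circ D\circ w^{-1}$ on the target. This is the heart of the matter: it is checked on the linear generators $x_k$ using (i) and (ii) — the conjugation of coefficients built into $\Phi$ exactly cancels the conjugate $\overline{w_{ij}}$ produced by the dual action in (i) — and then propagates to all of $S(V^*)$ because $\Phi$ is multiplicative (up to conjugation of scalars) and both $W$-actions are by algebra automorphisms. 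On elements this records the single identity
\[
w \cdot \big( \overline{h}(\del)(f) \big) = \overline{(w\cdot h)}(\del)(w\cdot f) \qquad \text{for all } f,h \in S(V^*).
\]

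With this identity in hand, the equivariance in (a) is a bookkeeping computation on a generator $\om = f\ot\eta\ot\eta'$. Expanding $w\cdot\tilde\theta(\om)$, I apply the displayed identity to the polynomial slot and (i) to the freshly attached $x_j$ in the $\wedge V^*$ slot; this produces a double sum over $i,j$ with scalar factors $\overline{w_{ij}}$. Absorbing each $\overline{w_{ij}}$ through the conjugate-linearity of $\Phi$ turns the inner sum over $j$ into $\overline{H_i}(\del)$ with $H_i := \sum_j w_{ij}(w\cdot h_j)$, and by (iii) this $H_i$ is precisely the coefficient of $y_i$ in $w\cdot\theta$, so the result is exactly $\widetilde{(w\cdot\theta)}(w\cdot\om)$. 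I expect the main (indeed essentially the only) obstacle to be conceptual rather than computational: pinpointing where the isometry hypothesis is used. Unitarity enters in exactly one place, the identity $\overline{w_{ij}} = (w^{-1})_{ji}$ of (i), and it is this that allows the conjugation in the definition of $\tilde\theta$ to reconcile the covariant transformation of the coefficient $h_j$ (indexed against $y_j$) with the contravariant transformation of the attached form $x_j$; without unitarity the two sides would differ by a transpose-versus-conjugate discrepancy.
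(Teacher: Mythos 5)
Your proposal is correct and follows essentially the same route as the paper: both reduce part (a) to the $W$-equivariance of the pairing $(h,f)\mapsto \bar{h}\big(\tfrac{\del}{\del x_1},\ldots,\tfrac{\del}{\del x_n}\big)(f)$ and then conclude by a direct computation on decomposable tensors, with (b) as an immediate consequence. The only difference is that the paper cites this key equivariance from \cite[\S 6]{Shepler05}, whereas you prove it by checking it on linear generators (where unitarity cancels the conjugation) and extending multiplicatively --- a correct and slightly more self-contained variant of the same argument.
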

\begin{proof}
For any polynomials $h$, $f$ in $S$, let
$\del(h)(f):=
\bar{h}\big(\tfrac{\del}{\del x_1}, \ldots, \tfrac{\del}{\del x_n}\big)(f).$
The map 
$S\times S \rightarrow S$ given by
$(h, f)\mapsto \del(h)(f)$
is $W$-equivariant (see~\cite[\S 6]{Shepler05}):
For $g$ in $W$,
$$
g\big(\del(h)f\big) = \del(g h)(gf)\, .
$$
Now consider some 
$\theta = h \ot 1 \ot y_j$ in 
$S\ot 1 \ot V$ and 
$\omega = f \ot \eta \ot \eta'$ in $S\ot \wedge V^*\ot \wedge V$.
For any $g$ in $W$, $(g\theta, g\omega)$ maps to $g(\tilde{\theta}({\omega}))$ 
under the function in (a) since
$$
\begin{aligned}
g\big( \tilde{\theta}( \omega) \big)
&=
g\big( \del(h)f \ot x_j \wedge \eta \ot \eta' \big)
=g\big(\del(h) f\big) \ot g x_j \wedge g\eta \ot g \eta'
\\
&=\del\big(g(h)\big) (gf) \ot g(x_j \wedge  \eta) \ot g \eta'
=\widetilde{g\theta}\big( g( f\ot \eta \ot \eta') \big)
=\widetilde{g\theta}(g\omega)\, .
\end{aligned}
$$
Thus $(\theta, g\omega)$ maps to $g\big(\tilde{\theta}(\omega)\big)$ whenever $\theta$ is $W$-invariant,
and $\tilde{\theta}(\omega)$ is $W$-invariant whenever $\theta$ and $\omega$ are.
\end{proof}

\begin{remark}Recall that for any linear transformation $g$ in $\text{GL}(V)$,
the matrix recording the dual action of $g$ on $V^*$ with respect to a dual basis is just the inverse transpose of
the matrix recording the action of $g$ on $V$.
As $W$ is finite, we may and have assumed the matrices giving the action of $W$ are all unitary, so that
the matrix of $g$ in $W$ acting on $V^*$ is the complex conjugate (entry-wise) of the matrix of $g$ acting on $V$.
This explains why we
take the complex conjugates of coefficients in defining the operators $\tilde{\theta}$.
\end{remark}

%%%%%%%%%%%%%%%%%%%%%%%%%%%%%%%%%%%%%%%%%%%%%%%%%%%%%%%%%%%%
%%%%%%%%%%%%%%%%%%%%%%%%%%%%%%%%%%%%%%%%%%%%%%%%%%%
\section{Conjecture~\ref{structural-Molchanov}
 agrees with the Gutkin-Opdam calculation}
\label{Gutkin-Opdam-section}

As a second precursor to verifying Conjecture~\ref{structural-Molchanov} (or ~\ref{structural-Molchanov}${}^\prime$), we check that it
correctly predicts the sum of the degrees of the homogeneous elements in a basis
for $(S \otimes \wedge^k V^* \ot \wedge^r V)^W$ over $S(V^*)^G$ when $W$ is a duality group.  
For any reflection group $W$, set (e.g., see~\cite[\S4.5.5, Remark~4.48]{Broue}))
\begin{equation}
\label{N-and-N-star-definitions}
\begin{aligned}
N&:=\#\{\text{reflections in }W\}={\textstyle\sum_{i=1}^n} e_i,\\
N^*&:=\#\{\text{reflecting hyperplanes for }W\}
=\textstyle\sum_{i=1}^n e^*_i\, .
\end{aligned}
\end{equation}
Recall that for any $W$-representation $U$, the
sum of the $U$-exponents 
$$\psi(U)=e_1(U)+\ldots + e_{\dim U}(U)\, $$
is the sum of the polynomial degrees of homogeneous elements
in any basis of $(S(V^*) \otimes U)^W$ as a free module over
$S(V^*)^W$.
The Gutkin-Opdam Lemma~\cite[Prop.~4.3.3, eqn.~(4.6)]{Broue} 
allows one to predict $\psi(U)$
as follows:
$$
\begin{aligned}
\psi(U)&=\sum_{H} \sum_{j=0}^{e_H-1} j \cdot \langle U \downarrow^W_{W_H}, \det{}^j \rangle_{W_H} \\
&=\sum_{H} D_H( U \downarrow^W_{W_H} ),
\end{aligned}
$$
where $H$ runs through the reflecting hyperplanes for $W$, with $W_H\subset W$ the pointwise stabilizer subgroup of $H$
and $e_H:=|W_H|$.  Here $D_H$ is the linear functional on the Grothendieck group $G_0(W_H)$
of $W_H$-representations that sends the $1$-dimensional representation $\det^j$ to $j$ for $j=0,1,\ldots,e_H-1$.

%%%%%%%%%%%%%%%%%%%%%%%%%%%%%%%%5
\begin{lemma}\label{Gutkin-Opdam-Calculation}
Let $W$ be a reflection group on $V$ and 
$U=\wedge^k V^* \otimes \wedge^r V$.
Then $$\psi(U)=
\binom{n-1}{k-1}\ \binom{n-1}{r}\ N + 
\binom{n-1}{k}\ \binom{n-1}{r-1}\ N^*\, .
$$
\end{lemma}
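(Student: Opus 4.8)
The plan is to apply the Gutkin--Opdam formula $\psi(U)=\sum_H D_H(U\downarrow^W_{W_H})$ directly, computing the restriction of $U=\wedge^k V^*\otimes\wedge^r V$ to each pointwise stabilizer $W_H$ and then evaluating the functional $D_H$. First I would record how $V$ decomposes as a $W_H$-representation. Since $W_H$ is cyclic of order $e_H$, fixes the mirror $H$ pointwise, and acts faithfully on the complementary line $L=H^\perp$ by a primitive $e_H$-th root of unity, and since $\det_V$ restricted to $W_H$ is exactly this character on $L$, one obtains
$$V\downarrow^W_{W_H}=\one^{\,n-1}\oplus\det,\qquad V^*\downarrow^W_{W_H}=\one^{\,n-1}\oplus\det{}^{-1},$$
where $\det$ abbreviates the restriction of $\det_V$ to $W_H$.

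Next I would expand the exterior powers using $\wedge^k(\one^{\,n-1}\oplus\chi)=\binom{n-1}{k}\one\oplus\binom{n-1}{k-1}\chi$ for a one-dimensional character $\chi$, which gives
$$\wedge^k V^*\downarrow^W_{W_H}=\binom{n-1}{k}\one\oplus\binom{n-1}{k-1}\det{}^{-1},\qquad \wedge^r V\downarrow^W_{W_H}=\binom{n-1}{r}\one\oplus\binom{n-1}{r-1}\det,$$
then tensor the two and collect the multiplicity of each power of $\det$. The constituent $\det^{0}=\one$ appears with multiplicity $\binom{n-1}{k}\binom{n-1}{r}+\binom{n-1}{k-1}\binom{n-1}{r-1}$, the constituent $\det^{+1}$ with multiplicity $\binom{n-1}{k}\binom{n-1}{r-1}$, and the constituent $\det^{-1}$ with multiplicity $\binom{n-1}{k-1}\binom{n-1}{r}$.

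I would then apply $D_H$. The one point demanding care is that $D_H$ sends $\det^{\,j}\mapsto j$ only for $0\le j\le e_H-1$ and is \emph{not} additive in the exponent, so the summand $\det^{-1}$ must first be rewritten as $\det^{\,e_H-1}$, yielding $D_H(\det^{-1})=e_H-1$, while $D_H(\one)=0$ and $D_H(\det)=1$. This produces
$$D_H\!\left(U\downarrow^W_{W_H}\right)=\binom{n-1}{k}\binom{n-1}{r-1}+(e_H-1)\binom{n-1}{k-1}\binom{n-1}{r}.$$

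Finally I would sum over all reflecting hyperplanes, using $\sum_H 1=N^*$ together with $\sum_H (e_H-1)=N$ — the latter because the $e_H-1$ non-identity elements of the cyclic group $W_H$ are precisely the reflections with mirror $H$, so the sum counts every reflection exactly once. This yields
$$\psi(U)=\binom{n-1}{k-1}\binom{n-1}{r}N+\binom{n-1}{k}\binom{n-1}{r-1}N^*,$$
as claimed. I expect no real obstacle beyond the bookkeeping in the $D_H$-step; the only genuine subtlety is to respect that $D_H$ is a non-additive functional, forcing the replacement $\det^{-1}=\det^{\,e_H-1}$, after which everything reduces to routine exterior-algebra and binomial manipulation.
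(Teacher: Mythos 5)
Your proof is correct and follows essentially the same route as the paper: both restrict $V$ and $V^*$ to the cyclic stabilizer $W_H$ as $\one^{\,n-1}\oplus\det$ and $\one^{\,n-1}\oplus\det^{e_H-1}$, evaluate $D_H$ with the crucial identification $\det^{-1}=\det^{e_H-1}$, and sum over hyperplanes using $\sum_H 1=N^*$ and $\sum_H(e_H-1)=N$. The only difference is presentational — the paper packages all $k,r$ at once in a generating function $(1+t)^{n-1}(1+s)^{n-1}(tN+sN^*)$ and extracts the coefficient of $t^ks^r$, whereas you expand the exterior powers directly for fixed $k$ and $r$.
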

\begin{proof}
  Let $H$ be a reflecting hyperplane of $W$.
  The restrictions of $V^*$ or $V$ to $W_H$ each
  contain $n-1$ copies of the trivial $W_H$-representation
  as direct summands,
  and then one extra summand
  carrying the $1$-dimensional representations
  ${\det}^{e_H-1}$ or $\det$, respectively.
  We sum over all $k,r$, apply the Gutkin-Opdam Lemma,
and then extend
$D_H$ to be linear over $\CC[s,t]$
keeping~(\ref{N-and-N-star-definitions}) in mind:
$$
\begin{aligned}
\sum_{k,r} \psi(\wedge^k V^* \otimes \wedge^r V) \,\, t^k s^r
&= \sum_{H} \sum_{k,r} D_H( \wedge^k V^* \otimes \wedge^r V \downarrow^W_{W_H} ) \,\, t^k s^r \\
&= \sum_{H} D_H\left( (1+t)^{n-1} (1+t \det{}^{e_H-1}) \, (1+s)^{n-1} (1+s \det) \right) \\
&=(1+t)^{n-1} (1+s)^{n-1} \sum_{H} D_H \left( (1+t \det{}^{e_H-1})(1+s \det) \right) \\
&=(1+t)^{n-1} (1+s)^{n-1} \sum_{H} D_H \left( (1+t \det{}^{e_H-1}+s \det +st) \right) \\
&=(1+t)^{n-1} (1+s)^{n-1} \sum_{H} (t (e_H-1)+s) \\
&=(1+t)^{n-1} (1+s)^{n-1} (tN+sN^*)\, .
\end{aligned}
$$
The result then follows from
extracting the coefficient of $t^ks^r$.
\end{proof}

%%%%%%%%%%%%%%%%%%%%%%%%%%%%%
\begin{proposition}
\label{Gutkin-Opdam-matches}
Either of Theorem~\ref{maintheorem}
or Conjecture~\ref{structural-Molchanov}
correctly predicts 
$\psi(U)$ for each $U=\wedge^k V^* \otimes \wedge^r V$
with $0\leq k,r\leq n$
for any irreducible duality reflection group $W$.

\end{proposition}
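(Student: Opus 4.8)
The plan is to extract from the Hilbert series the predicted value of $\psi(U)$ and to match it against the Gutkin--Opdam value computed in Lemma~\ref{Gutkin-Opdam-Calculation}. By Proposition~\ref{conjecture-proves-thm}, Conjecture~\ref{structural-Molchanov} produces exactly the Hilbert series asserted in Theorem~\ref{maintheorem}, so the two make the \emph{same} prediction for $\psi(U)$; it therefore suffices to check this common prediction once. I emphasize that the verification will use only the duality relation $e_i+e^*_{n+1-i}=h$ (with $h=d_n$) and no arithmetic-progression hypothesis, consistent with the statement being about all irreducible duality groups.

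Recall that for a free $S^W$-module $(S\otimes U)^W$ with a homogeneous basis of degrees $m_1,\dots,m_{\dim U}$ one has $\Hilb((S\otimes U)^W,q)=\bigl(\sum_j q^{m_j}\bigr)/\prod_i(1-q^{d_i})$, so $\psi(U)=\sum_j m_j$ is the derivative at $q=1$ of the numerator. Accordingly I set $F(q,t,s):=\sum_{r=0}^n s^r\,\sigma_r(q^{e^*_1},\dots,q^{e^*_n})\prod_{i=1}^r(1+q^{-e^*_i}t)\prod_{i=1}^{n-r}(1+q^{e_i}t)$, the numerator of the series in Theorem~\ref{maintheorem}, so that the predictions assemble into the single generating function $\sum_{k,r}\psi(\wedge^k V^*\otimes\wedge^r V)\,t^k s^r=\partial_q F|_{q=1}$. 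I would evaluate the right-hand side by logarithmic differentiation of each summand $N_r$. At $q=1$ one has $N_r(1,t)=\binom{n}{r}(1+t)^n$, hence $F(1,t,s)=(1+s)^n(1+t)^n$, while $\partial_q\log N_r|_{q=1}=\tfrac{r}{n}N^*+\tfrac{t}{1+t}\bigl(\sum_{i=1}^{n-r}e_i-\sum_{i=1}^{r}e^*_i\bigr)$, the first term coming from $\partial_q\sigma_r|_{q=1}=\binom{n-1}{r-1}N^*$ and the rest from the two products.

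The crux is the simplification of the \emph{asymmetric} exponent sum $\sum_{i=1}^{n-r}e_i-\sum_{i=1}^{r}e^*_i$, and this is the one place duality enters. Applying $e_i=h-e^*_{n+1-i}$ gives $\sum_{i=1}^{n-r}e_i=(n-r)h-\sum_{j=r+1}^{n}e^*_j=(n-r)h-N^*+\sum_{j=1}^{r}e^*_j$, so the coexponent sums cancel and $\sum_{i=1}^{n-r}e_i-\sum_{i=1}^{r}e^*_i=(n-r)h-N^*$; the cancellation happens precisely because the conjectured basis pairs the $n-r$ smallest exponents against the $r$ smallest coexponents. Substituting this in, using $N+N^*=nh$ together with $\binom{n}{r}(n-r)=n\binom{n-1}{r}$ and $\binom{n}{r}\tfrac{r}{n}=\binom{n-1}{r-1}$, and then multiplying by $s^r$ and summing over $r$ via $\sum_r\binom{n-1}{r}s^r=(1+s)^{n-1}$ and $\sum_r\binom{n-1}{r-1}s^r=s(1+s)^{n-1}$, I expect the expression to collapse, after a single application of Pascal's rule $\binom{n}{r}=\binom{n-1}{r}+\binom{n-1}{r-1}$, to $(1+s)^{n-1}(1+t)^{n-1}(tN+sN^*)$. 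This is exactly the generating function produced in the proof of Lemma~\ref{Gutkin-Opdam-Calculation}, and comparing coefficients of $t^k s^r$ gives the stated formula for $\psi(\wedge^k V^*\otimes\wedge^r V)$.

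The main obstacle is bookkeeping rather than anything conceptual: unwound directly, the predicted $\psi$ is the sum over the triples $(I,J,L)$ of Conjecture~\ref{structural-Molchanov}${}^\prime$ of $\sum_{\ell\in L}e_\ell+\sum_{j\in J}e^*_j-\sum_{i\in I}e^*_i$, and extracting it requires counting how many triples contain each fixed index (via Vandermonde's identity $\sum_m\binom{r}{m}\binom{n-r}{k-m}=\binom{n}{k}$ and its shifted variants). Packaging everything as the logarithmic derivative of $F$ at $q=1$, as above, absorbs that counting into the derivatives of the products and isolates the single essential use of the duality relation.
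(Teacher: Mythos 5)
Your proposal is correct and follows essentially the same route as the paper: differentiate the numerator of the Hilbert series at $q=1$, use the duality relation to reduce $\sum_{i=1}^{n-r}e_i-\sum_{i=1}^{r}e^*_i$ to $N-rh=(n-r)h-N^*$, and finish with $N+N^*=nh$ and standard binomial identities to recover $(1+t)^{n-1}(1+s)^{n-1}(tN+sN^*)$ from Lemma~\ref{Gutkin-Opdam-Calculation}. The only differences are presentational (logarithmic differentiation and summing over $r$ into a bivariate generating function, versus the paper's coefficient-of-$s^r$ extraction and term-by-term expansion of $\sigma_r$ over subsets $J$), and your checked identity does collapse to the required expression.
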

%%%%%%%%%%%%%%%%%%%%%%%%%%%%%%%%
\begin{proof}
  [Proof of Proposition~\ref{Gutkin-Opdam-matches}]
  On one hand, 
  $\sum_{k,r}\psi(U)t^ks^r=(1+t)^{n-1} (1+s)^{n-1} (tN+sN^*)$
  by Lemma~\ref{Gutkin-Opdam-Calculation}.
  We fix $r=0,1,\ldots,n$ and extract the coefficient of
  $s^r$ to obtain
$$
\sum_{k} \psi(\wedge^k V^* \otimes \wedge^r V) \,\, t^k 
= (1+t)^{n-1} \left( t N \binom{n-1}{r} + N^* \binom{n-1}{r-1} \right).
$$
On the other hand,  Theorem~\ref{maintheorem}  predicts
for each $r=0,\ldots, n$ that
$$
\begin{aligned}
\sum_{k} \psi(\wedge^k V^* \otimes \wedge^r V) \,\, t^k 
&= \lim_{q \rightarrow 1} \left[ \tfrac{\del}{\del q} 
       \left( \sigma_r(q^{e^*_1},\ldots,q^{e^*_n}) \cdot
          \prod_{i=1}^{n-r} (1+q^{e_i}t)  \cdot  \prod_{i=1}^{r} (1+q^{-e^*_i}t) \right) \right] \\
&= \sum_{\substack{J \subset \{1,2,\ldots,n\}: \\ |J|=r}} 
        \lim_{q \rightarrow 1} \left[ \tfrac{\del}{\del q} 
        \left( q^{\sum_{j \in J} e_j^*} \cdot
          \prod_{i=1}^{n-r} (1+q^{e_i}t)  \cdot  \prod_{i=1}^{r} (1+q^{-e^*_i}t) \right) \right] \\
&= \sum_{\substack{J \subset \{1,2,\ldots,n\}: \\ | J|=r}} 
    \left[
        (1+t)^n \sum_{j \in J} e^*_j \quad +\quad (1+t)^{n-1} \sum_{i=1}^{n-r} te_i \quad +\quad  (1+t)^{n-1} \sum_{i=1}^{r} (-te^*_i) 
    \right]  \\
&= (1+t)^{n-1}
      \left[
        \sum_{i=1}^n e^*_i (1+t) \binom{n-1}{r-1} + t \binom{n}{r} \left( \sum_{i=1}^{n-r} e_i - \sum_{i=1}^{r} e^*_i \right) 
    \right]\, .
\end{aligned}
$$
It only remains to check the bracketed expression is
$t N \binom{n-1}{r} + N^* \binom{n-1}{r-1}$.  We use
\eqref{N-and-N-star-definitions}:
$$
\begin{aligned}
\sum_{i=1}^n e^*_i (1+t)& \binom{n-1}{r-1} +  t \binom{n}{r} \left( \sum_{i=1}^{n-r} e_i - \sum_{i=1}^{r} e^*_i \right)\\
&=N^* (1+t) \binom{n-1}{r-1} + t \binom{n}{r} \left( N-rh \right) \\
 &= t \left( \binom{n-1}{r-1} N^* + \binom{n}{r} (N-rh) \right) 
          + N^*  \binom{n-1}{r-1}\\
 &= t \left( \binom{n-1}{r-1} N^* + \binom{n-1}{r-1}(N-rh)+ \binom{n-1}{r} (N-rh) \right) 
          + N^*  \binom{n-1}{r-1} \\
 &= t N \binom{n-1}{r} 
          + N^*  \binom{n-1}{r-1}\, .
\end{aligned}
$$
Here we used the fact that $N+N^*=nh$ (as $W$ is a duality group)
and $\binom{n-1}{r-1}(n-r) = \binom{n-1}{r}r$.
\end{proof}

%%%%%%%%%%%%%%%%%%%%%%%%%%%%%%%
%%%%%%%%%%%%%%%%%%%%%%%%%%%%%%%
%%%%%%%%%%%%%%%%%%%%%%%%%%%%%%%
\subsection*{Independence over the fraction field}
\label{independence}
We now explain  why the above Opdam-Gutkin calculation implies that Conjecture~\ref{structural-Molchanov} may be shown 
with an independence argument.

\begin{lemma} \cite[Lemma~4.1]{ReinerShepler}
\label{graded-commutative-algebra-lemma}
Let $A$ be a graded $k$-algebra and integral domain, and
$M\cong A^p$ a free graded $A$-module whose homogeneous basis
elements have degrees suming to $\psi$.
Then another set of homogeneous elements $\{n_1, \cdots, n_p\}$ in $M$ 
with the same degree sum $\sum_{i=1}^p \deg(n_i)=\psi$
form an $A$-basis for $M$ if and only if they are
linearly independent over the fraction field $K = \text{Frac}(A)$.
\end{lemma}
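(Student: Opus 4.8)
The plan is to reduce both conditions to a single statement about the determinant of the transition matrix between the two sets of elements, and then to use the grading to pin that determinant down to degree zero. First I would fix a homogeneous $A$-basis $m_1,\ldots,m_p$ of $M$, say with $\deg(m_i)=\delta_i$, so that $\sum_{i=1}^p\delta_i=\psi$ (the degree multiset of a homogeneous basis of a free graded module is an invariant of $M$). Writing each $n_j$ uniquely as $n_j=\sum_{i=1}^p a_{ij}\,m_i$ with $a_{ij}\in A$, homogeneity of $n_j$ and of the $m_i$ forces each coefficient $a_{ij}$ to be homogeneous (or zero) of degree $\deg(n_j)-\delta_i$. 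Let $B=(a_{ij})$ be the resulting $p\times p$ transition matrix over $A$.

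Next I would translate the two properties into conditions on $B$. Since $M$ is free with basis $m_1,\ldots,m_p$, the set $\{n_1,\ldots,n_p\}$ is an $A$-basis exactly when $B$ is invertible over $A$, i.e.\ $\det B\in A^\times$. After tensoring with $K=\Frac(A)$ we have $M\otimes_A K\cong K^p$ with the images of the $m_i$ as a $K$-basis, so $\{n_1,\ldots,n_p\}$ is linearly independent over $K$ exactly when $\det B\neq 0$ (using that $A$ is a domain, so that $A\hookrightarrow K$ and $\det B$ is nonzero in $A$ iff nonzero in $K$). The forward implication is then immediate: an $A$-basis makes $\det B$ a unit, hence nonzero, hence $K$-independent.

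The heart of the argument is the reverse implication, and this is where the degree-sum hypothesis is indispensable. I would compute the degree of $\det B$ from its Leibniz expansion $\det B=\sum_{\sigma\in\symm_p}\operatorname{sgn}(\sigma)\prod_{j=1}^p a_{\sigma(j),j}$: each nonzero term has degree
$$
\sum_{j=1}^p\bigl(\deg(n_j)-\delta_{\sigma(j)}\bigr)=\sum_{j=1}^p\deg(n_j)-\sum_{i=1}^p\delta_i=\psi-\psi=0,
$$
so $\det B$ is homogeneous of degree $0$, i.e.\ $\det B\in A_0$. In our application $A=S(V^*)^W$ is connected graded with $A_0=k=\CC$, so every nonzero element of degree $0$ is a unit. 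Thus if the $n_j$ are $K$-linearly independent, then $\det B\neq 0$ forces $\det B\in A_0^\times\subseteq A^\times$, whence $B$ is invertible over $A$ and $\{n_1,\ldots,n_p\}$ is an $A$-basis.

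The one genuinely load-bearing step is the pairing of this degree computation with the fact that nonzero degree-zero elements are units; everything else is bookkeeping. This is precisely what breaks if one drops the requirement $\sum_i\deg(n_i)=\psi$, since then $\det B$ could be a nonzero, non-invertible element of positive degree (for instance, a single element $t\in k[t]$ is independent over $\Frac(k[t])$ yet is not a $k[t]$-basis). Matching the degree sum to $\psi$ is exactly the device that confines $\det B$ to degree $0$ and thereby converts ``nonzero'' into ``unit.''
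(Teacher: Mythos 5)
Your proof is correct and is essentially the standard argument used for \cite[Lemma~4.1]{ReinerShepler}: expand the $n_j$ in a homogeneous $A$-basis, note that every nonzero Leibniz term of the transition determinant has degree $\sum_j \deg(n_j)-\sum_i \delta_i=\psi-\psi=0$, and convert ``nonzero'' into ``unit'' because a nonzero element of $A_0=k$ is invertible. The one small point worth making explicit is that the final step needs $A$ to be connected graded ($A_0=k$); this is an implicit hypothesis of the lemma itself rather than a feature of the application $A=S(V^*)^W$ (with, say, $A=k[x]$ graded so that $\deg x=0$, the element $x$ is independent over $\Frac(A)$ but is not an $A$-basis of $A$), so you should fold it into the general statement rather than deferring to the special case.
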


Thus one may verify Conjecture~\ref{structural-Molchanov}
by proving that the
basis elements of the module $(S(V^*) \otimes \wedge^k V^* \otimes \wedge^r V)^W$  
it predicts are linearly independent over the fraction field of $(S(V^*)$ for each $k, r$:
\begin{prop}
\label{just-check-independence}
Let $W$ be a coincidental reflection group
with any set of basic invariants $f_1,\ldots,f_n$ and basic derivations
$\theta_1,\ldots,\theta_n$.  Fix some $1\leq r \leq n$.
If the derviation differential forms
$$
\tilde{\theta}_{i_1} \cdots \tilde{\theta}_{i_k} \big( \theta_{j_1} \wedge \cdots \wedge \theta_{j_r} \big) 
\ \ \in S(V^*) \otimes \wedge^k V^* \otimes \wedge^r V 
$$
for $0 \leq k \leq r$ and $1 \leq i_1 < \cdots < i_k \leq r$ and $1 \leq j_1 < \cdots <  j_r \leq n$
are linearly independent over
the fraction field $\text{Frac}(S(V^*))$,
then they form a basis for 
$M_{r}=(S(V^*) \otimes \wedge V^* \otimes \wedge^r V)^W$ over the exterior subalgebra 
$R_r:=\wedge_{S(V^*)^W} \{df_1,\ldots,df_{n-r}\}$. 
In this case, $M_{r}$ is a free module over $R_r$.
\end{prop}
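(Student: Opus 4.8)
The plan is to recognize this proposition as a single instance of Lemma~\ref{graded-commutative-algebra-lemma}, applied one exterior degree at a time over the polynomial ring $A=S^W$, with the degree bookkeeping supplied by Proposition~\ref{Gutkin-Opdam-matches}. Write $S=S(V^*)$ and, for each $k$, set $M_{r,k}=(S\ot\wedge^k V^*\ot\wedge^r V)^W$, so $M_r=\bigoplus_k M_{r,k}$. I would consider the $R_r$-module map
$$
\Phi:\bigoplus_{\substack{I\subseteq[r]\\ J\subseteq[n],\,|J|=r}} R_r\,e_{I,J}\longrightarrow M_r,
\qquad e_{I,J}\longmapsto \tilde\theta_I(\theta_J),
$$
which lands in $M_r$ by Lemma~\ref{operatorinvariance} and is homogeneous for both the polynomial grading and the $\wedge V^*$-grading. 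Since $R_r$ is free over $S^W$ on $\{df_L:L\subseteq[n-r]\}$, the source is $S^W$-free on $\{df_L\,e_{I,J}\}$, and $\Phi$ sends this basis to the candidate family $\mathcal B:=\{df_L\cdot\tilde\theta_I(\theta_J)\}$. Thus $\Phi$ is an isomorphism of $R_r$-modules---equivalently $M_r$ is $R_r$-free with the asserted basis---if and only if, for each $k$, the homogeneous piece $\mathcal B_k=\{df_L\cdot\tilde\theta_I(\theta_J):|I|+|L|=k\}$ is an $S^W$-basis of $M_{r,k}$.

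Next I would assemble the inputs to Lemma~\ref{graded-commutative-algebra-lemma} with $A=S^W$ and $M=M_{r,k}$. First, $M_{r,k}=(S\ot U)^W$ for $U=\wedge^k V^*\ot\wedge^r V$ is a free $S^W$-module of rank $\dim U=\binom{n}{k}\binom{n}{r}$ by Chevalley's Theorem (or Hochster--Eagon), and a Vandermonde count gives $|\mathcal B_k|=\binom{n}{r}\sum_{i+l=k}\binom{r}{i}\binom{n-r}{l}=\binom{n}{k}\binom{n}{r}$, matching the rank. Second, the degrees recorded in the proof of Proposition~\ref{conjecture-proves-thm} show that the sum of polynomial degrees over $\mathcal B_k$ is the value predicted by Theorem~\ref{maintheorem}, which Proposition~\ref{Gutkin-Opdam-matches} (applicable since a coincidental group is an irreducible duality group) identifies with $\psi(\wedge^k V^*\ot\wedge^r V)$, the degree sum of any genuine homogeneous $S^W$-basis of $M_{r,k}$. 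With cardinality and degree sum matched, Lemma~\ref{graded-commutative-algebra-lemma} reduces ``$\mathcal B_k$ is a basis'' to ``$\mathcal B_k$ is linearly independent over $\Frac(S^W)$''.

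It then remains to deduce this independence from the hypothesis. Because $\Frac(S^W)\subseteq\Frac(S)$, any $\Frac(S^W)$-dependence among elements of $\mathcal B_k$ is \emph{a fortiori} a $\Frac(S)$-dependence, so it suffices to prove each $\mathcal B_k$ is independent over $K:=\Frac(S)$; and since every $df_L\cdot\tilde\theta_I(\theta_J)$ is homogeneous of $\wedge V^*$-degree $|I|+|L|$, independence of the full family $\mathcal B$ over $K$ is equivalent to independence of each $\mathcal B_k$. Passing to $N:=K\ot_S(S\ot\wedge V^*\ot\wedge^r V)=K\ot_{\CC}(\wedge V^*\ot\wedge^r V)$ and using that the nonvanishing of the Jacobian makes $\{df_1,\dots,df_{n-r}\}$ a $K$-independent set of $1$-forms, $N$ becomes free over the exterior algebra $\Lambda_r:=\wedge_K\{df_1,\dots,df_{n-r}\}$ of $\Lambda_r$-rank $2^r\binom{n}{r}$. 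Since $\{df_L:L\subseteq[n-r]\}$ is a $K$-basis of $\Lambda_r$, the family $\mathcal B$ is $K$-independent exactly when $W_0:=\operatorname{span}_K\{\tilde\theta_I(\theta_J)\}$ generates $N$ \emph{freely} over $\Lambda_r$. The hypothesis says precisely that $\dim_K W_0=2^r\binom{n}{r}$, which equals the $\Lambda_r$-rank of $N$, so by graded Nakayama this freeness is equivalent to the single condition
$$
W_0\ \cap\ \big(df_1\wedge N+\cdots+df_{n-r}\wedge N\big)\ =\ 0.
$$

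The hard part will be this last condition. The hypothesis provides only the \emph{dimension} of $W_0$, namely independence of the bare forms $\tilde\theta_I(\theta_J)$, whereas the displayed condition demands that $W_0$ avoid the subspace of forms divisible by one of $df_1,\dots,df_{n-r}$---and a subspace of the correct dimension need not do so. Verifying it will require the explicit shape of the operators $\tilde\theta_i$, that is, how differentiating polynomial coefficients and wedging in the covectors $x_j$ interacts with wedging by the $df_\ell$, together with the coincidental duality numerology, so that bare independence genuinely propagates through the exterior generators $df_L$. This interaction, rather than any part of the bookkeeping above, is where I expect the real work to lie.
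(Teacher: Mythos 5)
Your strategy is exactly the paper's: the published proof of Proposition~\ref{just-check-independence} consists of the single sentence ``Use Proposition~\ref{Gutkin-Opdam-matches} together with Lemma~\ref{graded-commutative-algebra-lemma} and Lemma~\ref{operatorinvariance},'' and your first two paragraphs correctly supply all the bookkeeping that sentence compresses: invariance of the candidate elements via Lemma~\ref{operatorinvariance}, the reduction to showing each $\mathcal{B}_k=\{df_L\cdot\tilde\theta_I(\theta_J):|I|+|L|=k\}$ is an $S^W$-basis of $M_{r,k}$, the rank count $\binom{n}{k}\binom{n}{r}$, the degree-sum match with $\psi(\wedge^kV^*\ot\wedge^rV)$ via Proposition~\ref{Gutkin-Opdam-matches}, and the resulting reduction via Lemma~\ref{graded-commutative-algebra-lemma} to independence over the fraction field.

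However, your proposal stops before proving the statement, and the place where you stop is a genuine issue. Lemma~\ref{graded-commutative-algebra-lemma} requires independence over $\Frac(S^W)$ of the \emph{full} family $\{df_L\cdot\tilde\theta_I(\theta_J)\}$, whereas the hypothesis of the proposition, read literally, supplies only the $L=\varnothing$ subfamily. As you observe, after passing to $N=K\ot\wedge V^*\ot\wedge^r V$ the missing implication amounts to $W_0\cap\Lambda_r^+N=0$, and a subspace of the complementary dimension need not meet $\Lambda_r^+N$ trivially (if, say, some $\tilde\theta_i$ acted on part of $W_0$ like wedging with $df_\ell$ for $\ell\le n-r$, independence of the bare forms would survive while the products collapse), so the step is not formal. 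Be aware that the paper's own one-line proof is equally silent on this passage; in the verifications that follow, the only case worked by hand (rank $2$ with $r=n$) has $R_r=S^W$, so the two families coincide and the discrepancy is invisible, while the phrase ``check various determinants are nonzero'' in each bidegree effectively tests the full family. The clean repair is to read the hypothesis as $\Frac(S(V^*))$-independence of $\{df_L\cdot\tilde\theta_I(\theta_J)\}$ (equivalently of each $\mathcal{B}_k$, by homogeneity in $\wedge V^*$-degree), after which your argument closes immediately; as written, your proposal leaves that last step open rather than completing it.
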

\begin{proof}
Use Proposition~\ref{Gutkin-Opdam-matches} together with Lemma~\ref{graded-commutative-algebra-lemma}
 and Lemma~\ref{operatorinvariance}.
\end{proof}
By Proposition~\ref{just-check-independence}, it suffices to check various determinants are nonzero to prove
Conjecture~\ref{structural-Molchanov}.

%%%%%%%%%%%%%%%%%%%%%%%%%%%%%%%%%%%%%%%%%%%
%%%%%%%%%%%%%%%%%%%%%%%%%%%%%%%%%%%%%%%%%%%
%%%%%%%%%%%%%%%%%%%%%%%%%%%%%%%%%%%%%%%%%%%
\section{The main result and Molchanov's hypothesis}
\label{mainproof}
In this section, we outline the proof of our main result, Theorem~\ref{maintheorem}, from the Introduction:
\vskip.1in
\noindent
{\bf Theorem~\ref{maintheorem}.}
For any coincidental complex reflection group $W$ acting on $V=\CC^n$, 
$$
\Hilb\left((S(V^*)\otimes \wedge V^* \otimes \wedge V)^W,q,t ,s \right) 
= \sum_{r=0}^n \sigma_r(q^{e_1^*}, \ldots, q^{e_n^*})
   \frac{
  \prod_{i=1}^{r} (1+q^{e^*_i}t)\
   \prod_{i=1}^{n-r} (1+q^{e_i}t) s^r }
      { \prod_{i=1}^{n} (1- q^{d_i})  }\, .
$$
\begin{proof}

We proceed in essentially three cases, some of which prove the 
stronger Conjecture~\ref{structural-Molchanov}${}^\prime$; see Lemma~\ref{conjecture-proves-thm}.

\begin{itemize}
\item
For the Weyl groups of type $A_n$ and the
Shephard-Todd family $G(d,1,n)$, Theorem~\ref{maintheorem} was deduced in
Section~\ref{G(d,1,n)-section}, as 
Corollaries~\ref{type-A-hook-content-formula} and ~\ref{mainthmforG(d,1,n)}.

\item
We prove Conjecture~\ref{structural-Molchanov}${}^\prime$ uniformly in Section~\ref{two-dimensional-section}
for all coincidental groups of rank $2$, relying on Proposition~\ref{just-check-independence} so as to only 
check that certain determinants are nonzero;  see Theorem~\ref{main-thm-dim-2}.

\item
This leaves the exceptional real type $H_3$ and Shephard groups $G_{25}, G_{26}, G_{32}$ of ranks $3$ or $4$, where we checked 
Conjecture~\ref{structural-Molchanov}${}^\prime$ in {\tt Mathematica} via Proposition~\ref{just-check-independence},
for these choices of $\{f_i\}, \{\theta_i\}$:
\begin{itemize}
\item For $H_3$, we used $\{f_i\}$ from
Saito, Yano, and Sekiguchi~\cite{SaitoYanoSekiguchi},
with $\theta_i=\sum_{j=1}^n \frac{\partial f_i}{\partial x_j} \otimes y_j$.  
\item For $G_{25}, G_{26}, G_{32}$, we used 
$\{f_i\}$ and $\{\theta_i\}$ from Orlik and Terao~\cite[Appendix~B.3]{OrlikTerao}. \qedhere
\end{itemize}
\end{itemize}
\end{proof}

\begin{remark}
Theorem~\ref{maintheorem} is very closely related to a hypothesis stated by
Molchanov
~\cite[\S 8]{Molchanov}.
Molchanov's formulation\footnote{Warning: translation of his paper to English
  introduced two typos---instead of $\prod_{i=1}^{r}$ and $\prod_{i=1}^{n}$, it has $\prod_{i=1}^{m}$ for both.}
differs from Theorem~\ref{maintheorem} both
in that he assumes that $W$ is a {\it real} reflection group
(so that as $W$-representations, $V^*\cong V$ and $e^*_i=e_i$)
and in that he assumes that the exponents are {\it distinct}.
Unfortunately, the scope of his hypothesis is off,
and Theorem~\ref{maintheorem} seems to be the correct
formulation.  In fact, combining
Remark~\ref{G(de,e,n)-answer-comparison-remark}
with the data on the noncoincidental exceptional reflection groups presented 
in Section~\ref{data-section},
one sees that the formula in Theorem~\ref{maintheorem}
holds for an irreducible reflection group $W$, real or complex,
{\it if and only $W$ is coincidental}.
\end{remark}

%%%%%%%%%%
\begin{comment}
Theorem~\ref{maintheorem} can be restated to give the corrected version
of~\cite[\S 8]{Molchanov}:
\begin{theorem}
\label{corrected-Molchanov}
For $W$ a coincidental reflection group acting on $V=\CC^n$, and
$0 \leq r \leq n$,
$$
\begin{aligned}
\Hilb\left((S\otimes \wedge V^* \otimes \wedge^r V)^W,q,t \right) 
&=\sigma_r(q^{e^*_1},\ldots,q^{e^*_n})\
   \frac{  \prod_{i=1}^{r} (1+q^{-e^*_i}t) \prod_{i=1}^{n-r} (1+q^{e_i}t)}{ \prod_{i=1}^{n} (1- q^{d_i})  }
\end{aligned}
$$
where here the coefficient of $q^i t^k$ in the Hilbert series  
is the dimension of $(S^i(V^*) \otimes \wedge^k V^* \otimes \wedge^r V)^W$. 
\end{theorem}
\end{comment}
%%%%%%%%%%

%%%%%%%%%%%%%%%%%%%%%%%%%%%%%%%%%%%%%%%%%%%%%%%%%%%%%%%%%%%
\section{Reflection groups in two dimensions}
\label{two-dimensional-section}
Here we verify Conjecture~\ref{structural-Molchanov}${}^\prime$ for rank $2$
coincidental reflection groups using Proposition~\ref{just-check-independence}.
In two dimensions, the coincidental groups $W$ are the same as the irreducible duality groups.
Recall as notaion that $W$ acts on $V=\CC^2$ with $\CC$-basis $\{y_1,y_2\}$ and on
$V^*$ with dual  $\CC$-basis $\{x_1,x_2\}$, so that $S(V^*)=\CC[x_1,x_2]$.

\begin{lemma}\label{therecanbeonlyone}
For $W$ any rank $2$ irreducible finite reflection group acting on $V=\CC^2$,
the $W$-invariant space
$\left( V^* \ot V^* \ot \wedge^2 V\right)^W$ is $1$-dimensional over $\CC$,
spanned by 
\begin{equation}
\label{omega-two-up-to-rescaling}
(x_1 \ot x_2 -x_2 \ot x_1)\ot y_1\wedge  y_2.
\end{equation}
\end{lemma}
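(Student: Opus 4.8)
The plan is to decompose the four-dimensional space $V^*\ot V^*$ into its symmetric and antisymmetric parts and analyze each. Over $\CC$ with $W$ finite one has the $W$-equivariant splitting $V^*\ot V^* = \Sym^2 V^* \oplus \wedge^2 V^*$, so that
$$
\left(V^*\ot V^*\ot \wedge^2 V\right)^W
= \left(\Sym^2 V^*\ot \wedge^2 V\right)^W \ \oplus\ \left(\wedge^2 V^*\ot \wedge^2 V\right)^W .
$$
I would first dispose of the antisymmetric summand: the line $\wedge^2 V^*$ carries the character $\det_{V^*}=\det^{-1}$ while $\wedge^2 V$ carries $\det$, so $\wedge^2 V^*\ot \wedge^2 V$ is the trivial one-dimensional $W$-module and is entirely $W$-invariant. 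Since $x_1\ot x_2 - x_2\ot x_1$ spans the copy of $\wedge^2 V^*$ inside $V^*\ot V^*$ and $y_1\wedge y_2$ spans $\wedge^2 V$, this summand is exactly the span of \eqref{omega-two-up-to-rescaling}.

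It then remains to show the symmetric summand vanishes. Identifying $\Sym^2 V^*$ with the degree-two component of $S=S(V^*)$ and using $\wedge^2 V\cong\det$, the space $(\Sym^2 V^*\ot\wedge^2 V)^W$ is the degree-two graded piece of $(S\ot\wedge^2 V)^W$, which is the space of $\det^{-1}$-relative invariants of $S$. By the fact recalled in the proof of Theorem~\ref{Shepler-product}, the nonzero $\det^{-1}$-relative invariant of smallest degree is $Q=\prod_H \ell_H$, of degree $N^*=e_1^*+e_2^*$; hence there are no nonzero $\det^{-1}$-relative invariants in any degree below $N^*$, and the symmetric summand vanishes as soon as $N^*>2$.

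The only real obstacle is therefore the degree bound $N^*>2$, and this is precisely where irreducibility enters. The degree-one part of $(S\ot V)^W$ is $(V^*\ot V)^W=\Hom_W(V,V)$, which is one-dimensional by Schur's Lemma because $V$ is irreducible; since $S^W$ has no elements in degree one, this forces exactly one coexponent to equal $1$ (the Euler derivation), so $e_1^*=1$ and $e_2^*\geq 2$. Hence $N^*=e_1^*+e_2^*\geq 3>2$, the symmetric summand is zero, and $\left(V^*\ot V^*\ot\wedge^2 V\right)^W$ is one-dimensional, spanned by \eqref{omega-two-up-to-rescaling}, as claimed. Everything apart from this degree bound is formal.
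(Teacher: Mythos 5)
Your proposal is correct and follows essentially the same route as the paper: split $V^*\ot V^*$ into $\Sym^2 V^*\oplus\wedge^2 V^*$, observe that the antisymmetric summand tensored with $\wedge^2 V$ is trivial as a $W$-module and spanned by \eqref{omega-two-up-to-rescaling}, and kill the symmetric summand because the $\det^{-1}$-relative invariants of $S(V^*)$ form the free module $S(V^*)^W\cdot Q$ with $\deg Q=N^*>2$. The only difference is that you supply a Schur's-Lemma justification of the bound $N^*>2$ (via the unique coexponent equal to $1$), which the paper simply asserts.
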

\begin{proof}
Again, we write $S=S(V^*)$.  As a $\text{GL}(V)$-representation, 
$V^* {\ot V^*} \cong S^2 \oplus \wedge^2 V^*$.
This isomorphism restricts one of $W$-representations, and hence
$$
\begin{aligned}
\left( V^* \ot V^* \ot \wedge^2 V\right)^W
\cong
\left( S^2 \ot \wedge^2 V \right)^W
  \oplus
     \left(\wedge^2 V^* \ot \wedge^2 V \right)^W\, .
\end{aligned}
$$
We analyze the two direct summands in this last expression.
Since $\wedge^2 V$ is $1$-dimensional spanned by $dy_1 \wedge dy_2$ and 
carries the $W$-character $\det$, for any $W$-representation $U$, the $W$-fixed space
$\left(U \ot \wedge^2 V \right)^W$ will be the tensor product of the $\det^{-1}$-isotypic component of $U$
with $\CC dy_1 \wedge dy_2$. 

For $U=\wedge^2 V^*$, this  $\det^{-1}$-isotypic component
is $1$-dimensional, spanned by $x_1 \otimes x_2 - x_2 \otimes x_1$.

For $U=S^2$, we argue that this $\det^{-1}$-isotypic component will vanish. 
For reflection groups $W$ of any rank, the  $\det^{-1}$-isotypic component of $S$ is the free
$S^W$-module $S^W \cdot Q$ of rank $1$ (by Stanley~\cite[Thm.~3.1]{Stanley},
see also~\cite[Ex.~6.40]{OrlikTerao})
for $Q$ the product of the linear
forms defining the reflecting hyperplanes with degree $N^*=e_1^*+\cdots+e_n^*$,
the number of reflecting hyperplanes. 
But $N^* > 2$ for any irreducible reflection group $W$ of
rank at least $2$ and hence $S^2$ has zero intersection with this $\det^{-1}$-isotypic component.

Thus
$\left( V^* \ot V^* \ot \wedge^2 V\right)^W
\cong \left(\wedge^2 V^* \ot \wedge^2 V \right)^W$ is spanned by 
$(x_1 \ot x_2 -x_2 \ot x_1)\ot y_1\wedge  y_2$.
\end{proof}

\begin{proposition}
\label{homogeneous-polynomial-fact}
If $h(x_1,\ldots,x_n)$ in $S(V^*)$ is homogeneous and nonzero, then
$\bar{h}(\tfrac{\partial}{\partial x_1},\ldots, \tfrac{\partial}{\partial x_n})(h) \neq 0$.
\end{proposition}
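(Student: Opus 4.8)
The plan is to recognize $\bar{h}(\tfrac{\partial}{\partial x_1},\ldots,\tfrac{\partial}{\partial x_n})(h)$ as the value of the \emph{Fischer} (or \emph{apolarity}) pairing of $h$ with itself, and then to exploit that this pairing is a positive-definite Hermitian form on homogeneous polynomials of a fixed degree. The whole point is that the complex conjugation built into the definition of $\bar h(\partial)$ makes each monomial contribute a nonnegative real, so that the total cannot vanish for nonzero $h$.

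First I would expand $h$ into monomials, writing $h=\sum_{\alpha} c_\alpha\, x^\alpha$ where $\alpha=(\alpha_1,\ldots,\alpha_n)$ runs over multi-indices with $|\alpha|:=\alpha_1+\cdots+\alpha_n=d:=\deg h$, so that the conjugated operator is $\bar h(\partial)=\sum_\alpha \bar{c}_\alpha\, \partial^\alpha$ with $\partial^\alpha:=\tfrac{\partial^{\alpha_1}}{\partial x_1^{\alpha_1}}\cdots\tfrac{\partial^{\alpha_n}}{\partial x_n^{\alpha_n}}$. The key computation is the monomial orthogonality in equal degree: since $\partial^\alpha x^\beta=0$ whenever $\beta_i<\alpha_i$ for some $i$, and both $|\alpha|=|\beta|=d$, the only surviving terms have $\alpha=\beta$, giving $\partial^\alpha x^\alpha=\alpha!:=\prod_i \alpha_i!$. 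Hence
$$
\bar h(\partial)(h)=\sum_\alpha \bar{c}_\alpha c_\alpha\, \alpha!=\sum_\alpha |c_\alpha|^2\, \alpha!.
$$
This is a sum of nonnegative real numbers, and since $h\neq 0$ at least one $c_\alpha\neq 0$, so the sum is strictly positive and in particular nonzero.

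I do not expect a serious obstacle here: the entire content is the orthogonality $\partial^\alpha x^\beta=\delta_{\alpha,\beta}\,\alpha!$ among monomials of the same degree, combined with the fact that $|c_\alpha|^2\geq 0$. The only step deserving genuine care is the complex conjugation in the definition of $\bar h(\partial)$; without it the coefficient of $\alpha!$ would be $c_\alpha^2$ rather than $|c_\alpha|^2$, and these could conceivably cancel. It is precisely the conjugation that renders the Fischer pairing Hermitian and positive-definite, which is exactly what forces the value to be nonzero.
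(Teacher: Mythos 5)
Your proof is correct and is essentially identical to the paper's: both expand $h$ into monomials, use the orthogonality $\partial^\alpha x^\beta=\delta_{\alpha,\beta}\,\alpha!$ in equal degree, and conclude that $\bar h(\partial)(h)=\sum_\alpha |c_\alpha|^2\,\alpha!>0$. The framing via the Fischer/apolarity pairing is a nice conceptual gloss, but the computation is the same.
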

\begin{proof}
Express $h$ of degree $d$ as a finite sum of monomials,
$$
h=
\sum_{\substack{\alpha=(\alpha_1,\ldots,\alpha_n):\\\sum_i \alpha_i=d}}
   c_\alpha x_1^{\alpha_1} \cdots x_n^{\alpha_n}
=\sum_\alpha c_\alpha \xx^{\alpha},
$$
and write $(\del x_i)^m$ for $\del^{m}/ \del x_i^{m}$.
For $\sum_i \alpha_i =\sum_i \beta_i$, the monomial
$
(\del^{\alpha_1})^{\alpha_1} \cdots  (\del x_n)^{\alpha_n} (\xx^\beta) 
$ 
is
$
\alpha_1! \cdots \alpha_n!$
if $\beta=\alpha$ but vanishes for $\beta \neq \alpha$.  Thus by linearity, 
$$
\bar{h}(\partial x_1,\ldots, \partial x_n)(h) 
=\sum_{\alpha, \beta}\,  \bar{c}_\alpha \, c_\alpha \,\, (\partial x_1)^{\alpha_1} \cdots  (\partial x_n)^{\alpha_n} (\xx^\beta)
=\sum_{\alpha} \vert c \vert^2 \,\, \alpha_1! \cdots \alpha_n!\, ,
$$
which is strictly positive as long as at least one $c_\alpha \neq 0$ in $\CC$, 
that is, as long as $h \neq 0$ in $S$.
\end{proof}

\begin{prop}
\label{main-thm-dim-2}
Conjecture~\ref{structural-Molchanov} holds for any
irreducible duality group $W$ acting on $\CC^2$:
For $S=S(V^*)$ and
$M_{r}=(S \otimes \wedge V^* \otimes \wedge^r V)^W$,
there is a set of basic invariants $f_1$, $f_2$ and basic derivations
$\theta_1$, $\theta_2$ so that
\begin{itemize}
\item
$M_0$ is free over $R_0=\wedge_{S^W} \{df_1,df_{2}\}$ with basis $\{1\}$;
\item
$M_1$ is free over $R_1= {S^W} df_1$ with basis $\{\theta_1$, $\theta_2$, 
$\tilde{\theta}_{1}(\theta_1)$, $\tilde{\theta}_{1}(\theta_2)\}$;
\item
$M_2$ is free over $R_2={S^W}$ with basis 
$\{\theta_1\wedge \theta_2$,
$\tilde{\theta}_{1}(\theta_1\wedge \theta_2)$, $\tilde{\theta}_{2}(\theta_1\wedge\theta_2)$, 
$\tilde{\theta}_1 \tilde{\theta}_{2}(\theta_1\wedge\theta_2)\}$.
\end{itemize}
\end{prop}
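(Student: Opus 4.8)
The plan is to reduce the Proposition to a single case. By Remark~\ref{known-special-cases-remark}, Conjecture~\ref{structural-Molchanov} already holds at $r=0$ (Solomon~\cite{Solomon}, giving $M_0=R_0$ with basis $\{1\}$) and at $r=1$ (for all duality groups, by~\cite[Thm.~1.1]{ReinerShepler}). So only the top case $r=n=2$ needs argument. For this I would invoke Proposition~\ref{just-check-independence}: it suffices to choose basic derivations $\theta_1,\theta_2$ and show that the four forms $\theta_1\wedge\theta_2$, $\tilde\theta_1(\theta_1\wedge\theta_2)$, $\tilde\theta_2(\theta_1\wedge\theta_2)$, and $\tilde\theta_1\tilde\theta_2(\theta_1\wedge\theta_2)$ are linearly independent over $K:=\Frac(S)$. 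I would take $\theta_1=\theta_E$ the Euler derivation and $\theta_2=h_1\ot y_1+h_2\ot y_2$ any second basic derivation, homogeneous of degree $e_2^*$.

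Because each operator $\tilde\theta_i$ raises the $\wedge V^*$-degree by exactly one, the four forms sit in $\wedge^k V^*$-degrees $k=0,1,1,2$, and any $K$-linear dependence must hold within each fixed-$k$ layer. Writing $\theta_1\wedge\theta_2=D\ot(y_1\wedge y_2)$ with $D=\det(\theta_i^{\,j})$ (a nonzero scalar multiple of $Q=\prod_H\ell_H$, hence $D\neq 0$), the $k=0$ layer is the single nonzero form $D\ot(y_1\wedge y_2)$ and contributes nothing. The crux is the $k=1$ layer, spanned by $u:=\tilde\theta_1(\theta_1\wedge\theta_2)=dD\ot(y_1\wedge y_2)$, whose coefficient vector is the gradient $(\partial D/\partial x_1,\partial D/\partial x_2)$, together with $v:=\tilde\theta_2(\theta_1\wedge\theta_2)$, plus the lone $k=2$ form.

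The key observation is a degree coincidence. Since $\deg D=e_1^*+e_2^*$ and $\tilde\theta_2$ lowers polynomial degree by $e_2^*$, the form $v$ has polynomial degree $e_1^*=1$; thus $v\in(V^*\ot V^*\ot\wedge^2 V)^W$, which is one-dimensional by Lemma~\ref{therecanbeonlyone}. Hence $v=\kappa\,(x_1\ot x_2-x_2\ot x_1)\ot(y_1\wedge y_2)$ for a scalar $\kappa$, so its coefficient vector is $(-\kappa x_2,\kappa x_1)$. The $2\times 2$ determinant controlling independence in the $k=1$ layer then collapses, via Euler's identity $x_1\,\partial D/\partial x_1+x_2\,\partial D/\partial x_2=(\deg D)\,D$, to
$$
\det\begin{pmatrix}\partial D/\partial x_1 & \partial D/\partial x_2\\ -\kappa x_2 & \kappa x_1\end{pmatrix}
=\kappa\,(\deg D)\,D,
$$
which is nonzero as soon as $\kappa\neq 0$. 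A short computation shows the remaining $k=2$ form equals $\tilde\theta_1(v)=2\kappa\,(x_1\wedge x_2)\ot(y_1\wedge y_2)$; on the other hand it is $\bar D(\tfrac{\partial}{\partial x_1},\tfrac{\partial}{\partial x_2})(D)\cdot(x_1\wedge x_2)\ot(y_1\wedge y_2)$, which is nonzero by Proposition~\ref{homogeneous-polynomial-fact} since $D$ is homogeneous and nonzero. Therefore $\kappa\neq 0$, and every layer is settled.

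I expect the main obstacle to be precisely this interlocking step: recognizing that the degree-$e_1^*=1$ coincidence forces $v$ into the one-dimensional invariant space of Lemma~\ref{therecanbeonlyone} (so Euler's identity can collapse the $k=1$ determinant to $(\deg D)\,D$), and then extracting the nonvanishing of the scalar $\kappa$ from the apolarity positivity of Proposition~\ref{homogeneous-polynomial-fact}. Once $\kappa\neq 0$ is in hand, invariance of the forms (Lemma~\ref{operatorinvariance}) together with the degree-sum match built into Proposition~\ref{just-check-independence} (via Proposition~\ref{Gutkin-Opdam-matches}) completes the proof uniformly across all rank-$2$ duality groups, with no case-by-case analysis.
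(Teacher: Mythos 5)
Your proposal is correct and follows essentially the same route as the paper's proof: reduce to the case $r=n=2$ via Remark~\ref{known-special-cases-remark} and Proposition~\ref{just-check-independence}, use the degree coincidence together with Lemma~\ref{therecanbeonlyone} to identify $\tilde{\theta}_2(\theta_1\wedge\theta_2)$ with a multiple of $(x_1\otimes x_2-x_2\otimes x_1)\otimes y_1\wedge y_2$, collapse the $k=1$ determinant to $\kappa\deg(D)\cdot D$ by Euler's identity, and invoke Proposition~\ref{homogeneous-polynomial-fact} for the nonvanishing at $k=2$. The only (minor, and welcome) deviations are that you work with an arbitrary second basic derivation rather than the explicit choice $\theta_2=-\tfrac{\partial Q}{\partial x_2}\otimes y_1+\tfrac{\partial Q}{\partial x_1}\otimes y_2$ from Orlik--Terao, and you deduce $\kappa\neq 0$ from the two evaluations of the $k=2$ form rather than via the paper's separate divergence argument showing $\omega_2\neq 0$.
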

\begin{proof}
Recall from Remark~\ref{known-special-cases-remark} that Conjecture~\ref{structural-Molchanov} is known
for all duality groups when $r\in\{0,1\}$ or $k=0$.  
Since $0 \leq r \leq n =2$, it hence suffices to only consider here the case $r=n=2$ and $k\in \{1,2\}$.

By Proposition~\ref{just-check-independence}, we need only check the linear independence 
over the fraction field $K$ of $S$ of 
$$
\tilde{\theta}_{1}(\theta_1\wedge \theta_2) \text{ and }\tilde{\theta}_{2}(\theta_1\wedge\theta_2)
\quad\text{in}\quad
M_{2,1}:=(S \otimes \wedge^1 V^* \otimes \wedge^2 V)^W
\quad
\text{(the case $k=1$)}
$$
and also check that
$$
\quad\quad\quad\quad\quad 0 \neq \tilde{\theta}_1 \tilde{\theta}_{2}(\theta_1\wedge\theta_2)
\quad\text{in}\quad
M_{2,2}:=(S \otimes \wedge^2 V^* \otimes \wedge^2 V)^W
\quad
\text{(the case $k=2$)}\, .
$$
Recall that 
$
\theta_1 \wedge \theta_2= Q \otimes 1 \otimes y_1 \wedge y_2
$
by~\cite[Thm.~3.1]{OrlikSolomon}
as $\wedge^2 V= \CC y_1 \wedge y_2$.
Again, we write $\del x_i$ for $\del/\del x_i$.

\subsection*{The case $n=r=2$ and $k=2$.}
The derivation differential form
$$
\tilde{\theta}_1 \tilde{\theta}_{2}(\theta_1\wedge\theta_2)
=
\tilde{\theta}_1 \tilde{\theta}_2( Q  \otimes 1 \otimes y_1 \wedge y_2)
= \bar{Q}(\del x_1,\del x_2) (Q) \otimes x_1 \wedge x_2 \otimes y_1 \wedge y_2
$$
is nonzero since Lemma~\ref{homogeneous-polynomial-fact} implies that
the scalar $\bar{Q}(\del x_1,\del x_2) (Q)$ 
in $\CC$ is nonzero.

%%%%
\subsection*{The case $n=r=2$ and $k=1$.}
Here, one must check that 
\begin{equation}
\label{omegas-defined}
\begin{aligned}
\omega_1:= & \hphantom{x}\tilde{\theta}_1 (\theta_1 \wedge \theta_2) =\tilde{\theta}_1( Q  \otimes 1 \otimes y_1 \wedge y_2)
\quad\text{ and }\\
\omega_2:= & \hphantom{x}\tilde{\theta}_2 (\theta_1 \wedge \theta_2)= \tilde{\theta}_2( Q  \otimes 1 \otimes y_1 \wedge y_2)
\end{aligned}
\end{equation}
are $K$-linearly independent in 
$S \otimes V^* \otimes \wedge^2 V$.
Our strategy is to 
make convenient choices for basic derivations $\theta_1, \theta_2$
that identify $\omega_1, \omega_2$ more concretely.  
As explained in~\cite[Appendix~B.2]{OrlikTerao}, one may choose in rank $2$ 
\begin{equation}
\label{our-choice-of-rank-2-thetas}
\begin{array}{rcccl}
  \theta_1  &=& x_1 \otimes 1 \otimes y_1 &+&  x_2 \otimes 1 \otimes   y_2
  \quad\text{ and }\\
\rule{0ex}{3ex}
\theta_2  &=& -\frac{\partial Q}{\partial x_2} \otimes 1 \otimes y_1
                   &+&\frac{\partial Q}{\partial x_1}  \otimes 1 \otimes y_2\, .
\end{array}
\end{equation}
With this choice,
\begin{equation}
\label{omegas-more-explicitly}
\begin{aligned}
\omega_1
&=\left(
\tfrac{\partial Q}{\partial x_1}  \otimes x_1
+\tfrac{\partial Q}{\partial x_2}  \otimes x_2 \right)
\otimes y_1 \wedge y_2 \quad\text{ and } 
\\
\omega_2&=\left(
-\overline{\tfrac{\partial Q}{\partial x_2}}(\partial x_1,\partial x_2)(Q)  \otimes x_1
+\overline{\tfrac{\partial Q}{\partial x_1}}(\partial x_1,\partial x_2)(Q)  \otimes x_2
\right)
\otimes y_1 \wedge y_2.
\end{aligned}
\end{equation}
Note that $\omega_2$ has $S$-degree $1$ inside $S \otimes V^* \otimes \wedge^2 V^*$.
Also notice that $\omega_2 \neq 0$, since otherwise
$$
\overline{\tfrac{\partial Q}{\partial x_1}}(\partial x_1,\partial x_2)(Q)
=\overline{\tfrac{\partial Q}{\partial x_2}}(\partial x_1,\partial x_2)(Q)
=0
$$
which would then imply the following contradiction to Proposition~\ref{homogeneous-polynomial-fact}:
$$
\begin{aligned} 
0 
=\tfrac{\partial}{\partial x_1}
\left(
{\overline{\tfrac{\partial Q}{\partial x_1}}}(\partial x_1,\partial x_2)(Q)
\right)
+
\tfrac{\partial}{\partial x_2}
\left(\,
\overline{\tfrac{\partial Q}{\partial x_2}}(\partial x_1,\partial x_2)(Q)
 \right)
&=\left(\, \overline{
x_1 \tfrac{\partial Q}{\partial x_1}}+
\overline{x_2  \tfrac{\partial Q}{\partial x_2}}
\, \right)
(\partial x_1,\partial x_2)(Q) \\
&=\deg(Q) \cdot \bar{Q}(\partial x_1, \partial x_2)(Q)\, .
\end{aligned}
$$
 Thus $\omega_2$ is a nonzero element of 
$\left( V^* \ot V^* \ot \wedge^2 V\right)^W$ 
by Lemma~\ref{operatorinvariance} and we may identify $\omega_2$, up to a nonzero scalar,
with $(x_1\ot x_2-x_2\ot x_1)\ot y_1\wedge y_2$ by Lemma~\ref{therecanbeonlyone}.

Finally, we check that $\omega_1$, $\omega_2$ are $K$-linearly independent.
The matrix
expressing $\omega_1$, $\omega_2$  with respect to the $S$-basis 
$x_1 \otimes y_1 \wedge y_2$, $x_2 \otimes y_1 \wedge y_2$
of $S \otimes V^* \otimes \wedge^2 V$ 
%is
%$
%\left[
%\begin{smallmatrix}
%\frac{\partial Q}{\partial x_1} & -x_2\\ 
%& \\
%\frac{\partial Q}{\partial x_2} & \ x_1
%\end{smallmatrix}
%\right],
%$
has determinant $x_1 \frac{\partial Q}{\partial x_1} + x_2 \frac{\partial Q}{\partial x_2} 
=\deg(Q) \cdot Q \neq 0$.

\end{proof}
%%%%

%%%%%%%%%%%%%%%%%%%%%%%%%%%%%%%%%%%%%%%%%%%%%%%%%%%%%%%%%%%%%%%%%%%%%%%%%%
\section{Conversion between $(q,t)$-analogues of $h$-vectors and $f$-vectors}
\label{h-to-f-section}
We now highlight some applications of Theorem~\ref{maintheorem}
to $f$-vectors and $h$-vectors in algebraic combinatorics.
We will see in Remark~\ref{answerquestion} later how these applications answer a question
on $q$-Kirkman and $q$-Narayana numbers raised in~\cite[Problem~11.3]{ArmstrongReinerRhoades}.

Let $W$ be a coincidental reflection group with smallest exponent $e_1$ and exponent gap $a$.
In the Introduction, we defined these $(q,t)$-analogues
of the $f$-vector and $h$-vector:
\begin{equation}
\label{analogues}
\begin{aligned}
f_r(W;q,t) 
&=q^{r+a\binom{r}{2}} 
   \qbin{n}{r}{q^a}  
    \frac{(-tq^{e_1};q^a)_{n-r}  (-tq^{-1};q^{-a})_{r}}
               {(q^{e_1+1};q^a)_n}\, 
               &&\ \big(\text{the $(q,t)$-analogue of the $f$-vector}\big)
               ,\\
h_r(W;q,t) &= (-tq^{-ar-1})^{n-r} \qbin{n}{r}{q^a}
                    \frac{ (-tq^{-1};q^{-a})_r } { (q^{e_1+1};q^a)_r }\, 
                    &&\  \big(\text{the $(q,t)$-analogue of the $h$-vector}\big)\, .
 \end{aligned}
\end{equation}
We later explain in Section~\ref{parking-section}
why it is  appropriate to call these
analogues of $f$-vectors and $h$-vectors
(see~\eqref{Cambrian-f-vector} and~\eqref{Cambrian-h-vector}).
In this section, we prove Theorem~\ref{h-to-f-theorem} of the Introduction that
converts between these $(q,t)$-analogues.  As preparation,
we rephrase $f_r(W;q,t)$ using the notation of {\it basic hypergeometric functions}~\cite[Chap.~1]{GasperRahman}:
\begin{equation}
\label{basic-hypergeometric-definition}
{}_{k+1}\phi_k\left[
\left.
\begin{matrix}
a_1 & a_2 & \dots &a_{k+1} \\
      & b_1 & \dots &b_{k} 
\end{matrix}\
 \right|\,  q; z\right]
:= \sum_{r \geq 0} \ z^r\ \frac{ (a_1;q)_r (a_2;q)_r \cdots (a_{k+1};q)_r}{(q;q)_r (b_1;q)_r \cdots (b_k;q)_r}\, .
\end{equation}

\begin{proposition}
\label{2-phi-1-prop}
The following two statements are equivalent reformulations of Theorem~\ref{maintheorem}:
\begin{itemize}[leftmargin=6mm]
\item[]
%\label{f-formula}
  $$
\begin{aligned}
  (a)\quad
  f_r(W;q,t)
\ =\
\tfrac{1}{|W|} 
\sum_{w \in W} \frac{\det(1+tw)}{\det(1-qw)}
\ =\ t^r \qbin{n}{r}{q^a} 
               \frac{ (-tq^{e_1};q^a )_{n-r}  (-qt^{-1};q^a)_r }
               { (q^{e_1+1};q^a)_n}\, .
               \hspace{20ex}
\end{aligned}
  $$
\item[]                     
  % \label{f-polynomial-formula}
  $$
  \begin{aligned}
  \ (b)\quad
  \sum_{r=0}^n s^r 
f_r(W;q,t)
&\ =\ 
\tfrac{1}{|W|} 
\sum_{w \in W} \frac{\det(1+sw^{-1})\det(1+tw)}{\det(1-qw)}\\
&\ =\ \frac{(-tq^{e_1};q^a)_n} {(q^{e_1+1};q^a)_n} \quad
{}_2 \phi_1 \left[ \left. 
\begin{matrix} 
q^{-an} &  -qt^{-1} \\                              
  & -q^{a(1-n)-e_1}t^{-1} 
\end{matrix}\ \right|\, q^a; -sq^{a-e_1} \right] \, .
\hspace{20ex}
  \end{aligned}
  $$  
\end{itemize}

\end{proposition}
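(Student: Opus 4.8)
The plan is to observe that in each of (a) and (b) the \emph{first} displayed equality is merely a restatement of Theorem~\ref{maintheorem}, so that all the genuine content sits in the \emph{second} equalities, which are purely formal identities among $q$-Pochhammer symbols (they hold for any $e_1$ and any gap $a$, with no reference to the group $W$). Indeed, reading the Molien average in (a) as the coefficient of $s^r$ in~\eqref{molien3-var}---so that it carries the character $\cchi_{\wedge^r V}(w^{-1})$ of $(\wedge^r V)^*$---that first equality is exactly Theorem~\ref{maintheorem}${}^\prime$ together with the definition of $f_r(W;q,t)$ in~\eqref{analogues}; likewise the first equality of (b) is the full Theorem~\ref{maintheorem}. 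I would therefore concentrate on the two remaining identities.

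For (a) I would cancel the common factors $\qbin{n}{r}{q^a}$, $(-tq^{e_1};q^a)_{n-r}$, and $(q^{e_1+1};q^a)_n$ from the defining expression for $f_r(W;q,t)$ and from the claimed right-hand side, reducing the assertion to
\[
q^{\,r+a\binom{r}{2}}\,(-tq^{-1};q^{-a})_r \;=\; t^r\,(-qt^{-1};q^a)_r .
\]
This follows by writing $(-tq^{-1};q^{-a})_r=\prod_{k=0}^{r-1}\bigl(1+tq^{-1-ak}\bigr)$ and pulling the monomial $tq^{-1-ak}$ out of the $k$-th factor, which rewrites it as $tq^{-1-ak}\bigl(1+q^{\,1+ak}t^{-1}\bigr)$; the extracted scalar is $\prod_{k=0}^{r-1}tq^{-1-ak}=t^r q^{-r-a\binom{r}{2}}$ (using $\sum_{k=0}^{r-1}(1+ak)=r+a\binom{r}{2}$), and the surviving product is precisely $(-qt^{-1};q^a)_r$.

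For (b) the approach is to start from $\sum_{r=0}^n s^r f_r(W;q,t)$ with $f_r$ in the form furnished by (a), and to coerce the summand into the standard basic-hypergeometric shape $z^r\,(A;q^a)_r(B;q^a)_r/\bigl[(q^a;q^a)_r(C;q^a)_r\bigr]$ of~\eqref{basic-hypergeometric-definition}. Three classical manipulations from~\cite{GasperRahman} accomplish this: (i) the splitting $(-tq^{e_1};q^a)_{n-r}=(-tq^{e_1};q^a)_n\big/(-tq^{e_1+a(n-r)};q^a)_r$, which releases the $r$-free prefactor $(-tq^{e_1};q^a)_n/(q^{e_1+1};q^a)_n$; (ii) the reflection $(x;p^{-1})_r=(-x)^r p^{-\binom{r}{2}}(x^{-1};p)_r$, applied after recognizing $(-tq^{e_1+a(n-r)};q^a)_r=(-tq^{e_1+a(n-1)};q^{-a})_r$, which replaces the $r$-dependent base point by the fixed denominator parameter $C=-q^{a(1-n)-e_1}t^{-1}$ at the cost of the scalar $t^r q^{\,r(e_1+a(n-1))}q^{-a\binom{r}{2}}$; and (iii) the negative-argument form $\qbin{n}{r}{q^a}=(-1)^r q^{\,anr-a\binom{r}{2}}(q^{-an};q^a)_r/(q^a;q^a)_r$, which supplies the numerator parameter $A=q^{-an}$. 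With $B=-qt^{-1}$ already present from (a) and $z=-sq^{a-e_1}$, the ${}_2\phi_1$ is then assembled.

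The step I expect to be the main obstacle is the exponent bookkeeping in (b): after (i)--(iii) one must verify that every stray power of $q$, every sign, and the factors $t^{\pm r}$ collapse cleanly into the single monomial $z^r=(-sq^{a-e_1})^r$. The $t^r$ produced by (ii) cancels the $t^r$ carried by $(st)^r$; the two $\binom{r}{2}$-contributions cancel; and the remaining $q$-exponent $-r\bigl(e_1+a(n-1)\bigr)+\bigl(anr-a\binom{r}{2}\bigr)+a\binom{r}{2}$ telescopes, since the $\pm anr$ terms cancel, to $r(a-e_1)$, which together with the sign $(-1)^r$ from (iii) gives exactly $(-sq^{a-e_1})^r$. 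Once this collapse is confirmed, the summand is visibly the $r$-th term of the asserted ${}_2\phi_1$, finishing (b) and, with (a), the proposition.
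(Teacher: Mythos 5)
Your proposal is correct and follows essentially the same route as the paper: part (a) is the reflection identity $(z;q^{-1})_r = z^r q^{-\binom{r}{2}}(-z^{-1};q)_r$ applied to $(-tq^{-1};q^{-a})_r$, and part (b) is the coercion of the sum into ${}_2\phi_1$ form, where the paper cites the single packaged identity \cite[(I.11)]{GasperRahman} while you re-derive it via the three elementary Pochhammer manipulations (splitting, reversal/reflection, and the negative-argument form of the $q$-binomial) — your exponent bookkeeping collapsing to $(-sq^{a-e_1})^r$ checks out. Your reading of the Molien sums in the first equalities as carrying the character of $(\wedge^r V)^*$ (i.e., as the coefficient of $s^r$ in~\eqref{molien3-var}) is the correct interpretation and is in fact more careful than the paper, which leaves those equalities implicit.
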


\begin{proof}
For Equation~(a), we simply rewrite Theorem~\ref{maintheorem}${}^\prime$
with 
$$
f_r(W;q,t)
=q^{r+a\binom{r}{2}} 
   \qbin{n}{r}{q^a}  
    \frac{(-tq^{e_1};q^a)_{n-r}  (-tq^{-1};q^{-a})_{r}}
               {(q^{e_1+1};q^a)_n}
=t^r \qbin{n}{r}{q^a} 
               \frac{ (-tq^{e_1};q^a )_{n-r}  (-qt^{-1};q^a)_r }
                      { (q^{e_1+1};q^a)_n}
$$
by applying this easy fact (see~\cite[eqn.~(I.3)]{GasperRahman}) to
the numerator factor $(-tq^{-1};q^{-a})_{r}$:
\begin{equation}
\label{GR-eqnI.3}
(z;q^{-1})_{r}=  z^r \ q^{-\binom{r}{2}}\ (-z^{-1};q)_r\, .
\end{equation} 
To see (b), we reexpress (a) as
$$
f_r(W;q,t)=t^r\ \frac{(q^a;q^a)_n }{(q^{e_1+1};q^a)_n}  \
  \frac{ (-tq^{e_1};q^a)_{n-r} } { (q^a;q^a)_{n-r} } \   \frac{ (-qt^{-1};q^a)_{r} }{(q^a;q^a)_{r}}\, .
$$
  We rewrite the middle quotient in the product by applying the identity
  (see~\cite[(I.11)]{GasperRahman})
$$
\frac{(w;q)_{n-r}}{(z;q)_{n-r}}
= \frac{(w;q)_n}{(z;q)_n} \
   \frac{(q^{1-n}/z;q)_r}{(q^{1-n}/w;q)_r} \
   \left(\frac{z}{w}\right)^r
$$
with $w=-tq^{e_1}$, $z=q^a$ and obtain
$$
f_r(W;q,t)=
\frac{(-tq^{e_1};q^a)_n} {(q^{e_1+1};q^a)_n} \
 (-q^{a-e_1})^r \
\frac{ ( q^{-an} ;q^a)_r }{ (q^a ;q^a)_r}\
 \frac{( -qt^{-1};q^a)_r }{ (-q^{a(1-n)-e_1} t^{-1};q^a)_r }
\, ,
$$
which then immediately gives (b) from definition~\eqref{basic-hypergeometric-definition} of ${}_{2}\phi_1$.
\end{proof}

We can now extend the relation between $f$- and $h$-vectors for $n$-dimensional simple polytopes,
namely,
\begin{equation}
\sum_{r=0}^n s^r f_r  \
=\ \sum_{r=0}^n (1+s)^r \ h_r\, ,
\end{equation}
to a relation between the above $(q,t)$-analogues; we prove the theorem from the Introduction:

\vskip.1in
\noindent
{\bf Theorem~\ref{h-to-f-theorem}.}
{\it
For any coincidental reflection group $W$ with exponent gap $a$,
$$
\sum_{r=0}^n s^r f_r(W;q,t)  
\ =\ \sum_{r=0}^n\, (-sq;q^a)_r \cdot h_r(W;q,t)\, .
$$
}
\begin{proof}
We apply a terminating form of {\it Jackson's ${}_2 \phi_1$-transformation}\footnote{The authors thank Dennis Stanton for pointing them to this identity.}~\cite[III.7]{GasperRahman},
\begin{equation}                                                                
\label{Jackson-transformation}                                                  
{}_2 \phi_1 \left[ \left. 
\begin{matrix} q^{-n} &  b \\
                    & c \end{matrix}\ \right|\, q, z \right]  
\ =\   \frac{(c/b;q)_n}{(c;q)_n}
\ \ {}_3 \phi_2 \left[ \left. 
\begin{matrix} q^{-n} &  b & bzq^{-n}/c\\
                    & bq^{1-n}/c& 0 \end{matrix}\ \right|\, q, q\, \right]\, ,
\end{equation}
to the ${}_2\phi_1$-expression in Proposition~\ref{2-phi-1-prop}(b)
after first replacing $q$ by $q^a$ and then 
selecting
parameter choices
$$
b=-qt^{-1}, \quad c=-q^{a(1-n)-e_1}t^{-1}, \quad z=-sq^{a-e_1}\, .
$$
Since
$$
c/b = q^{a(1-n)-e_1-1},
\quad
bq^{1-n}/c=q^{e_1+1},
\quad
bzq^{-1}/c=-sq,
$$
and
$(z;q)_n=(q^{1-n}/z;q)_n (-z)^n q^{\binom{n}{2}}$
(see~\cite[eqn.~(I.7)]{GasperRahman}), 
one can rewrite the quotient
in~(\ref{Jackson-transformation}) as                                                  
$$
\frac{(c/b;q)_n}{(c;q)_n}
\ =\ \frac{(q^{a(1-n)-e_1-1};a^a)_n}{(-t^{-1}q^{a(1-n)-e_1};q^a)_n} 
\ = \ \frac{(q^{e_1+1};q^a)_n}{(-tq^{e_1};q^a)_n} (-tq^{-1})^n\, .
$$
Thus Proposition~\ref{2-phi-1-prop}(b) and \eqref{Jackson-transformation} imply
that
$$
\begin{aligned}
\sum_{r=0}^n s^r 
f_r(W;q,t)
%&=\frac{(-tq^{e_1};q^a)_n} {(q^{e_1+1};q^a)_n} \cdot
%{}_2 \phi_1 \left[ \left. 
%\begin{matrix} 
%q^{-an} &  -qt^{-1} \\                              
%  & -q^{a(1-n)-e_1}t^{-1} 
%\end{matrix} \right| q^a, -sq^{a-e_1} \right]  \\
&\ =\ (-tq^{-1})^n \cdot
{}_3 \phi_2 \left[ \left. 
\begin{matrix} q^{-an} &  -qt^{-1} & -sq\\
                    & q^{e_1+1} & 0 \end{matrix}\ \right|\, q^a, q^a \right]\\
&\ =\ (-tq^{-1})^n\
\sum_{r=0}^n (-sq;q^a)_r \cdot q^{ar} \
    \frac{(q^{-an};q^a)_r\ (-qt^{-1};q^a)_r}
         {(q^a;q^a)_r\ (q^{e_1+1};q^a)_r}\, .  
\end{aligned}
$$
It remains to check that
\begin{equation}
\label{h-re-expression}
(-tq^{-ar-1})^{n-r}\  \qbin{n}{r}{q^a}\
                    \frac{ (-tq^{-1};q^{-a})_r } { (q^{e_1+1};q^a)_r }
\ =\ (-tq^{-1})^n \ q^{ar} \ 
    \frac{(q^{-an};q^a)_r \ (-qt^{-1};q^a)_r}
         {(q^a;q^a)_r \ (q^{e_1+1};q^a)_r}\, .
\end{equation}
We substitute definition~\eqref{q-binomial-definition} 
of the $q$-binomial
and
cancel common factors to see that~\eqref{h-re-expression}
is equivalent to
$$
(-tq^{-ar-1})^{n-r} \ \frac{ (q^a;q^a)_n }{ (q^a;q^a)_{n-r}} \ (-tq^{-1};q^{-a})_r
\ =\
(-tq^{-1})^n \ q^{ar} \ (q^{-an};q^a)_r \ (-qt^{-1};q^a)_r \, .
$$
We verify this last equality by applying~\eqref{GR-eqnI.3} to the factor $(-tq^{-1};q^{-a})_r$ on the 
left and rewriting the factor $(q^{-an};q^a)_r$ on the right using this
fact (from~\cite[I.12]{GasperRahman}) with $q$ replaced by $q^a$:
\begin{equation}
\label{GR-eqnI.12}
(q^{-n};q)_r\ =\ \frac{(q;q)_n}{(q;q)_{n-r}} \ (-1)^r\ q^{\binom{r}{2}-nr}. \qedhere
\end{equation}
\end{proof}

%%%%%%%%%%%%%%%%%%%%%%%%%%%%%%%%%%%%%%%%%%%
\section{The connection with Catalan, Kirkman, Narayana, Cambrian
and clusters}
\label{parking-section}

We explain here how specializations of 
our product formulas for $f_r(W;q,t)$ and $h_r(W;q,t)$ give 
known product formulas for $q$-Catalan, 
$q$-Kirkman, and $q$-Narayana numbers. We also see how
their $q=1$ specializations give the $f$-vectors
and $h$-vectors for Cambrian and cluster fans.
This starts with certain graded representations of a reflection group
$W$ called {\it graded parking spaces}.

\subsection*{The graded parking spaces}
For a positive integer $p$, define a 
class function $\chi^{(p)}: W \longrightarrow \QQ(q)$ by
$$
\chi^{(p)}(w):=\frac{\det(1-q^pw)}{\det(1-qw)}\, .
$$
For special values of $p$, the function $\chi^{(p)}$ 
is actually a $\ZZ[q]$-valued class function and even turns out to
be the graded character of a genuine $W$-representation.
Ito and Okada~\cite{ItoOkada} tabulated
the values of $p$ for which this holds for each 
irreducible complex reflection group.
For duality groups $W$, these special values of $p$
include all the {\it Fuss-Catalan} cases, that is, 
cases where $p \equiv 1 \bmod{h}$ for $h=d_n=\max\{d_i\}$:
\begin{itemize}
\item This fact is related to work of Haiman~\cite[\S 7]{Haiman} for Weyl groups, 
where  $\chi^{(p)}$ gives a graded version
of the {\it $p$-parking space} $W$-permutation representation on $R/pR$,
in which $R$ is the root lattice for $W$.  
\item 
It holds more generally for real reflection groups $W$ 
via results from representation theory of 
{\it rational Cherednik algebras}, e.g., see~\cite[Remark~4.4]{BessisReiner};
Gordon and Griffeth~\cite[\S 1.6]{GordonGriffeth}
generalize these ideas to all complex reflection groups.
\item  
One may verify this fact from Ito and Okada's 
tabulation~\cite[Table~1]{ItoOkada}, where it only fails for the four 
{\it non-duality} groups $G_{12}$, $G_{13}$, $G_{22}$, and $G_{31}$.
\end{itemize}

\subsection*{The $q$-Kirkman numbers}
In the cases where $\chi^{(p)}$ is a genuine graded character, 
setting $t=-q^p$ in~Proposition~\ref{2-phi-1-prop}(a) gives
an expression for the (graded) multiplicity of the $W$-irreducibles 
$\wedge^r V$ in  $\chi^{(p)}$:
\begin{equation}
\label{coincidental-q-Kirkman-formula}
f_r(W;q,-q^p) 
=\left\langle \chi^{(p)}, \wedge^r(V) \right\rangle_W  \\
=(-1)^r \ q^{pr}\ \qbin{n}{r}{q^a} 
               \frac{ (q^{p+e_1};q^a )_{n-r}  (q^{p-1};q^{-a})_r }
                      { (q^{e_1+1};q^a)_n}\, .
\end{equation}
For real reflection groups $W$, these graded multiplicities are
called {\it $q$-Kirkman numbers}; see~\cite[\S9, \S11]{ArmstrongReinerRhoades}.
Specializing \eqref{coincidental-q-Kirkman-formula} 
to the case of types $A$ (where $e_1=1=a$) and $B/C$ (where $e_1=1, a=2$)
gives 
$$
\begin{array}{rclll}
f_r(A_{n-1};q,-q^p)&=&q^{\binom{r+1}{2}}\  \displaystyle\frac{1}{[p]_q} \
\qbin{n-1}{r}{q} \ \qbin{p+n-r-1}{n}{q} &\text{ for }\gcd(n,p)=1
\text{ and}\\
& & & & \\
f_r(B_n/C_n;q,-q^p)&=&q^{r^2}\ \qbin{\frac{p-1}{2}}{r}{q^2} \ \qbin{\frac{p-1}{2}+n-r}{n-r}{q} &\text{ for }p\text{ odd}.
\end{array}
$$
The special case $p=h+1$ was listed in~\cite{ArmstrongReinerRhoades}.
In verifying that \eqref{coincidental-q-Kirkman-formula}
specializes to these two formulas,
it is helpful to note that $p \equiv 1 \bmod{a}$ in these cases, and
so one can again use \eqref{GR-eqnI.12} to 
rewrite \eqref{coincidental-q-Kirkman-formula}  as 
$$
f_r(W;q,-q^p) 
=q^{r+a\binom{r}{2}}\  \qbin{n}{r}{q^a} \
   \frac{ (q^{p+e_1};q^a )_{n-r} \ (q^a;q^a)_{\frac{p-1}{a}} }
                      { (q^{e_1+1};q^a)_n \ (q^a;q^a)_{\frac{p-1}{a}-r}}\, .
$$

%%%%%%%%
\subsection*{The $q$-Catalans and $q$-Narayanas}
\label{Narayana-subsection}

Setting $s=0$ in Theorem~\ref{h-to-f-theorem} 
gives the identity
\begin{equation}
\label{q-Cat-is-sum-of-q-Narayanas}
\frac{(-tq^{e_1};q^a)}{(q^{e_1+1};q^a)_n}
\ =\ f_0(W;q,t)
\ =\ \sum_{r=0}^n h_r(W;q,t)\, .
\end{equation}
%\label{graded-parking-space-character}
In the cases where $\chi^{(p)}$ is a genuine graded character, 
setting $t=-q^p$ in the left side of \eqref{q-Cat-is-sum-of-q-Narayanas}
gives what one might call the 
{\it $p$-rational version of the $q$-Catalan number} for $W$:
$$
\frac{(q^{p+e_1};q^a)}{(q^{1+e_1};q^a)_n}
\ =\ \prod_{i=1}^n \frac{ 1-q^{p+e_i} }{ 1-q^{1+e_i}}\
=:\ \Cat^{(p)}(W,q).
$$
Here, the first equality assumes $W$ is coincidental 
with smallest exponent $e_1$ and exponent gap $q$.  When
$p =h+1$, this product is called the {\it $q$-Catalan number for $W$};
see, e.g. Armstrong~\cite{Armstrong}.
Thus setting $t=-q^p$ in \eqref{q-Cat-is-sum-of-q-Narayanas} gives
a summation formula for coincidental reflection groups $W$:
$$
\Cat^{(p)}(W,q)
\ =\
\frac{(q^{p+e_1};q^a)}{(q^{e_1+1};q^a)_n}
\ =\ \sum_{r=0}^n h_r(W;q,-q^p),
$$
where each summand has the explicit product formula 
\begin{equation}
\label{coincidental-q-Narayana-formula}
h_r(W;q,-q^p)\ =\ (q^{p-ar-1})^{n-r} \qbin{n}{r}{q^a}
                    \frac{ (q^{p-1};q^{-a})_r } { (q^{e_1+1};q^a)_r }\, .
\end{equation}
Happily, \eqref{coincidental-q-Narayana-formula}
agrees with the type $A$ and $B/C$ formulas for
the {\it $q$-Narayana numbers} computed in~\cite{ReinerSommers}:
\begin{equation}
\label{typeABC-q-Narayana-formulas}
\begin{array}{rclll}
h_r(A_{n-1};q,-q^p)&=&q^{(n-1-r)(p-1-r)}\ \displaystyle\frac{1}{[r+1]_q}\ 
                       \qbin{n-1}{r}{q}\ \qbin{p-1}{r}{q} &\text{ for }\gcd(n,p)=1,\\
h_r(B_n/C_n;q,-q^p)&=&(q^2)^{(n-r)(\frac{p-1}{2}-r)}\ 
\qbin{n}{r}{q^2}\ \qbin{\frac{p-1}{2}}{r}{q^2} &\text{ for }p\text{ odd}.
\rule{0ex}{5ex}
\end{array}
\end{equation}
In verifying this, it is helpful to again use \eqref{GR-eqnI.3}
to replace $(q^{p-1};q^{-a})_r$ in \eqref{coincidental-q-Narayana-formula}
with a multiple of $(q^{1-p};q^a)_r$ and then to further
use that $p \equiv 1 \bmod{a}$ in these cases and
employ \eqref{GR-eqnI.12} again, rewriting 
\eqref{coincidental-q-Narayana-formula} as 
$$
h_r(W;q,-q^p) 
\ =\ q^{(n-r)(p-ar-1)}  \qbin{n}{r}{q^a} \
   \frac{ (q^a;q^a)_{\frac{p-1}{a}} }
                      { (q^{e_1+1};q^a)_n \ (q^a;q^a)_{\frac{p-1}{a}-r}}\ .
$$

\begin{remark}
We say ``happily'' above because the 
formulas in \eqref{typeABC-q-Narayana-formulas} came from a subtle and general 
{\it Weyl group} construction that arose from work of the third author~\cite{Sommers}, 
described in~\cite{ReinerSommers}, that always produces $q$-Narayana numbers
summing to $q$-Catalan numbers.  
But there was nothing, {\it a priori}, indicating that 
they must {\it coincide} with the
values $h_r(W;q,-q^p)$ arising naturally here for each coincidental
reflection group $W$.
\end{remark}
\begin{remark}
\label{answerquestion}
  For the coincidental types, setting $t=-q^p$ in Theorem~\ref{h-to-f-theorem}
  relates the $q$-Kirkman numbers $f_r(W;q,-q^p)$ to the
  $q$-Narayana numbers $h_r(W;q,-q^p)$,
  as asked for in~\cite[Problem~11.3]{ArmstrongReinerRhoades}.
\end{remark}

%%%%%
\subsection*{The $f$-vector and $h$-vector}
\label{cluster-complex-section}
Further specializing to $t=-q^{h+1}$ (so $p=h+1$)
and taking $q \rightarrow 1$
in \eqref{analogues}
produces integers 
for a {\em real} reflection group $W$
\begin{align}
\label{Cambrian-f-vector}
  f_r&:=\left[ f_r(W;q,-q^{h+1}) \right]_{q=1} \text{ and }\\
  \label{Cambrian-h-vector}
h_r&:=\left[ h_r(W;q,-q^{h+1}) \right]_{q=1}
\end{align}
that were observed in~\cite[\S 3.3]{ArmstrongReinerRhoades}  
to be  the {\it $f$-vector} and {\it $h$-vector}, respectively,
for the (finite type) {\it cluster complexes} of Fomin and Zelevinsky (in the Weyl group case) and {\it Cambrian fans} of 
Reading (for arbitrary real reflection groups).   
Specifically, $f_r$ counts the number of clusters of cardinality $n-r$,
or the number of cones in the fan having dimension $n-r$.
Thus Theorem~\ref{h-to-f-theorem} specializes in this instance
to the usual $h$-vector-to-$f$-vector relationship for the
simplicial spheres associated to these fans, or to the simple polytopes
for which they are the normal fans, constructed by
Hohlwedg, Lange, and Thomas \cite{HohlwegLangeThomas}.
This again answers 
the second part of the
question raised in~\cite[Problem~11.3]{ArmstrongReinerRhoades}
for (real) coincidental groups $W$.

%%%%%%%%%%%%%%%%%%%%%%%
% Vic removed these next things that he had written because
% they weren't quite right.  The subtle error comes from
% the lack of symmetry in the h-vector for these
% (Cohen-Macaulay, but not spherical) m-cluster complexes, and is related
% to the discussion in the Parking Spaces paper with Armstrong and Rhoades
% on page 699, footnote 18.
\begin{comment}
If instead of $p=h+1$ one takes the more general
{\it Fuss-Catalan} specialization $p=mh+1$ for $m \geq 0$, then
$$
\begin{aligned}
f_r:=\left[ f_r(W;q,q^{mh+1}) \right]_{q=1},\\
h_r:=\left[ h_r(W;q,q^{mh+1}) \right]_{q=1},
\end{aligned}
$$
still give product formulas for 
the $f$- and $h$-vectors of interesting simplicial
complexes:  the {\it $m$-cluster complexes and Cambrian complexes} of Fomin
and Reading~\cite[]{FominReading} {\color{red}(Is this true???)}.  Although no longer spherical in general, 
the $m$-cluster complexes were proven 
{\it shellable} by Athanasiadis and Tzanaki~\cite{AthanasiadisTzanaki}, hence homotopy equivalent to bouquets
of spheres, and {\it Cohen-Macaulay}.  This explains in particular
why their $h$-vectors are nonnegative.
\end{comment}
%%%%%%%%%%%%%%%%%%%%%%

\begin{remark}
  Theorem~\ref{maintheorem} explains
  a mysterious product formula observed by Fomin and Reading
  ~\cite[Thm.~8.5 at $m=1$]{FominReading} 
  for the number of $r$-dimensional cones in the cluster/Cambrian fan
  for real coincidental reflection groups $W$:
  $$
   f_r=\binom{n}{r}\prod_{i=1}^{n-r} \frac{h+d_i}{d_i}\, .
  $$
The formula follows from computing~\eqref{Cambrian-f-vector}
by setting $t=-q^{h+1}$ and then $q=1$ in Theorem~\ref{maintheorem}:
    $$
    f_r=\left[ f_r(W;q,-q^{h+1}) \right]_{q=1} =
      \binom{n}{r} \ \frac{\prod_{i=1}^r (h+1-e^*_i) \cdot \prod_{i=1}^{n-r} (h+1+e_i)}
      {\prod_{i=1}^{n} d_i}\
      =\binom{n}{r}\prod_{i=1}^{n-r} \frac{h+d_i}{d_i}
      $$
      where the last equality uses the fact that $d_i=e_i+1$ and that $e_i^*=h-e_{n+1-i}$ for real reflection groups $W$. 
\end{remark}

%%%%%%%%%%%%%%%%%%%%%%%%%%%%%%%%%%%%%%%%%%%%%%%%%%%%%%%%%%%%

%%%%%%%%%%%%%%%%%%%%%%%%%%
\section{Data on the non-coincidental exceptional groups}
\label{data-section}
%%%%%%%%%%%%%%%%%%%%%%%%%%
For the {\it non-coincidental} exceptional irreducible reflection groups $W$,
we tabulate here the polynomials 
$$
\nu_r(W,q,t):=
\frac{\Hilb\left((S(V^*)\otimes \wedge V^* \otimes \wedge^r V)^W,q,t \right)}{\Hilb(S(V^*)^W,q)}
$$
for $r=0,1,2,\ldots,n$.  The results of
Section~\ref{G(d,1,n)-section},
including Corollary~\ref{type-A-hook-content-formula},
\ref{mainthmforG(d,1,n)}, \ref{Koike-theorem-wedge-corollary},
give the same data for the Weyl groups of type $A$ and
the infinite family $G(de,e,n)$ of complex reflection groups.
Thus, together with Theorem~\ref{maintheorem}, this completes those calculations
for {\it all} irreducible complex reflection groups.  Also, together
with Remark~\ref{G(de,e,n)-answer-comparison-remark}, it allows
one to check that the answers in the non-coincidental cases
always differ from what Theorem~\ref{maintheorem} would have predicted.

In this tabulation, we may assume without loss of generality that
$1 \leq r \leq n-1$, since 
\begin{align}
\label{Solomon-product}
    \nu_0(W,q,t)&=\prod_{i=1}^n (1+tq^{e_i})\quad\text{ and }\\
\label{consequence-of-Shepler-product}
     \nu_n(W,q,t)&=\prod_{i=1}^n (q^{e_i^*}+t)
\end{align}
for all reflection groups by Solomon's Theorem~\cite{Solomon} and
Theorem~\ref{Shepler-product}, respectively. Additionally,
for duality groups $W$, we may assume $2 \leq r \leq n-1$, since
equation~\eqref{Reiner-Shepler}  (see \cite[eqn.~(2.1)]{ReinerShepler})
implies that
\begin{equation}
  \label{Reiner-Shepler-restated}
\nu_1(W,q,t)=\left(\sum_{i=1}^n q^{e_i} \right) (1+tq^{-1}) \prod_{i=1}^{n-1} (1+tq^{e_i})
\, .
\end{equation}
In general, we will use the notation $[m]_q:=1+q+q^2+\cdots+q^{m-1}$.

\subsection*{Rank $2$ groups}

For rank $2$ complex reflection groups,
% with exponents $(e_1,e_2)$ and coexponents $(e_1^*,e_2^*)$, the following
one has a formula for $\nu_1(W,q,t)$ from \cite[Cor.~10.2]{ReinerShepler}
$$
\nu_1(W,q,t)=(1+tq^{-1})\big( (q^{e^*_1}+q^{e^*_2}) + t(q^{e_1+1}+q^{e_2+1}) \big).
$$
It was noted there that this agrees with
equation \eqref{Reiner-Shepler-restated}, and hence also with Theorem~\ref{maintheorem},
%$$(1+tq^{-1})(q^{e_1^*}+q^{e_2^*})(1+q^{e_1}t)=(1+tq^{-1})\big( (q^{e^*_1}+q^{e^*_2}) + t(q^{e_1+1}+q^{e_1+e_2^*}) \big),$$
exactly when $W$ is a rank $2$ duality group, or equivalently, a rank $2$ coincidental group.

%%%%%%%
\subsection*{Real  but non-coincidental reflection groups of rank at least $3$}

For real reflection groups, we may assume that $2 \leq r \leq \lfloor \frac{n}{2} \rfloor$,
 as they are all duality groups and additionally satisfy
 (see~\cite[Prop.~13.1]{ReinerShepler})
 $$\nu_{n-r}(W,q,t)= t^n\ \nu_{r}(W,q,t^{-1})\, .$$
 
\begin{center}
\begin{tabular}{|c|c|} \hline
  $F_4$ \rule[-1ex]{0ex}{3.5ex}
  &  exponents (1,5,7,11)
  \\\hline\hline
  $\nu_2$ &$(q + t)[2]_{q^4} (1 + t q)   \cdot$
    \rule{0ex}{2.5ex}
\\
  &$\left( (q^5+q^7-q^9+q^{11}+q^{13})(1+t^2)
  + ( 1+q^6+q^8+q^{10}+q^{12}+q^{18})t \right)$
  \rule{0ex}{2.5ex}
  \\ \hline
\end{tabular}
\end{center}

\begin{center}
\begin{tabular}{|c|c|} \hline
  $H_4$\rule[-1ex]{0ex}{3.5ex}
  &  exponents (1,11,19,29) \\\hline\hline
  $\nu_2$ & $(q + t)(1 + t q)   \cdot$
  \rule{0ex}{2.5ex}
  \\
  &$\big(
  ( q^{11} + q^{19} + 2q^{29} + q^{39} + q^{47})(1+t^2)$\\
  &$+ ( 1 + q^{10} + q^{18} + q^{20} + q^{22} + q^{28} + q^{30} + q^{36} + q^{38} + q^{40} + q^{48} + q^{58})t
  \big)$
  \\ \hline
\end{tabular}
\end{center}

\begin{center}
\begin{tabular}{|c|c|} \hline
  $E_6$\rule[-1ex]{0ex}{3.5ex}
  &  exponents (1,4,5,7,8,11) \\\hline\hline
  $\nu_2$ \rule[-2ex]{0ex}{5ex}
  & $(q+t) [3]_{q^3}  (1+q+q^4+q^7+q^8) \prod_{i=1}^3 (1+tq^{e_i}) \cdot$
  \rule{0ex}{3ex}
  \\
  &$ \big( q^4+t\frac{[2]_{q^5}[2]_{q^7}}{[2]_{q}}+t^2 q^7 \big)$
  \rule[-2ex]{0ex}{2ex}
  \\ \hline
  $\nu_3$         \rule[-2ex]{0ex}{5ex}
  &  $[2]_{q^4} [5]_q  \prod_{i=1}^2 (1+tq^{e_i})  \prod_{i=1}^2 (q^{e_i}+t)\cdot$ \\
  & $ \big( q^5(1+t^2)(1-q+q^2+q^3-2q^4+q^5+q^6-q^7+q^8)  $\\
  & $ + t [2]_{q^2}(1-q-q^2+2q^3-q^5+q^6+q^{10}-q^{11}+2q^{13}-q^{14}-q^{15}+q^{16}) \big)$
  \rule[-1.5ex]{0ex}{1ex}
  \\\hline 
 \end{tabular}
 \end{center}
  
\begin{center}
  \begin{tabular}{|c|c|} \hline
    $E_7$\rule[-1ex]{0ex}{3.5ex}
    &   exponents (1,5,7,9,11,13,17) \\\hline\hline
    $\nu_2$ & $(q+t) [3]_{q^6} [7]_{q^2}   \prod_{i=1}^4 (1+tq^{e_i}) \cdot$
    \rule{0ex}{2.5ex}
    \\
    &$ \big( q^5+t\frac{[2]_{q^8}[10]_{q^2}}{[2]_{q^2}}+t^2 q^{11} \big)$
        \rule[-2ex]{0ex}{6ex}\\ \hline
        $\nu_3$ &  $[5]_{q^2} [7]_{q^2}   \prod_{i=1}^3 (1+tq^{e_i}) \prod_{i=1}^2 (q^{e_i}+t)\cdot$
        \rule[-1.5ex]{0ex}{4.5ex}
        \\
        &
        $ \big( (q^7+q^9t^2)\frac{[10]_{q^2}}{[2]_{q^2}} 
            + t[2]_{q^4} (1-2q^2+q^4+q^6-q^{10}+q^{14}+q^{16}-2q^{18}+q^{20})\big)$ \\\hline 
  \end{tabular}
 \end{center}
 
\begin{center}
  \begin{tabular}{|c|c|} \hline
    $E_8$  \rule[-1ex]{0ex}{3.5ex}
    &  exponents (1,7,11,13,17,19,23,29) \\\hline\hline
  $\nu_2$ \rule[-2ex]{0ex}{5ex}
    &$(q+t) (1+q^{12}) \big(-q^{14}+\sum_{i=1}^8 q^{e_i-1} \big)  \prod_{i=1}^5 (1+tq^{e_i}) \cdot$ \\
    &$ \big
    ( (1+q^{12}t^2)((q^7+q^{11})+t(1+q^{14})(1+q^{16})\big)$
    \rule[-1ex]{0ex}{1ex}\\ \hline
    $\nu_3$
    \rule[-2ex]{0ex}{5ex}
    &$[7]_{q^2} \big(\sum_{i=1}^8 q^{e_i-1} \big)   \prod_{i=1}^4 (1+tq^{e_i}) \prod_{i=1}^2 (q^{e_i}+t) \cdot$ \\
&   $\left((1+q^6t^2)(q^{11}-q^{15}+q^{17}-q^{19}+q^{23})+t(1-q^2+q^{12}+q^{28}-q^{38}+q^{40})\right)
    $
    \rule[-1ex]{0ex}{2ex}
    \\ \hline
    $\nu_4$
    \rule[0ex]{0ex}{3ex}
    & $[7]_{q^2} \prod_{i=1}^2 (1+tq^{e_i}) \prod_{i=1}^2 (q^{e_i}+t) \cdot$ \\ 
    & $\Big( (1+t^4)\cdot q^{24}(1 - q^2 + q^4 + q^6 - q^8 + q^{10} + q^{12} - 2q^{14} + 4q^{16} - 2q^{20} $
    \hspace{20ex}
    \\
   \rule[0ex]{0ex}{0ex}
   &\hspace{10ex}
   $+\, 4q^{22} - 2q^{24} + 4q^{28} - 2q^{30} + q^{32} + q^{34} - q^{36} + q^{38} + q^{40} - q^{42} + q^{44}) $ \\
      \rule[0ex]{0ex}{1ex}
      & $ + (t+t^3) \cdot q^{11} [2]_{q^6} \left( 1 - q^4 + q^6 + 3q^{12} - 2q^{14} + q^{16} + q^{18} + q^{22} + 3q^{24} - q^{26} + q^{28} + 2q^{30} \right.$
      \hspace{0ex}
      \\
      & $\left.
      \hspace{25ex}
      + 2q^{34} +  q^{36} - q^{38} + 3q^{40} + q^{42} + q^{46} + q^{48} - 2q^{50} + 3q^{52} + q^{58} - q^{60} + q^{64}\right) $ \\   \rule[0ex]{0ex}{0ex}
      &
         \rule[0ex]{0ex}{0ex}
         $\left.+t^2 \cdot (1 - q^2 + q^6 - q^8 + q^{10} + q^{12} - q^{14} + q^{16} + q^{18} - q^{20} + 3q^{22} + 2q^{24}\right.$
         \hspace{16ex}
         \\
         &$ - 2q^{26} + 5q^{28} + q^{30} + 5q^{34} + 2q^{36} - q^{38} + 9q^{40} - q^{44} + 10q^{46} - q^{48} $
                  \hspace{9ex} \\
                  &$+ 9q^{52} - q^{54} + 2q^{56} + 5q^{58} + q^{62} + 5q^{64} - 2q^{66} + 2q^{68} + 3q^{70} - q^{72}$
                           \hspace{8ex}${}_{}$\\
                           &$  + q^{74} + q^{76} - q^{78} + q^{80} + q^{82} - q^{84} + q^{86} - q^{90} + q^{92} ) \Big)$
                           \hspace{19ex}
                           \\\hline
\end{tabular}
\end{center}

%%%%%%%
\subsection*{Duality, but non-real and non-coincidental reflection groups of rank at least $3$}

\begin{center}
\begin{tabular}{|c|c|} \hline
  $G_{24}$ \rule[-1ex]{0ex}{3.5ex}
  &  exponents (3,5,13), coexponents (1,9,11) \\\hline\hline
  $\nu_2$\rule[-1.5ex]{0ex}{4.5ex}
  & $(q+t)[3]_{q^2}\big((q^6+t^2)(q^3-q^7+q^9) + t(1-q^2+q^6+q^{12})\big)$   \\ \hline
\end{tabular}
\end{center}

\begin{center}
\begin{tabular}{|c|c|} \hline
  $G_{27}$\rule[-1ex]{0ex}{3.5ex}
  &  exponents (5,11,29), coexponents (1,19,25) \\\hline\hline
  \rule[-1ex]{0ex}{3.5ex}
  $\nu_2$ &$(q + t) \big( (q^{19}+q^5 t^2)(1+q^6+q^{24})+t(1 + q^{18} + 2q^{24} + q^{30} + q^{36})\big)$   \\ \hline
\end{tabular}
\end{center}

\begin{center}
\begin{tabular}{|c|c|} \hline
  $G_{29}$ \rule[-1ex]{0ex}{3.5ex}
  &  exponents (3, 7, 11, 19), coexponents (1, 9, 13, 17) \\\hline\hline
  $\nu_2$ \rule[-2ex]{0ex}{5ex}
  &$(q + t) (1+q^3t) [3]_{q^4} \big( q^7(1+q^{12})(q^2+t^2)+t(1 - q^4 + q^8 + q^{12} + q^{16} + q^{24}) \big)$
    \\ \hline
   $\nu_3$ \rule[-2ex]{0ex}{5ex}
  &$\prod_{i=1}^2 (q^{e_i^*} + t) \cdot \big( (q^{10}+t^2)(\sum_{i=1}^4 q^{e_i^*})+t(1 + q^8)(1+q^{12}+2q^{16}) \big)$   \\ \hline
\end{tabular}
\end{center}

\begin{center}
\begin{tabular}{|c|c|} \hline
  $G_{33}$ \rule[-1ex]{0ex}{3.5ex}
  &  exponents (3, 5, 9, 11, 17), coexponents (1, 7, 9, 13, 15) \\\hline\hline
  $\nu_2$ \rule[-1.5ex]{0ex}{4.5ex}
  &$[5]_{q^{2}} (q + t) \prod_{i=1}^2 (1+q^{e_i}t)\cdot $\\
  & $\big( (q^7+q^9t^2)(1 - q^4 + q^6 + q^8 - q^{10} + q^{12}) +t(1 - q^2 + q^6 + q^{12} + q^{16} + q^{24})\big)$   \\ \hline
  $\nu_3$
  \rule[-1.5ex]{0ex}{4.5ex}
&$[5]_{q^{2}} (1+q^3t) \prod_{i=1}^2 (q^{e_i^*}+t) \cdot $\\
  & $\big( (q^9+q^5t^2)(1 - q^2 + q^4 + q^6 - q^8 + q^{12}) +t[2]_{q^2}[2]_{q^8}(1 - 2q^2 + 2q^4 - q^6 + q^{10}) \big)$
  \rule[-1.5ex]{0ex}{1ex}
  \\ \hline
  $\nu_4$
  \rule[-1.5ex]{0ex}{4.5ex}
  &$[5]_{q^{2}}  \prod_{i=1}^3 (q^{e_i^*}+t) \cdot $\\
  & $\big( (q^{13}+q^3t^2)(1 - q^4 + q^6) +t(1 - q^2 + q^6 + q^{16}))\big)$
  \rule[-1ex]{0ex}{1ex}
  \\ \hline
\end{tabular}
\end{center}

\begin{center}
\begin{tabular}{|c|c|} \hline
  $G_{34}$ \rule[-1ex]{0ex}{3.5ex}
  &  exponents (5, 11, 17, 23, 29, 41), coexponents (1, 13, 19, 25, 31, 37)\\\hline\hline
  $\nu_2$ \rule[-1.5ex]{0ex}{4.5ex}
  & $[5]_{q^6} (q + t) \prod_{i=1}^3 (1 + q^{e_i}t) \cdot$ \\
  & $\big( (q^{13} + q^{31} + q^{43})(1+q^{10}t) + t [2]_{q^{12}}( 1 - q^6 + q^{18} + q^{24} + q^{48})\big)$ \\ \hline
  $\nu_3$ \rule[-2.5ex]{0ex}{6.5ex}
  & $\qbin{5}{2}{q^6}\prod_{i=1}^2 (q^{e_i^*} + t)\prod_{i=1}^2 (1 + q^{e_i}t)\cdot$\\
  & $\big(q^{17} + q^{41}(q^2+t^2) + t(1 - q^6 + q^{18} + q^{24} + q^{36} + q^{48})\big)$ \\ \hline
  $\nu_4$ \rule[-1.5ex]{0ex}{4.5ex}
  & $[5]_{q^6}  (1 + q^5t)\ \prod_{i=1}^3 (q^{e_i^*} + t) \cdot$ \\
  & $\left( (q^{11} + q^{23} + q^{41})(q^{14}+t^2)+t(1 - q^6 + q^{12} + q^{24} + q^{30} + q^{36} + 2q^{48}) \right)$  \\ \hline
  $\nu_5$ \rule[-1.5ex]{0ex}{4.5ex}
  & $[2]_{q^{12}} \prod_{i=1}^4 (q^{e_i^*} + t) \cdot$\\
  & $\left( (q^5 + q^{11} + q^{29})(q^{26}+t^2) + t(1 + q^{18} + q^{24} + q^{36} + q^{42} + q^{48}) \right) $
  \rule[-1ex]{0ex}{1ex}
  \\ \hline
   \end{tabular}
\end{center}

%%%%%%%
\subsection*{The unique non-duality exceptional reflection group of rank at least $3$}

\begin{center}
\begin{tabular}{|c|c|} \hline
  $G_{31}$ \rule[-1ex]{0ex}{3.5ex}
  &  exponents (7, 11, 19, 23), coexponents (1, 13, 17, 29) \\\hline\hline
  $\nu_1$ \rule[-1.5ex]{0ex}{4.5ex}
  &$[2]_{q^{12}} (q + t)  \prod_{i=1}^2 (1 + q^{e_i}t) \big(1+q^{16} +t(q^{19}+q^{23})\big)$   \\ \hline
  $\nu_2$ \rule[-1.5ex]{0ex}{4.5ex}
  &$  (q + t) (1+q^7t) \Big(
   q^{13} + q^{17} + 2q^{29} + q^{41} + q^{45}$\\
   & $+t( 1 + q^{12} + q^{16} + 2q^{24} + 2q^{28} + 
  2q^{32} + q^{36} + 2q^{40})$\\
  & $+t^2(q^{11} + q^{19} + 2q^{23} + q^{27} + q^{35}) \Big)$ \\ \hline
    $\nu_3$ \rule[-1.5ex]{0ex}{4.5ex}&
    $[2]_{q^{12}} \prod_{i=1}^3 (q^{e_i^*} + t) \big(1+q^{16} +t(q^{7}+q^{11})\big)$     \\ \hline
\end{tabular}
\end{center}

\end{document}